\documentclass[12pt,reqno]{amsart}
\usepackage{amsmath,amsfonts,graphicx,amscd,amssymb,epsf,amsthm,enumerate,parskip,mathrsfs,color,ltablex,blindtext,multirow}  
 \usepackage[a4paper, margin=1in]{geometry}
\usepackage{times}
  \usepackage{adjustbox}
\theoremstyle{definition}
\newtheorem{theorem}{Theorem}[section]

\newtheorem{lemma}[theorem]{Lemma}
\newtheorem{corollary}[theorem]{Corollary}
\newtheorem{definition}{Definition}[section]
\newtheorem{remark}[definition]{Remark}
 \numberwithin{equation}{section}
\numberwithin{equation}{section}
 
\linespread{1.2}
\setlength\parskip{3pt}
\parindent=0pt
\newcommand{\di}{\displaystyle}
\newcommand{\mb}{\mathbb}
\newcommand{\hsp}{\hspace{0.3cm}}
\usepackage{hyperref}
\hypersetup{
	colorlinks=true,
	linkcolor=blue,
	citecolor=blue,
}

\begin{document}
  
\title[Upper Bound of Generalized Divisor Functions]{Superior Highly Composite Numbers and the Explicit Upper Bound of Generalized Divisor Functions}
\author{Lee-Peng Teo}
 \address{Department of Mathematics, Xiamen University Malaysia\\Jalan Sunsuria, Bandar Sunsuria, 43900, Sepang, Selangor, Malaysia.}
\email{lpteo@xmu.edu.my}
\begin{abstract}
For $k\geq 2$, we give a detailed exposition of the superior $k$-highly composite numbers. We then consider the function
\[f_k(n)=\frac{\log d_k(n)\log\log n}{\log k\log n},\quad n\geq 3\]
which has a maximum value $\lambda(k)$ at a superior $k$-highly composite number. We develop an efficient algorithm to compute $\lambda(k)$ and the positive integer $N_{\max}(k)$ where $f_k$ achieves the value $\lambda(k)$. The results for $2\leq k\leq 100$ are tabled.
 
\end{abstract}
\subjclass[2020]{11A25,11A51}
\keywords{Divisor functions, explicit upper bound, superior highly composite numbers}
\maketitle

\tableofcontents

\vfill\pagebreak
\listoftables

\vfill\pagebreak
\section{Introduction}

Given   positive integers $k$ and $n$, let $d_k(n)$ be the number of ways to write $n$ as a product of $k$ positive integers, with order taken into account. 
When $k=2$, we write $d_2(n)$ simply as $d(n)$. It is the number of positive divisors of  $n$. When $k\geq 3$, we have the relation
\[d_{k}(n)=\sum_{q|n}d_{k-1}(q).\]
The function $d_k(n)$ is multiplicative. For a prime $p$ and a positive integer $m$,
\[d_k(p^m)=\binom{k+m-1}{m}=\frac{k(k+1)\cdots (k+m-1)}{m!}.\]
It is a classical result  that \begin{equation}\label{20250731_1}\log d_k(n)=O\left(\frac{\log n}{\log\log n}\right).\end{equation}
When $k=2$, this is proved in \cite{Wigert}, cf.\ also \cite{Nicolas}. For $k>2$, \eqref{20250731_1} follows easily from Lemma \ref{20250723_1} below.

Define the function $f_k:\{n\in\mb{Z}^+\,|\,n \geq 3\}\to \mb{R}$ by
\begin{equation}\label{20250723_2}
f_k(n)=\frac{\log d_k(n) \log\log n}{\log k \log n}.\end{equation}Then $f_k(n)\geq 0$ for all $n\geq 3$. Since $d_k(n)=k$ whenever $n=p$ is a prime, we find that 
\[\liminf_{n\to\infty}f_k(n)=0.\]
On the other hand,
Eq. \eqref{20250731_1} implies that the function  $f_k(n)$  is bounded above. Therefore,
$\di \limsup_{n\to\infty}f_k(n)$ exist. 
In fact,  $f_k(n)$ has a maximum value. Namely,  
\begin{equation}\label{20250805_1}\lambda(k)=\max_{n\geq 3}f_k(n)\end{equation} exists, and  there is  a positive integer $N_{\max}(k)$ such that
\[\lambda(k)= f_k(N_{\max}(k))\geq f_k(n)\hspace{1cm}\text{for all}\;\;n\geq 3.\]

In \cite{Nicolas}, Nicolas and Robin used the concept of superior highly composite numbers introduced by Ramanujan \cite{Ramanujan} to show that  $\lambda(2)=1.5379 $ and \[N_{\max}(2)=2^5\times 3^3\times 5^2\times 7\times 9\times 11\times 13\times 17\times 19=6 983 776 800.\] In other words,
\[\log d(n)\leq\frac{1.5379 \log 2\log n}{\log\log n}\hspace{1cm}\text{for all}\;n \geq 3,\]
 and the equality is achieved when $n=6 983 776 800$.  For $k=3$,  the authors of \cite{Duras} referred to \cite{Robin} for the result $\lambda(3)=1.5914$\footnote{But this result does not seem to appear in \cite{Robin}.}, which gives
\[\log d_3(n)\leq \frac{1.5914\log 3\log n}{\log\log n}\hspace{1cm}\text{for all}\;n \geq 3.\]
The behaviors of the values of $\lambda(k)$ were studied in \cite{Duras}, especially when $k$ is large. 
To  the best of our knowledge, the precise values of $\lambda(k)$ for $k\geq 4$ have not been computed explicitly, although it was claimed in \cite{Duras} that they would appear  in a future publication. In principle, one can compute $\lambda(k)$ for any $k\geq 2$ by mimicking the algorithm for $\lambda(2)$ given in \cite{Nicolas}. But there are some auxiliary numbers used in \cite{Nicolas} whose generalization to  any $k\geq 3$ are not clear, although the authors of \cite{Duras} have attempted to tackle some of these problems.

In this work,   we resolve this problem.
As in \cite{Duras}, for any $\varepsilon>0$,   a positive integer $N_{k,\varepsilon}$ is a superior $k$-highly composite number   associated to $\varepsilon$ if 
\[\frac{d_k(n)}{n^{\varepsilon}}\leq\frac{d_k(N_{k,\varepsilon})}{N_{k,\varepsilon}^{\varepsilon}}\hspace{1cm}\text{for all}\;\;n\in\mb{Z}^+.\]
In \cite{ Duras}, it was shown that $N_{\max}(k)$, the maximizer of $f_k(n)$, must be a superior $k$-highly composite number   associated to some positive number $\varepsilon(k)$. They proved that
\[\lim_{k\to\infty}\varepsilon(k)=\frac{\log k}{\log 4},\]and
\[\lambda(k)=\frac{\log k}{\log 16}+\frac{\log\log 2+1}{\log 4}+O\left(\frac{1}{\log k}\right)\hspace{1cm}\text{when}\;\;k\to\infty.\]   In this work, we are interested in explicit values of $\lambda(k)$ and $N_{\max}(k)$ for all $k\geq 2$. For this, we justify rigorously some effectively computable bounds of $\varepsilon(k)$ for any $k\geq 2$. This will leave us with finitely many candidates for $N_{\max}(k)$. From there, we can determine $N_{\max}(k)$ and   $\lambda(k)$ explicitly.  Explicit values of  $\lambda(k)$ and $N_{\max}(k)$   for $2\leq k\leq 100$ are tabled.

	\vspace{0.5cm}
	\noindent
	{\bf Acknowledgment.}   We are grateful to the reviewer for giving us  valuable comments.

\bigskip
 \section{Preliminaries}\label{bounds}
 In this section, we present some results that we need in subsequent sections. 
 We define $d_1(n)=u(n)$ to be the unit function that is equal to 1 identically. For $k\geq 2$, the function $d_k(n)$ is defined recursively by
 \[d_k(n)=(d_{k-1}*u)(n)=\sum_{q\mid n}d_{k-1}(q).\]
It is a multiplicative function with Dirichlet series
 \[\sum_{n=1}^{\infty}\frac{d_k(n)}{n^s}=\zeta^k(s)=\prod_{p}\left(1-\frac{1}{p^s}\right)^{-k}.\]
Since
 \[(1-z)^{-k}=\frac{1}{(k-1)!}\sum_{m=0}^{\infty} (m+k-1)\cdots (m+1) z^m=\sum_{m=0}^{\infty}\binom{m+k-1}{k-1}z^m,\]
 we find that if $n=p_1^{m_1}\cdots p_r^{m_r}$ is the prime factorization of $n$, then
 \begin{equation}\label{dkn}d_k(n)=\prod_{i=1}^r \binom{m_i+k-1}{k-1}.\end{equation}
 In particular, when $p$ is prime, $d_k(p)=k$. Therefore, $d_k(n)=k$ infinitely often.
 Let $f_k(n)$ be the function defined as \[f_k(n)=\frac{\log d_k(n)\log\log n}{\log k\log n}\hspace{1cm}\text{for}\;\;n\geq 3.\] Then $f_k(n)\geq 0$ for all $n\geq 3$. For $r\geq 1$, let $p_r$ be the $r$-th prime number. Then $\di\lim_{r\to\infty}p_r=\infty$. Therefore,
\[\lim_{r\to\infty}f_k(p_r)=\lim_{k\to\infty}\frac{\log\log p_r}{\log p_r}=0.\]This show that
\[\liminf_{n\to 0}f_k(n)=0.\]

 It is obvious that if $q$ and $n$ are positive integers and $q\mid n$, then $d_k(q)\leq d_k(n)$.
 The following lemma gives a simple inequality comparing $d_k(n)$ with $d(n)$. 
 \begin{lemma}\label{20250723_1}
 For $k\geq 2$,
 \begin{equation}\label{20250802_3}d_k(n)\leq d(n)^{k-1}.\end{equation}
 \end{lemma}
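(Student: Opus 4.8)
The plan is to exploit the fact that both sides of \eqref{20250802_3} are multiplicative, so that the inequality need only be verified on prime powers. Writing the prime factorization $n=p_1^{m_1}\cdots p_r^{m_r}$, the formula $d_k(n)=\prod_{i=1}^r\binom{m_i+k-1}{k-1}$ recorded above, together with $d(n)=\prod_{i=1}^r(m_i+1)$, reduces the claim to showing, for every integer $m\geq 0$ and every $k\geq 2$,
\[\binom{m+k-1}{k-1}\leq (m+1)^{k-1}.\]
Once this single-variable inequality is established, taking the product over $i=1,\dots,r$ recovers \eqref{20250802_3} immediately.

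To prove the reduced inequality, I would factor the binomial coefficient as a telescoping-style product,
\[\binom{m+k-1}{k-1}=\frac{(m+1)(m+2)\cdots(m+k-1)}{(k-1)!}=\prod_{j=1}^{k-1}\frac{m+j}{j},\]
so that the left-hand side becomes a product of exactly $k-1$ factors, matching the $k-1$ copies of $(m+1)$ on the right. It then suffices to compare the factors one at a time and show $\di\frac{m+j}{j}\leq m+1$ for each $j$ with $1\leq j\leq k-1$. Clearing denominators, this is equivalent to $m+j\leq j(m+1)$, i.e. $m(j-1)\geq 0$, which holds since $m\geq 0$ and $j\geq 1$. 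Multiplying these $k-1$ inequalities gives the desired bound, with equality forced only when every factor is tight.

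I do not anticipate a genuine obstacle here; the content is elementary and the argument is short. The only point requiring a little care is the term-by-term matching: one must pair the $k-1$ ascending factors $\tfrac{m+j}{j}$ against the $k-1$ identical bounds $(m+1)$ rather than attempting a single crude estimate of the whole product, since the naive bound $(m+k-1)^{k-1}/(k-1)!$ is far too weak. Tracking the equality case ($m=0$, or $k=2$, where $d_2=d$ and the inequality is an identity) also confirms that the constant $k-1$ in the exponent cannot be improved in general.
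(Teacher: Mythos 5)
Your proof is correct, but it takes a genuinely different route from the paper's. The paper argues by induction on $k$: starting from the convolution identity $d_k(n)=\sum_{q\mid n}d_{k-1}(q)$ and the monotonicity $d_{k-1}(q)\leq d_{k-1}(n)$ for $q\mid n$, it obtains $d_k(n)\leq d_{k-1}(n)\,d(n)$ and concludes by the induction hypothesis, never touching the prime factorization. You instead reduce by multiplicativity to prime powers and prove the local inequality $\binom{m+k-1}{k-1}\leq (m+1)^{k-1}$ via the factorization $\binom{m+k-1}{k-1}=\prod_{j=1}^{k-1}\frac{m+j}{j}$ and the term-by-term comparison $\frac{m+j}{j}\leq m+1$, which after clearing denominators is $m(j-1)\geq 0$ — valid for all $m\geq 0$ and $1\leq j\leq k-1$, so every step checks out. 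The paper's induction is marginally shorter and works directly from the recursive definition of $d_k$, whereas your argument buys more information: it isolates the exact prime-power inequality underlying the global bound, makes the equality cases transparent ($k=2$, where the statement is an identity, or $m=0$ at every prime, i.e.\ $n=1$), and supports your closing remark that the exponent $k-1$ is optimal, since $\binom{m+k-1}{k-1}$ grows like $m^{k-1}/(k-1)!$ as $m\to\infty$ while $(m+1)^{c}$ with $c<k-1$ would eventually be smaller. Both proofs are complete and elementary; yours is a sharpening rather than a shortcut.
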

 \begin{proof}
 We prove the statement by induction on $k$. For $k=2$, it is a tautology. Assume that $k\geq 3$ and 
 \[d_{k-1}(n)\leq d(n)^{k-2}.\]
 Then since
 \[d_k(n)=\sum_{q\mid n}d_{k-1}(q),\]
 and for each $q\mid n$, $d_{k-1}(q)\leq d_{k-1}(n)$, we find that
 \[d_k(n)\leq d_{k-1}(n)\sum_{q\mid n}1=d_{k-1}(n)d(n).\]
 It follows from the induction hypothesis that
 \[d_k(n)\leq d(n)^{k-1},\]which completes the proof by induction. \qedhere
 \end{proof}
 When $p$ is a prime and $m$ is a nonnegative integer, \eqref{dkn} gives
\begin{equation}\label{dkn2}d_k(p^m)=\binom{m+k-1}{k-1}=\binom{m+k-1}{m}=\frac{k(k+1)\cdots (k+m-1)}{m!}.\end{equation}Using this, one can show that  \begin{equation}\label{dkpm}d_k(p^m)\leq k^m.\end{equation}
Indeed, if $m=0$ then $d_k(1)=1=k^0$. If $m\geq 1$, we have from \eqref{dkn2}
 \[d_k(p^m)=\prod_{i=1}^m\frac{k+i-1}{i}.\]
For any $i\geq 1$,
\[ki-(k+i-1)=(k-1)(i-1)\geq 0.\]
It follows that
\[d_k(p^m)\leq\prod_{i=1}^m \frac{ki}{i}=k^m.\]
Let
\[\pi(x)=\sum_{p\leq x}1\] be the prime counting function, and let
\[\vartheta(x)=\sum_{p\leq x}\log p\hspace{1cm}\text{and}\hspace{1cm}\psi(x)=\sum_{p^m\leq x}\log p \] be  respectively the Chebyshev's theta function and psi function. Then $\vartheta(x)=\psi(x)=0$ when $x<2$, and
\[\psi(x)=\sum_{m=1}^{\infty}\vartheta(x^{1/m}).\]

We will need  the following results later.
\begin{lemma}\label{20250723_3}
We have the following bounds.
\begin{enumerate}[(i)]
\item For $x>1$, $\pi(x)\leq  \di\frac{  \omega_1x}{ \log x}$, where $\omega_1=1.2551$.
\item For $x\geq 8$, $\pi(x)\leq \di\frac{x}{2}$. 
\item For $x\geq 0$, $\vartheta(x)\leq \omega_2 x$, where $\omega_2=1.00001$.

\end{enumerate}
\end{lemma}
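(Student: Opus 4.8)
The plan is to treat this as a collection lemma whose three parts are already nearly established in the preceding discussion; the proof is essentially a matter of citing two explicit results from the literature and supplying one short elementary deduction. I would therefore organize the argument part by part, spending almost all the effort on (ii), since (i) and (iii) are direct quotations.

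For part (i), I would simply invoke Corollary 1 of \cite{RosserSchoenfeld1962}, which states precisely that $\pi(x)\leq 1.2551\,x/\log x$ for all $x>1$; with $\omega_1=1.2551$ this is the claim verbatim. For part (iii), I would invoke Corollary 2.1 of \cite{Broadbent}, which gives $\vartheta(x)\leq\widetilde{\omega}\,x$ for all $x\geq 0$ with $\widetilde{\omega}=1+1.93378\times 10^{-8}$, and then note the crude majorization $\widetilde{\omega}<1.00001=\omega_2$, so that $\vartheta(x)\leq\omega_2 x$ holds on the same range. No computation beyond this inequality between two explicit constants is needed.

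The only part carrying genuine (if modest) content is (ii), which I would derive from (i). The idea is to split the range at $x=13$. For $x\geq 13$, since $\log x$ is increasing, part (i) gives
\[\pi(x)\leq\frac{\omega_1 x}{\log x}\leq\frac{\omega_1}{\log 13}\,x=\frac{1.2551}{\log 13}\,x<\frac{x}{2},\]
where the final inequality follows from the numerical estimate $1.2551/\log 13\approx 0.4893<1/2$. For the remaining interval $8\leq x<13$, I would use that $\pi$ is a nondecreasing step function jumping only at primes, so it suffices to check the inequality at the relevant breakpoints: on $[8,11)$ one has $\pi(x)=4\leq x/2$, and on $[11,13)$ one has $\pi(x)=5\leq x/2$. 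Combining the two ranges yields $\pi(x)\leq x/2$ for all $x\geq 8$.

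The main obstacle, such as it is, lies in the transition point of (ii): the asymptotic-type bound from (i) only becomes strong enough to beat $x/2$ once $x$ is large enough that $\omega_1/\log x<1/2$, i.e. for $x\geq 13$, so the small window $8\leq x<13$ cannot be handled by (i) alone and must be dispatched by the finite check above. Everything else is a direct appeal to the cited explicit prime-counting estimates.
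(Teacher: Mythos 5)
Your proposal is correct and follows essentially the same route as the paper: citing Corollary 1 of \cite{RosserSchoenfeld1962} for (i), Corollary 2.1 of \cite{Broadbent} with the crude majorization $\widetilde{\omega}<1.00001$ for (iii), and deducing (ii) from (i) for $x\geq 13$ with a finite check on $8\leq x<13$. Your only addition is making that finite check explicit via the values $\pi(x)=4$ on $[8,11)$ and $\pi(x)=5$ on $[11,13)$, where the paper merely states that numerical computations confirm the inequality.
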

\begin{proof}
Statement (i) follows from Corollary 1 of \cite{RosserSchoenfeld1962}. Statement (ii) follows easily from statement (i) or more simply from the fact that half of the integers are even. Statement (iii) follows from  Corollary 2.1 of \cite{Broadbent} where
$\vartheta(x)\leq \left(1+1.93378\times 10^{-8}\right)x$ for all $ x\geq 0$ was proved.
\end{proof}

\bigskip
 \section{Superior Highly Composite Numbers}
From now on, we will concentrate on a fixed $k\geq 2$. 
The concepts of highly composite and  superior  highly composite numbers were introduced by Ramanujan \cite{Ramanujan}. In the same paper, he has also defined $k$-highly  composite numbers (see \cite{Ramanujan_2}), so that the highly composite numbers introduced earlier  are $2$-highly composite. In \cite{Duras}, superior $k$-highly composite numbers were introduced, generalizing the superior  highly composite numbers ($k=2$ case) defined by Ramanujan \cite{Ramanujan}. 
\begin{definition}

A positive integer $N$ is said to be $k$-highly composite   if 
\[d_k(n)<d_k(N)\hspace{1cm}\text{for all}\; 1\leq n< N.\]
A positive integer $N$ is said to be superior $k$-highly composite  if there exists $\varepsilon>0$  such that 
\[\frac{d_k(n)}{n^{\varepsilon}}\leq \frac{d_k(N)}{N^{\varepsilon}}\hspace{1cm}\text{for all}\;n\in\mb{Z}^+.\]
\end{definition}
 
A superior $k$-highly composite number   is $k$-highly composite.  In the following, we study the superior $k$-highly composite numbers following the recent study of superior highly composite numbers given in \cite{Nicolas_22}.

For a fixed $\varepsilon>0$,  \eqref{20250731_1} implies that
\[\lim_{n\to\infty}\frac{d_k(n)}{n^{\varepsilon}}=0.\]
Hence, there exists a positive integer $N$ so that
\[\frac{d_k(n)}{n^{\varepsilon}}\leq \frac{d_k(N)}{N^{\varepsilon}}\hspace{1cm}\text{for all}\;n\in\mb{Z}^+.\] Such an integer $N$ is called a superior $k$-highly composite number   associated to  $\varepsilon$, and is denoted  as $N_{k,\varepsilon}$. In other words, for every $\varepsilon>0$, there exists a superior $k$-highly composite number   associated to  $\varepsilon$. To find $N_{k,\varepsilon}$ for a fixed $\varepsilon>0$, we recall the following lemma from \cite{Nicolas}.  

\begin{lemma}\label{20250717_1}
If $g(n)$ is a positive multiplicative   function such that  $\di\lim_{p^m\to\infty}g(p^m)=0$, then $g(n)$ is bounded and 
\[\max_{n\in \mathbb{Z}^+}g(n)=\prod_{p}\max_{m\geq 0} g(p^m).\]
\end{lemma}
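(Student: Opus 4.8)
The plan is to reduce the global maximization over all positive integers to a maximization over prime powers, exploiting the multiplicativity of $g$ together with the decay hypothesis. First I would record the normalization $g(1)=1$: since $g$ is multiplicative and positive, $g(1)=g(1)^2$ forces $g(1)=1$. Consequently, for each prime $p$ the sequence $m\mapsto g(p^m)$ starts at $g(p^0)=1$ and tends to $0$ as $m\to\infty$ by hypothesis, so its supremum $M_p:=\max_{m\geq 0}g(p^m)$ is finite and \emph{attained} at some exponent $m_p$, with $M_p\geq g(p^0)=1$.

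The crucial finiteness step is to show that $M_p=1$ for all but finitely many primes. By the hypothesis $\lim_{p^m\to\infty}g(p^m)=0$, there is a constant $B$ such that $g(p^m)\leq 1$ whenever $p^m>B$. For any prime $p>B$ and any $m\geq 1$ we have $p^m\geq p>B$, hence $g(p^m)\leq 1$, and since $g(p^0)=1$ this gives $M_p=1$. Thus only the finitely many primes $p\leq B$ can contribute a factor exceeding $1$, so the product $\prod_p M_p=\prod_{p\leq B}M_p$ is a genuinely finite product, and in particular is finite.

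With this in hand, both inequalities are straightforward. For the upper bound, write any $n$ in its prime factorization $n=\prod_i p_i^{a_i}$; multiplicativity gives $g(n)=\prod_i g(p_i^{a_i})\leq\prod_i M_{p_i}\leq\prod_p M_p$, where the last step is legitimate precisely because every $M_p\geq 1$. This already proves that $g$ is bounded, with $\sup_n g(n)\leq\prod_p M_p$. For attainment, I set $N=\prod_{p\leq B}p^{m_p}$, using the maximizing exponents $m_p$ from the first step (and taking $m_p=0$ whenever $M_p=1$); then $g(N)=\prod_{p\leq B}M_p=\prod_p M_p$, so the supremum is achieved and equals the asserted product.

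The main obstacle is the finiteness of $\prod_p M_p$, i.e. the step that forces $M_p=1$ for all large primes. Everything else is formal manipulation of multiplicativity, but this step is exactly where the full strength of $\lim_{p^m\to\infty}g(p^m)=0$ (rather than merely $g(p^m)\to 0$ for each fixed $p$, or $g(p)\to 0$ over primes) is needed: one must control $g(p^m)$ uniformly over \emph{all} prime powers simultaneously, so that cofinitely many local maxima equal $1$ and the infinite product collapses to a finite one.
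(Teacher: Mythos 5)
Your proof is correct and complete: the normalization $g(1)=1$, the observation that for each fixed $p$ the maximum $M_p=\max_{m\geq 0}g(p^m)$ is attained, the use of the \emph{uniform} decay hypothesis $\lim_{p^m\to\infty}g(p^m)=0$ to force $M_p=1$ for every prime $p>B$ (so that $\prod_p M_p$ collapses to a finite product), the bound $g(n)\leq\prod_p M_p$ via $M_p\geq 1$, and the attainment at $N=\prod_{p\leq B}p^{m_p}$ are exactly the points that need checking. The paper itself states this lemma without proof, citing \cite{Nicolas}, and your argument is the standard one that the citation refers to, so your write-up simply supplies, correctly, the details the paper omits.
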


The following theorem characterizes superior $k$-highly composite numbers, generalizing the $k=2$ case given in \cite{Nicolas}. The result has been stated in \cite{Duras} without proof, and the notation used in \cite{Duras} is also slightly different.
\begin{theorem}\label{20250717_2}
Given $\varepsilon>0$, let $N_{k,\varepsilon}$ be a superior $k$-highly composite number  associated to $\varepsilon$. Then
\[N_{k,\varepsilon}=\prod_{p\leq k^{1/\varepsilon}}p^{m(p,k,\varepsilon)},\]
where
\begin{align*}
m(p,k,\varepsilon)=\begin{cases}\di \left\lfloor \frac{k-1}{p^{\varepsilon}-1}\right\rfloor,\hspace{1cm} &\text{if} \;\;\di  \frac{k-1}{p^{\varepsilon}-1}\notin \mb{Z}^+,\\[2ex]\di \left\lfloor \frac{k-1}{p^{\varepsilon}-1}\right\rfloor\;\;\text{or}\;\;\left\lfloor \frac{k-1}{p^{\varepsilon}-1}\right\rfloor-1,\hspace{1cm} &\text{if} \;\; \di \frac{k-1}{p^{\varepsilon}-1}\in \mb{Z}^+\end{cases} 
\end{align*}
\end{theorem}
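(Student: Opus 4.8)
The plan is to apply Lemma \ref{20250717_1} to the function
\[g(n)=\frac{d_k(n)}{n^{\varepsilon}},\]
and then reduce the determination of $N_{k,\varepsilon}$ to a one-variable optimization at each prime. First I would check the hypotheses of Lemma \ref{20250717_1}. The function $g$ is positive, and it is multiplicative because $d_k$ is multiplicative while $n\mapsto n^{\varepsilon}$ is completely multiplicative. To verify $\lim_{p^m\to\infty}g(p^m)=0$, I would write $g(p^m)=\binom{m+k-1}{k-1}(p^m)^{-\varepsilon}$ and note that, since $m\leq\log_2(p^m)$, the numerator $\binom{m+k-1}{k-1}$ grows at most like a fixed power of $\log(p^m)$, whereas $(p^m)^{-\varepsilon}$ decays polynomially in $p^m$; hence the product tends to $0$ (alternatively one may invoke \eqref{20250731_1}). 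Lemma \ref{20250717_1} then asserts that $g$ is bounded and $\max_{n}g(n)=\prod_p\max_{m\geq 0}g(p^m)$.

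The crucial logical step is to convert this value identity into a characterization of the maximizers. Writing $g(n)=\prod_p g(p^{v_p(n)})$, where $v_p(n)$ is the exponent of $p$ in $n$, the identity $\max_n g=\prod_p\max_m g(p^m)$ forces a positive integer $N$ to satisfy $g(N)=\max_n g(n)$ if and only if, for every prime $p$, the exponent $v_p(N)$ attains $\max_{m\geq 0}g(p^m)$. Thus $N_{k,\varepsilon}$ is obtained by choosing, independently at each prime $p$, an exponent $m$ maximizing
\[g_p(m):=\binom{m+k-1}{k-1}p^{-m\varepsilon}.\]

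Next I would solve this one-variable problem by examining consecutive ratios. A direct computation gives
\[\frac{g_p(m+1)}{g_p(m)}=\frac{m+k}{m+1}\,p^{-\varepsilon},\]
so, setting $M=\tfrac{k-1}{p^{\varepsilon}-1}$, the ratio exceeds $1$ exactly when $m+1<M$, equals $1$ exactly when $m+1=M$, and is less than $1$ exactly when $m+1>M$. Hence the sequence $g_p(m)$ is non-decreasing for $0\leq m\leq\lfloor M\rfloor$ and strictly decreasing for $m\geq\lfloor M\rfloor$, so it is unimodal with maximal value $g_p(\lfloor M\rfloor)$. When $M\notin\mb{Z}^+$ the maximum is attained uniquely at $m=\lfloor M\rfloor$; when $M\in\mb{Z}^+$ we have $g_p(\lfloor M\rfloor-1)=g_p(\lfloor M\rfloor)$, so the maximum is attained at both $m=\lfloor M\rfloor-1$ and $m=\lfloor M\rfloor$. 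This is precisely the two-case formula for $m(p,k,\varepsilon)$.

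Finally I would pin down which primes occur. Since $k\geq 2$ and $p^{\varepsilon}>1$ we have $M>0$, and $M\geq 1$ if and only if $p^{\varepsilon}\leq k$, i.e.\ $p\leq k^{1/\varepsilon}$. Therefore, for $p>k^{1/\varepsilon}$ we have $0<M<1$, so $\lfloor M\rfloor=0$ and $g_p$ is strictly decreasing from $m=0$; the unique maximizer is $m=0$, and such a prime does not divide $N_{k,\varepsilon}$. Collecting the surviving primes $p\leq k^{1/\varepsilon}$ with exponents $m(p,k,\varepsilon)$ yields the asserted product. I expect the main obstacle to be bookkeeping rather than depth: correctly translating the ratio inequalities into the floor expression and handling the tie case $M\in\mb{Z}^+$ (the source of the two admissible exponents), together with the clean passage from the value identity of Lemma \ref{20250717_1} to the characterization of all maximizers.
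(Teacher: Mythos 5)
Your proposal is correct and takes essentially the same approach as the paper: apply Lemma \ref{20250717_1} to $g(n)=d_k(n)/n^{\varepsilon}$, reduce to an independent optimization of $g(p^m)$ at each prime, and read off the maximizing exponents from the consecutive ratio $\frac{g_p(m+1)}{g_p(m)}=\frac{m+k}{m+1}\,p^{-\varepsilon}$, with the tie case $\frac{k-1}{p^{\varepsilon}-1}\in\mb{Z}^+$ yielding the two admissible exponents. Your only departures are minor streamlinings: you eliminate the primes $p>k^{1/\varepsilon}$ inside the same ratio analysis by noting $\frac{k-1}{p^{\varepsilon}-1}<1$ there, whereas the paper invokes the separate inequality of Lemma \ref{20250718_6}, and you explicitly verify the hypothesis $\lim_{p^m\to\infty}g(p^m)=0$ of Lemma \ref{20250717_1}, which the paper leaves implicit.
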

\begin{proof}
For $n\in\mb{Z}^+$, let
\[g(n)=\frac{d_k(n)}{n^{\varepsilon}}.\]
Then $g(n)$ is a positive multiplicative function, and by definition of superior $k$-highly composite numbers,
\[\max_{n\in\mb{Z}^+}g(n)=g(N_{k,\varepsilon}).\]
By Lemma \ref{20250717_1}, 
\[g(N_{k,\varepsilon})=\prod_{p}\max_{m\geq 0}g(p^{m}).\]
For a fixed prime $p$, when $m$ is a nonnegative integer,
\[g(p^m)=\frac{d_k(p^m)}{p^{m\varepsilon}}=\binom{m+k-1}{k-1}\frac{1} {p^{m\varepsilon}}.\]
Observe that  $g(p^0)=1$. If $p> k^{1/\varepsilon}$ and $m\geq 1$, then $p^{m\varepsilon}>k^m$. In this case, \eqref{dkpm} implies that
\[g(p^m)\leq \frac{k^m}{p^{m\varepsilon}}< 1.\] Hence, for $p> k^{1/\varepsilon}$,
\[\max_{m\geq 0}g(p^{m})=1.\]
Therefore,
\[N_{k,\varepsilon}=\prod_{p\leq k^{1/\varepsilon}}p^{m(p,k,\varepsilon)},\]where $m(p,k,\varepsilon)$ is a nonnegative integer such that
\[g(p^m)\leq g(p^{m(p,k,\varepsilon)})\hspace{1cm}\text{for all}\;m\geq 0.\]
 Now notice that $g(p^{m-1})\leq g(p^{m})$ if and only if
\[p^{\varepsilon}\leq \frac{m+k-1}{m},\]
if and only if
\[m\leq \frac{k-1}{p^{\varepsilon}-1}.\]
In fact, $g(p^{m-1})=g(p^m)$ if and only if
\[m=\frac{k-1}{p^{\varepsilon}-1}\in\mb{Z}^+.\]
The assertion of the theorem follows from this. \qedhere
\end{proof}

By Theorem \ref{20250717_2}, one can find a superior $k$-highly composite number $N_{k,\varepsilon}$  associated to any $\varepsilon>0$ by computing $m(p,k,\varepsilon)$ for all primes $p\leq k^{1/\varepsilon}$. For fixed $\varepsilon>0$, the largest superior $k$-highly composite number associated to  $\varepsilon$ is
\begin{equation}\label{20250731_2}\widetilde{N}_{k,\varepsilon}=\prod_{p\leq k^{1/\varepsilon}}p^{\widetilde{m}(p,k,\varepsilon)},\hspace{1cm}\text{where}\;\;\widetilde{m}(p,k,\varepsilon)=\left\lfloor\frac{k-1}{p^{\varepsilon}-1}\right\rfloor.\end{equation} 
 
For a fixed prime $p$, define the number  $\varepsilon[k,p;m]$,  $ m\in\mb{Z}^+$ by
\[\varepsilon[k,p;m]=\log_p \frac{m+k-1}{m}=\log_p\left(1+\frac{k-1}{m}\right).\]
Then  
\[\varepsilon[k,p;m+1]<\varepsilon[k,p;m]\hspace{1cm}\text{for all}\;m\in\mb{Z}^+,\] 
and if $\varepsilon[k,p;m+1]<\varepsilon\leq\varepsilon[k,p;m]$,
\[\left\lfloor\frac{k-1}{p^{\varepsilon}-1}\right\rfloor=m.\]
Let $\mathscr{A}_p(k)$ be the set 
\[\mathscr{A}_p(k)=\left\{\varepsilon[k,p;m]\, |\, m\in\mb{Z}^+ \right\}.\]
Then it is the set where the function 
\[\left\lfloor\frac{k-1}{p^{\varepsilon}-1}\right\rfloor,\]
as a function of $\varepsilon\in (0,\infty)$, has jumps.
Let
\[\mathscr{A}(k)=\bigcup_{p}\mathscr{A}_p(k).\]This is a countable subset of positive real numbers, the largest element being $\varepsilon[k,2;1]=\log_2k$. 
One can list down the elements of $\mathscr{A}(k)$ in decreasing order as $\varepsilon_1[k], \varepsilon_2[k], \ldots$.

If $\varepsilon\in (0,\infty)\setminus \mathscr{A}(k)$, Theorem \ref{20250717_2} says that there is a unique superior $k$-highly composite number   associated to $\varepsilon$ which is the number $\widetilde{N}_{k,\varepsilon}$ defined by \eqref{20250731_2}.
If  $\varepsilon>\varepsilon_1[k]$, then $\widetilde{N}_{k,\varepsilon}=1$.

If $\varepsilon\in\mathscr{A}(k)$, then there are more than one  superior $k$-highly composite numbers    associated to $\varepsilon$.

If  $p_1$ and $p_2$ are two distinct primes, the two sets  $\mathscr{A}_{p_1}(k)$ and $\mathscr{A}_{p_2}(k)$ are not necessarily disjoint.  For example, when $k=3$, 
\[\log_2\left(1+\frac{2}{2}\right)=\varepsilon[3,2;2]=\varepsilon[3,3;1]=\log_3\left(1+\frac{2}{1}\right)\] is in $\mathscr{A}_2(3)$ and $\mathscr{A}_{3}(3)$.
In fact, for any positive integer $r$, there exist $k\geq 2$ and $\varepsilon>0$ such that $\varepsilon\in\mathscr{A}_p(k)$ for at least $r$ prime numbers $p$. Take $r$ distinct prime numbers $p_1, \ldots, p_r$ and consider 
\[k=(p_1-1)\cdots (p_r-1)+1.\]
For $1\leq i\leq r$,
\[m_i=\frac{k-1}{p_i-1}\] is a positive integer and we find that
\[1+\frac{k-1}{m_i}=p_i.\]
Thus,
\[\varepsilon[k,p_1;m_1]=\cdots =\varepsilon[k,p_r;m_r]=1.\]

In \cite{Nicolas_22}, those $\varepsilon$ in the set $\mathscr{A}(k)$ which belongs to $\mathscr{A}_p(k)$ for more than one $p$ is said to be \emph{ extraordinary}. From our discussing above, there exist infinitely many $k$ which have extraordinary $\varepsilon$. 

In the following, we give generalizations of Lemma 3.6 and  Theorem 3.7 in \cite{Nicolas_22} to $k>2$.  
 
For a prime $p$, the largest element in $\mathscr{A}_p(k)$ is $\log_pk$. Hence, for any positive number $\varepsilon$, it can be in $\mathscr{A}_p(k)$ for finitely many primes $p$.
Now consider two consecutive numbers in the set $\mathscr{A}(k)$ given by $\varepsilon_{\alpha}$ and $\varepsilon_{\beta}$,  with $\varepsilon_{\alpha}<\varepsilon_{\beta}$.
Let $p_{\alpha_1}$, $p_{\alpha_2}$, $\ldots$, $p_{\alpha_r}$ be the list of all the primes $p$ so that $\varepsilon_{\alpha}\in \mathscr{A}_{p}(k)$, and let 
$p_{\beta_1}$, $p_{\beta_2}$, $\ldots$, $p_{\beta_s}$ be the list of all the primes $p$ so that $\varepsilon_{\beta}\in \mathscr{A}_{p}(k)$. Then there are positive integers $m_{\alpha_1}$, $m_{\alpha_2}$, $\ldots$, $m_{\alpha_r}$, $m_{\beta_1}$, $m_{\beta_2}$, $\ldots$, $m_{\beta_s}$ such that
\[\varepsilon_{\alpha}=\varepsilon[k,p_{\alpha_1};m_{\alpha_1}]=\cdots=\varepsilon[k,p_{\alpha_r};m_{\alpha_r}],\]
\[\varepsilon_{\beta}=\varepsilon[k,p_{\beta_1};m_{\beta_1}]=\cdots=\varepsilon[k,p_{\beta_s};m_{\beta_s}].\]
Theorem \ref{20250717_2} says that there are $2^r$ superior $k$-highly composite numbers   associated to $\varepsilon_{\alpha}$, the largest one being $\widetilde{N}_{k,\varepsilon_{\alpha}}$ defined by \eqref{20250731_2}. Each of the $N_{k,\varepsilon_{\alpha}}$  can be written as
\[\widetilde{N}_{k,\varepsilon_{\alpha}}\prod_{i=1}^r p_{\alpha_i}^{-a_i},\]where
$(a_1, a_2, \ldots, a_r)$ is an $r$-tuple with $a_i=0$ or $1$. 
Similarly, all the superior $k$-highly composite numbers   associated to $\varepsilon_{\beta}$ are 
\[\widetilde{N}_{k,\varepsilon_{\beta}}\prod_{j=1}^s p_{\beta_j}^{-b_j},\]where
$(b_1, b_2, \ldots, b_s)$ is a $s$-tuple with $b_j=0$ or $1$. For $\varepsilon\in (\varepsilon_{\alpha},\varepsilon_{\beta})$,
$\widetilde{m}(p,k,\varepsilon)$ is a constant for all primes $p$. In fact, if $p\neq p_{\alpha_i}$ for all $1\leq i\leq r$, then
\[\widetilde{m}(p,k,\varepsilon_{\beta})=\widetilde{m}(p,k,\varepsilon)=\widetilde{m}(p,k,\varepsilon_{\alpha}).\]
If $p=p_{\alpha_i}$ for some $1\leq i\leq r$, then
\[\widetilde{m}(p_{\alpha_i},k,\varepsilon_{\beta})=\widetilde{m}(p_{\alpha_i},k,\varepsilon)=\widetilde{m}(p_{\alpha_i},k,\varepsilon_{\alpha})-1.\]
Therefore, for all $\varepsilon\in (\varepsilon_{\alpha},\varepsilon_{\beta})$, the unique superior $k$-highly composite number $\widetilde{N}_{k,\varepsilon}$   associated to $\varepsilon$ is
\[\widetilde{N}_{k,\varepsilon}=\widetilde{N}_{k,\varepsilon_{\beta}}=\widetilde{N}_{k,\varepsilon_{\alpha}}\prod_{i=1}^r p_{\alpha_i}^{-1}.\] Note that  $\widetilde{N}_{k,\varepsilon_{\beta}}$ is one of  the superior $k$-highly composite numbers   associated to $\varepsilon_{\alpha}$. If $\varepsilon_{\alpha}$ is not extraordinary, then $\widetilde{N}_{k,\varepsilon_{\alpha}}$ and $\widetilde{N}_{k,\varepsilon_{\beta}}$ are the only two superior $k$-highly composite numbers associated to $\varepsilon_{\alpha}$.

 This gives a way to produce all the superior $k$-highly composite numbers. For  $\varepsilon_1[k]=\log_2k$, it is obvious that it is only contained in $\mathscr{A}_2(k)$. Hence, the superior $k$-highly composite numbers  associated to $\varepsilon_1[k]=\log_2k$ are 1 and 2. Using the discussion above, one can write a computer algorithm to generate all the superior $k$-highly composite numbers  associated to $\varepsilon$ with $\varepsilon$ larger than or equal to a certain $\varepsilon_0$. 
 
 \vfill\pagebreak
 In Table \ref{tab5}, we list down the first 20 $\varepsilon$'s in the set $\mathscr{A}(2)$ and the corresponding superior $2$-highly composite numbers $\widetilde{N}_{2,\varepsilon}$. In Table \ref{tab6}, we list down the range of  $\varepsilon$ and the corresponding superior $2$-highly composite numbers $N_{2,\varepsilon}$. 
 
 \begin{table}[h]
 \caption{\label{tab5}The first 20 $\varepsilon$'s in $\mathscr{A}(2)$ and the corresponding $\widetilde{N}_{2,\varepsilon}$.}
 \begin{tabular}{|c|c|c|l|}

\hline
 $\varepsilon[2,p;m]$	&	$p$	& $m$		 & $\widetilde{N}_{2,\varepsilon}$ \\[1ex]\hline
1	&	2	&	1	&$	2=2	$\\\hline
0.6309	&	3	&	1	&$	6=2\times 3	$\\\hline
0.5850	&	2	&	2	&$	12=2^2\times 3	$\\\hline
0.4307	&	5	&	1	&$	60=2^2\times 3\times 5	$\\\hline
0.4150	&	2	&	3	&$	120=2^3\times 3\times 5	$\\\hline
0.3691	&	3	&	2	&$	360=2^3\times 3^2\times 5	$\\\hline
0.3562	&	7	&	1	&$	2520=2^3\times 3^2\times 5\times 7	$\\\hline
0.3219	&	2	&	4	&$	5040=2^4\times 3^2\times 5\times 7	$\\\hline
0.2891	&	11	&	1	&$	55440=2^4\times 3^2\times 5\times 7\times 11	$\\\hline
0.2702	&	13	&	1	&$	720720=2^4\times 3^2\times 5\times 7\times 11\times 13	$\\\hline
0.2630	&	2	&	5	&$	1441440=2^5\times 3^2\times 5\times 7\times 11\times 13	$\\\hline
0.2619	&	3	&	3	&$	4324320=2^5\times 3^3\times 5\times 7\times 11\times 13	$\\\hline
0.2519	&	5	&	2	&$	21621600=2^5\times 3^3\times 5^2\times 7\times 11\times 13	$\\\hline
0.2447	&	17	&	1	&$	367567200=2^5\times 3^3\times 5^2\times 7\times 11\times 13\times 17	$\\\hline
0.2354	&	19	&	1	&$	6983776800=2^5\times 3^3\times 5^2\times 7\times 11\times 13\times 17\times 19	$\\\hline
0.2224	&	2	&	6	&$	13967553600=2^6\times 3^3\times 5^2\times 7\times 11\times 13\times 17\times 19	$\\\hline
0.2211	&	23	&	1	&$	321253732800=2^6\times 3^3\times 5^2\times 7\times 11\times 13\times 17\times 19\times 23	$\\\hline
0.2084	&	7	&	2	&$	2248776129600=2^6\times 3^3\times 5^2\times 7^2\times 11\times 13\times 17\times 19\times 23	$\\\hline
0.2058	&	29	&	1	&$	65214507758400=2^6\times 3^3\times 5^2\times 7^2\times 11\times 13\times 17\times 19\times 23\times 29	$\\\hline
0.2031	&	3	&	4	&$	195643523275200=2^6\times 3^4\times 5^2\times 7^2\times 11\times 13\times 17\times 19\times 23\times 29	$\\\hline

 \end{tabular}
 \end{table}
 
 \vfill\pagebreak
 ~
 \begin{table}
 \caption{\label{tab6}The range of $\varepsilon$ and the corresponding $N_{2,\varepsilon}$.}
 \begin{tabular}{|c|l|}
 \hline
 Range of $\varepsilon$	& $N_{k,\varepsilon}$	\\\hline
$	\varepsilon>1	$&$	1	$\\\hline
$	\varepsilon=1	$&$	1, 2	$\\\hline
$	0.6309<\varepsilon<1	$&$	2	$\\\hline
$	\varepsilon=0.6309	$&$	2, 6	$\\\hline
$	0.5850<\varepsilon<0.6309	$&$	6	$\\\hline
$	\varepsilon=0.5850	$&$	6, 12	$\\\hline
$	0.4307<\varepsilon<0.5850	$&$	12	$\\\hline
$	\varepsilon=0.4307	$&$	12, 60	$\\\hline
$	0.4150<\varepsilon<0.4307	$&$	60	$\\\hline
$	\varepsilon=0.4150	$&$	60, 120	$\\\hline
$	\hsp 0.3691<\varepsilon<\hsp 0.4150	$&$	120	$\\\hline
$	\varepsilon=0.3691	$&$	120, 360	$\\\hline
$	0.3562<\varepsilon<0.3691	$&$	360	$\\\hline
$	\varepsilon=0.3562	$&$	360, 2520	$\\\hline
$	0.3219<\varepsilon<0.3562	$&$	2520	$\\\hline
$	\varepsilon=0.3219	$&$	2520, 5040	$\\\hline
$	0.2891<\varepsilon<0.3219	$&$	5040	$\\\hline
$	\varepsilon=0.2891	$&$	5040, 55440	$\\\hline
$	0.2702<\varepsilon<0.2891	$&$	55440	$\\\hline
$	\varepsilon=0.2702	$&$	55440, 720720	$\\\hline
$	0.2630<\varepsilon<0.2702	$&$	720720	$\\\hline
$	\varepsilon=0.2630	$&$	720720, 1441440	$\\\hline
$	0.2619<\varepsilon<0.2630	$&$	1441440	$\\\hline
$	\varepsilon=0.2619	$&$	1441440, 4324320	$\\\hline
$	0.2519<\varepsilon<0.2619	$&$	4324320	$\\\hline
$	\varepsilon=0.2519	$&$	4324320, 21621600	$\\\hline
$	0.2447<\varepsilon<0.2519	$&$	21621600	$\\\hline
$	\varepsilon=0.2447	$&$	21621600, 367567200	$\\\hline
$	0.2354<\varepsilon<0.2447	$&$	367567200	$\\\hline
$	\varepsilon=0.2354	$&$	367567200, 6983776800	$\\\hline
$	0.2224<\varepsilon<0.2354	$&$	6983776800	$\\\hline
$	\varepsilon=0.2224	$&$	6983776800, 13967553600	$\\\hline
$	0.2211<\varepsilon<0.2224	$&$	13967553600	$\\\hline
$	\varepsilon=0.2211	$&$	13967553600, 321253732800	$\\\hline
$	0.2084<\varepsilon<0.2211	$&$	321253732800	$\\\hline
$	\varepsilon=0.2084	$&$	321253732800, 2248776129600	$\\\hline

\end{tabular}
\end{table}

\vfill\pagebreak
 In Table \ref{tab7}, we list down the first 20 $\varepsilon$'s in the set $\mathscr{A}(3)$ and the corresponding superior $3$-highly composite numbers $\widetilde{N}_{3,\varepsilon}$. In Table \ref{tab8}, we list down the range of  $\varepsilon$ and the corresponding superior $3$-highly composite numbers $N_{3,\varepsilon}$. 
 
 \begin{table}[h]
 \caption{\label{tab7}The first 20 $\varepsilon$'s in $\mathscr{A}(3)$ and the corresponding $\widetilde{N}_{3,\varepsilon}$.}
 \begin{tabular}{|c|c|c|l|}
 \hline
 $\varepsilon[3,p;m]$	&	$p$	& $m$		 & $\widetilde{N}_{3,\varepsilon}$ \\[1ex]\hline
1.5850	&	2	&	1	&$	2=2	$\\\hline
\multirow{2}{2.8em}{1}	&	2	&	2	& 	\multirow{2}{10em}{$12=2^2\times 3	$}\\
 	&	3	&	1	&	 \\\hline
0.7370	&	2	&	3	&$	24=2^3\times 3	$\\\hline
0.6826	&	5	&	1	&$	120=2^3\times 3\times 5	$\\\hline
0.6309	&	3	&	2	&$	360=2^3\times 3^2\times 5	$\\\hline
0.5850	&	2	&	4	&$	720=2^4\times 3^2\times 5	$\\\hline
0.5646	&	7	&	1	&$	5040=2^4\times 3^2\times 5\times 7	$\\\hline
0.4854	&	2	&	5	&$	10080=2^5\times 3^2\times 5\times 7	$\\\hline
0.4650	&	3	&	3	&$	30240=2^5\times 3^3\times 5\times 7	$\\\hline
0.4582	&	11	&	1	&$	332640=2^5\times 3^3\times 5\times 7\times 11	$\\\hline
0.4307	&	5	&	2	&$	1663200=2^5\times 3^3\times 5^2\times 7\times 11	$\\\hline
0.4283	&	13	&	1	&$	21621600=2^5\times 3^3\times 5^2\times 7\times 11\times 13	$\\\hline
0.4150	&	2	&	6	&$	43243200=2^6\times 3^3\times 5^2\times 7\times 11\times 13	$\\\hline
0.3878	&	17	&	1	&$	735134400=2^6\times 3^3\times 5^2\times 7\times 11\times 13\times 17	$\\\hline
0.3731	&	19	&	1	&$	13967553600=2^6\times 3^3\times 5^2\times 7\times 11\times 13\times 17\times 19	$\\\hline
0.3691	&	3	&	4	&$	41902660800=2^6\times 3^4\times 5^2\times 7\times 11\times 13\times 17\times 19	$\\\hline
0.3626	&	2	&	7	&$	83805321600=2^7\times 3^4\times 5^2\times 7\times 11\times 13\times 17\times 19	$\\\hline
0.3562	&	7	&	2	&$	586637251200=2^7\times 3^4\times 5^2\times 7^2\times 11\times 13\times 17\times 19	$\\\hline
0.3504	&	23	&	1	&$	13492656777600=2^7\times 3^4\times 5^2\times 7^2\times 11\times 13\times 17\times 19\times 23	$\\\hline
0.3263	&	29	&	1	&$	391287046550400=2^7\times 3^4\times 5^2\times 7^2\times 11\times 13\times 17\times 19\times 23\times 29	$\\\hline

 \end{tabular}
 \end{table}
 
 \vfill\pagebreak
 ~
 \begin{table}
 \caption{\label{tab8}The range of $\varepsilon$ and the corresponding $N_{3,\varepsilon}$.}
 \begin{tabular}{|c|l|}
 \hline
 Range of $\varepsilon$	& $N_{3,\varepsilon}$	\\\hline
$	\varepsilon>1.5850	$&$	1	$\\\hline
$	\varepsilon=1.5850	$&$	1, 2	$\\\hline
$	1<\varepsilon<1.5850	$&$	2	$\\\hline
$	\varepsilon=1	$&$	2, 4, 6, 12	$\\\hline
$	0.7370<\varepsilon<1	$&$	12	$\\\hline
$	\varepsilon=0.7370	$&$	12, 24	$\\\hline
$	\hsp 0.6826<\varepsilon<0.7370	\hsp $&$	24	$\\\hline
$	\varepsilon=0.6826	$&$	24, 120	$\\\hline
$	0.6309<\varepsilon<0.6826	$&$	120	$\\\hline
$	\varepsilon=0.6309	$&$	120, 360	$\\\hline
$	0.5850<\varepsilon<0.6309	$&$	360	$\\\hline
$	\varepsilon=0.5850	$&$	360, 720	$\\\hline
$	0.5646<\varepsilon<0.5850	$&$	720	$\\\hline
$	\varepsilon=0.5646	$&$	720, 5040	$\\\hline
$	0.4854<\varepsilon<0.5646	$&$	5040	$\\\hline
$	\varepsilon=0.4854	$&$	5040, 10080	$\\\hline
$	0.4650<\varepsilon<0.4854	$&$	10080	$\\\hline
$	\varepsilon=0.4650	$&$	10080, 30240	$\\\hline
$	0.4582<\varepsilon<0.4650	$&$	30240	$\\\hline
$	\varepsilon=0.4582	$&$	30240, 332640	$\\\hline
$	0.4307<\varepsilon<0.4582	$&$	332640	$\\\hline
$	\varepsilon=0.4307	$&$	332640, 1663200	$\\\hline
$	0.4283<\varepsilon<0.4307	$&$	1663200	$\\\hline
$	\varepsilon=0.4283	$&$	1663200, 21621600	$\\\hline
$	0.4150<\varepsilon<0.4283	$&$	21621600	$\\\hline
$	\varepsilon=0.4150	$&$	21621600, 43243200	$\\\hline
$	0.3878<\varepsilon<0.4150	$&$	43243200	$\\\hline
$	\varepsilon=0.3878	$&$	43243200, 735134400	$\\\hline
$	0.3731<\varepsilon<0.3878	$&$	735134400	$\\\hline
$	\varepsilon=0.3731	$&$	735134400, 13967553600	$\\\hline
$	0.3691<\varepsilon<0.3731	$&$	13967553600	$\\\hline
$	\varepsilon=0.3691	$&$	13967553600, 41902660800	$\\\hline
$	0.3626<\varepsilon<0.3691	$&$	41902660800	$\\\hline
$	\varepsilon=0.3626	$&$	41902660800, 83805321600	$\\\hline
$	0.3562<\varepsilon<0.3626	$&$	83805321600	$\\\hline
$	\varepsilon=0.3562	$&$	83805321600, 586637251200	$\\\hline

\end{tabular}
\end{table}

 \vfill\pagebreak
 In Table \ref{tab9}, we list down the first 20 $\varepsilon$'s in the set $\mathscr{A}(4)$ and the corresponding superior $4$-highly composite numbers $\widetilde{N}_{4,\varepsilon}$. In Table \ref{tab10}, we list down the range of  $\varepsilon$ and the corresponding superior $4$-highly composite numbers $N_{4,\varepsilon}$. 
 
 \begin{table}[h]
 \caption{\label{tab9}The first 20 $\varepsilon$'s in $\mathscr{A}(4)$ and the corresponding $\widetilde{N}_{4,\varepsilon}$.}
 \begin{tabular}{|c|c|c|l|}
 \hline
 $\varepsilon[4,p;m]$	&	$p$	& $m$		 & $\widetilde{N}_{4,\varepsilon}$ \\[1ex]\hline
2	&	2	&	1	&$	2=2	$\\\hline
1.3219	&	2	&	2	&$	4=2^2	$\\\hline
1.2619	&	3	&	1	&$	12=2^2\times 3	$\\\hline
1	&	2	&	3	&$	24=2^3\times 3	$\\\hline
0.8614	&	5	&	1	&$	120=2^3\times 3\times 5	$\\\hline
0.8340	&	3	&	2	&$	360=2^3\times 3^2\times 5	$\\\hline
0.8074	&	2	&	4	&$	720=2^4\times 3^2\times 5	$\\\hline
0.7124	&	7	&	1	&$	5040=2^4\times 3^2\times 5\times 7	$\\\hline
0.6781	&	2	&	5	&$	10080=2^5\times 3^2\times 5\times 7	$\\\hline
0.6309	&	3	&	3	&$	30240=2^5\times 3^3\times 5\times 7	$\\\hline
0.5850	&	2	&	6	&$	60480=2^6\times 3^3\times 5\times 7	$\\\hline
0.5781	&	11	&	1	&$	665280=2^6\times 3^3\times 5\times 7\times 11	$\\\hline
0.5693	&	5	&	2	&$	3326400=2^6\times 3^3\times 5^2\times 7\times 11	$\\\hline
0.5405	&	13	&	1	&$	43243200=2^6\times 3^3\times 5^2\times 7\times 11\times 13	$\\\hline
0.5146	&	2	&	7	&$	86486400=2^7\times 3^3\times 5^2\times 7\times 11\times 13	$\\\hline
0.5094	&	3	&	4	&$	259459200=2^7\times 3^4\times 5^2\times 7\times 11\times 13	$\\\hline
0.4893	&	17	&	1	&$	4410806400=2^7\times 3^4\times 5^2\times 7\times 11\times 13\times 17	$\\\hline
0.4709	&	7	&	2	&$	30875644800=2^7\times 3^4\times 5^2\times 7^2\times 11\times 13\times 17	$\\\hline
0.4708	&	19	&	1	&$	586637251200=2^7\times 3^4\times 5^2\times 7^2\times 11\times 13\times 17\times 19	$\\\hline
0.4594	&	2	&	8	&$	1173274502400=2^8\times 3^4\times 5^2\times 7^2\times 11\times 13\times 17\times 19	\hsp$\\\hline

 \end{tabular}
 \end{table}
 
 \vfill\pagebreak
 ~
 \begin{table}
 \caption{\label{tab10}The range of $\varepsilon$ and the corresponding $N_{4,\varepsilon}$.}
 \begin{tabular}{|c|l|}
 \hline
 Range of $\varepsilon$	& $N_{4,\varepsilon}$	\\\hline
$	 \varepsilon>2	$&$	1	$\\\hline
$	\varepsilon=2	$&$	1, 2	$\\\hline
$	1.3219<\varepsilon<2	$&$	2	$\\\hline
$	\varepsilon=1.3219	$&$	2, 4	$\\\hline
$	1.2619<\varepsilon<1.3219	$&$	4	$\\\hline
$	\varepsilon=1.2619	$&$	4, 12	$\\\hline
$	1<\varepsilon<1.2619	$&$	12	$\\\hline
$	\varepsilon=1	$&$	12, 24	$\\\hline
$	0.8614<\varepsilon<1	$&$	24	$\\\hline
$	\varepsilon=0.8614	$&$	24, 120	$\\\hline
$	0.8340<\varepsilon<0.8614	$&$	120	$\\\hline
$	\varepsilon=0.8340	$&$	120, 360	$\\\hline
$	0.8074<\varepsilon<0.8340	$&$	360	$\\\hline
$	\varepsilon=0.8074	$&$	360, 720	$\\\hline
$	0.7124<\varepsilon<0.8074	$&$	720	$\\\hline
$	\varepsilon=0.7124	$&$	720, 5040	$\\\hline
$	0.6781<\varepsilon<0.7124	$&$	5040	$\\\hline
$	\varepsilon=0.6781	$&$	5040, 10080	$\\\hline
$	0.6309<\varepsilon<0.6781	$&$	10080	$\\\hline
$	\varepsilon=0.6309	$&$	10080, 30240	$\\\hline
$	0.5850<\varepsilon<0.6309	$&$	30240	$\\\hline
$	\varepsilon=0.5850	$&$	30240, 60480	$\\\hline
$	0.5781<\varepsilon<0.5850	$&$	60480	$\\\hline
$	\varepsilon=0.5781	$&$	60480, 665280	$\\\hline
$	0.5693<\varepsilon<0.5781	$&$	665280	$\\\hline
$	\varepsilon=0.5693	$&$	665280, 3326400	$\\\hline
$	0.5405<\varepsilon<0.5693	$&$	3326400	$\\\hline
$	\varepsilon=0.5405	$&$	3326400, 43243200	$\\\hline
$	0.5146<\varepsilon<0.5405	$&$	43243200	$\\\hline
$	\varepsilon=0.5146	$&$	43243200, 86486400	$\\\hline
$	0.5094<\varepsilon<0.5146	$&$	86486400	$\\\hline
$	\varepsilon=0.5094	$&$	86486400, 259459200	$\\\hline
$\hsp	0.4893<\varepsilon<0.5094\hsp	$&$	259459200	$\\\hline
$	\varepsilon=0.4893	$&$	259459200, 4410806400	$\\\hline
$	0.4709<\varepsilon<0.4893	$&$	4410806400	$\\\hline
$	\varepsilon=0.4709	$&$	4410806400, 30875644800	$\\\hline

\end{tabular}
\end{table}

\vfill\pagebreak

 In Table \ref{tab11}, we list down the first 20 $\varepsilon$'s in the set $\mathscr{A}(5)$ and the corresponding superior $5$-highly composite numbers $\widetilde{N}_{5,\varepsilon}$. In Table \ref{tab12}, we list down the range of  $\varepsilon$ and the corresponding superior $5$-highly composite numbers $N_{5,\varepsilon}$. 
 
 \begin{table}[h]
 \caption{\label{tab11}The first 20 $\varepsilon$'s in $\mathscr{A}(5)$ and the corresponding $\widetilde{N}_{5,\varepsilon}$.}
 \begin{tabular}{|c|c|c|l|}
 \hline
 $\varepsilon[5,p;m]$	&	$p$	& $m$		 & $\widetilde{N}_{5,\varepsilon}$ \\[1ex]\hline
2.3219	&	2	&	1	&$	2=2	$\\\hline
1.5850	&	2	&	2	&$	4=2^2	$\\\hline
1.4650	&	3	&	1	&$	12=2^2\times 3	$\\\hline
1.2224	&	2	&	3	&$	24=2^3\times 3	$\\\hline
\multirow{3}{1em}{1}	&	2	&	4	&\multirow{3}{10em}{$	720=2^4\times 3^2\times 5	$}\\ 
 &	3	&	2	& \\ 
 	&	5	&	1	& \\\hline
0.8480	&	2	&	5	&$	1440=2^5\times 3^2\times 5	$\\\hline
0.8271	&	7	&	1	&$	10080=2^5\times 3^2\times 5\times 7	$\\\hline
0.7712	&	3	&	3	&$	30240=2^5\times 3^3\times 5\times 7	$\\\hline
0.7370	&	2	&	6	&$	60480=2^6\times 3^3\times 5\times 7	$\\\hline
0.6826	&	5	&	2	&$	302400=2^6\times 3^3\times 5^2\times 7	$\\\hline
0.6712	&	11	&	1	&$	3326400=2^6\times 3^3\times 5^2\times 7\times 11	$\\\hline
0.6521	&	2	&	7	&$	6652800=2^7\times 3^3\times 5^2\times 7\times 11	$\\\hline
0.6309	&	3	&	4	&$	19958400=2^7\times 3^4\times 5^2\times 7\times 11	$\\\hline
0.6275	&	13	&	1	&$	259459200=2^7\times 3^4\times 5^2\times 7\times 11\times 13	$\\\hline
0.5850	&	2	&	8	&$	518918400=2^8\times 3^4\times 5^2\times 7\times 11\times 13	$\\\hline
0.5681	&	17	&	1	&$	8821612800=2^8\times 3^4\times 5^2\times 7\times 11\times 13\times 17	$\\\hline
0.5646	&	7	&	2	&$	61751289600=2^8\times 3^4\times 5^2\times 7^2\times 11\times 13\times 17	$\\\hline
0.5466	&	19	&	1	&$	1173274502400=2^8\times 3^4\times 5^2\times 7^2\times 11\times 13\times 17\times 19	$\\\hline
0.5350 	&	3	&	5	&$	3519823507200=2^8\times 3^5\times 5^2\times 7^2\times 11\times 13\times 17\times 19	$\\\hline
0.5305 	&	2	&	9	&$	7039647014400=2^9\times 3^5\times 5^2\times 7^2\times 11\times 13\times 17\times 19\hsp	$\\\hline

 \end{tabular}
 \end{table}
 
 \vfill\pagebreak
 
 \begin{table}
 \caption{\label{tab12}The range of $\varepsilon$ and the corresponding $N_{5,\varepsilon}$.}
 \begin{tabular}{|c|l|}
 \hline
 Range of $\varepsilon$	& $N_{5,\varepsilon}$	\\\hline
$	\varepsilon>2.3219	$&$	1	$\\\hline
$	\varepsilon=2.3219	$&$	1, 2	$\\\hline
$	1.5850<\varepsilon<2.3219	$&$	2	$\\\hline
$	\varepsilon=1.5850	$&$	2, 4	$\\\hline
$	1.4650<\varepsilon<1.5850	$&$	4	$\\\hline
$	\varepsilon=1.4650	$&$	4, 12	$\\\hline
$	1.2224<\varepsilon<1.4650	$&$	12	$\\\hline
$	\varepsilon=1.2224	$&$	12, 24	$\\\hline
$	1<\varepsilon<1.2224	$&$	24	$\\\hline
$	\varepsilon=1	$&$	24, 48, 72, 120, 144, 240, 360, 720	$\\\hline
$	0.8480<\varepsilon<1	$&$	720	$\\\hline
$	\varepsilon=0.8480	$&$	720, 1440	$\\\hline
$	0.8271<\varepsilon<0.8480	$&$	1440	$\\\hline
$	\varepsilon=0.8271	$&$	1440, 10080	$\\\hline
$	0.7712<\varepsilon<0.8271	$&$	10080	$\\\hline
$	\varepsilon=0.7712	$&$	10080, 30240	$\\\hline
$	0.7370<\varepsilon<0.7712	$&$	30240	$\\\hline
$	\varepsilon=0.7370	$&$	30240, 60480	$\\\hline
$	0.6826<\varepsilon<0.7370	$&$	60480	$\\\hline
$	\varepsilon=0.6826	$&$	60480, 302400	$\\\hline
$	0.6712<\varepsilon<0.6826	$&$	302400	$\\\hline
$	\varepsilon=0.6712	$&$	302400, 3326400	$\\\hline
$	0.6521<\varepsilon<0.6712	$&$	3326400	$\\\hline
$	\varepsilon=0.6521	$&$	3326400, 6652800	$\\\hline
$\hsp	0.6309<\varepsilon<0.6521\hsp	$&$	6652800	$\\\hline
$	\varepsilon=0.6309	$&$	6652800, 19958400	$\\\hline
$	0.6275<\varepsilon<0.6309	$&$	19958400	$\\\hline
$	\varepsilon=0.6275	$&$	19958400, 259459200	$\\\hline
$	0.5850<\varepsilon<0.6275	$&$	259459200	$\\\hline
$	\varepsilon=0.5850	$&$	259459200, 518918400	$\\\hline
$	0.5681<\varepsilon<0.5850	$&$	518918400	$\\\hline
$	\varepsilon=0.5681	$&$	518918400, 8821612800	$\\\hline
$	0.5646<\varepsilon<0.5681	$&$	8821612800	$\\\hline
$	\varepsilon=0.5646	$&$	8821612800, 61751289600	$\\\hline
$	0.5466<\varepsilon<0.5646	$&$	61751289600	$\\\hline
$	\varepsilon=0.5466	$&$	61751289600, 1173274502400	$\\\hline

\end{tabular}
\end{table}

 The following theorem gives an expression of $\widetilde{N}_{k,\varepsilon}$   in terms of a well known arithmetic function. 
 \begin{theorem}\label{20250805_3}
 Let $\varepsilon$ be a positive number, and let $\widetilde{N}_{k,\varepsilon}$ be the largest superior $k$-highly composite number  associated to $\varepsilon$ defined by \eqref{20250731_2}. For $m\in\mathbb{Z}^+$, let
 \begin{equation}\label{20250805_6}x_{m}=\left(1+\frac{k-1}{m}\right)^{1/\varepsilon}.\end{equation}
 Then
 \begin{equation}\label{20250805_2}\log \widetilde{N}_{k,\varepsilon}=\sum_{m=1}^{\infty}\vartheta\left(x_m\right),\end{equation}
 where $\vartheta(x)$ is the Chebyshev's theta function.
 \end{theorem}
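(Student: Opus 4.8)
The plan is to take logarithms of the defining product \eqref{20250731_2}, rewrite it as a sum over primes weighted by their multiplicities, and then recognize via a layer-cake (Tonelli) rearrangement that summing these multiplicities against $\log p$ is identical to summing $\vartheta$ over the thresholds $x_m$.

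First I would start from \eqref{20250731_2} and pass to logarithms, obtaining
\[\log\widetilde{N}_{k,\varepsilon}=\sum_{p\leq k^{1/\varepsilon}}\widetilde{m}(p,k,\varepsilon)\log p=\sum_{p}\left\lfloor\frac{k-1}{p^{\varepsilon}-1}\right\rfloor\log p,\]
where the last sum may be extended over all primes since $\widetilde{m}(p,k,\varepsilon)=0$ once $p>k^{1/\varepsilon}$. The key step is the equivalence, valid for any prime $p$ and any positive integer $m$,
\[\widetilde{m}(p,k,\varepsilon)\geq m\iff \frac{k-1}{p^{\varepsilon}-1}\geq m\iff p^{\varepsilon}\leq 1+\frac{k-1}{m}\iff p\leq x_m,\]
which uses that $\lfloor y\rfloor\geq m\iff y\geq m$ for integer $m$, that $p^{\varepsilon}-1>0$, and that $t\mapsto t^{1/\varepsilon}$ is increasing. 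Since each multiplicity is a nonnegative integer, the identity $M=\sum_{m\geq 1}\mathbf{1}[M\geq m]$ yields
\[\widetilde{m}(p,k,\varepsilon)=\sum_{m=1}^{\infty}\mathbf{1}\!\left[p\leq x_m\right].\]

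Substituting this into the sum and interchanging the order of summation — legitimate because every term is nonnegative — would then give
\[\log\widetilde{N}_{k,\varepsilon}=\sum_{p}\log p\sum_{m=1}^{\infty}\mathbf{1}[p\leq x_m]=\sum_{m=1}^{\infty}\sum_{p\leq x_m}\log p=\sum_{m=1}^{\infty}\vartheta(x_m),\]
which is precisely \eqref{20250805_2}.

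I do not expect a genuine obstacle here; the only points demanding care are justifying the rearrangement and confirming that the resulting series is in fact a finite sum. For the latter I would observe that $x_m$ decreases to $1$ as $m\to\infty$, so that $x_m<2$ — and hence $\vartheta(x_m)=0$ — as soon as $m>(k-1)/(2^{\varepsilon}-1)$; thus only finitely many terms are nonzero and every convergence question is moot. It is also worth noting that $x_1=k^{1/\varepsilon}$, so the $m=1$ term $\vartheta(k^{1/\varepsilon})$ already accounts for every prime in the original product, consistent with the range $p\leq k^{1/\varepsilon}$ in \eqref{20250731_2}.
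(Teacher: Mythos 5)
Your proposal is correct and takes essentially the same route as the paper: both arguments rest on the threshold equivalence $\widetilde{m}(p,k,\varepsilon)\geq m \iff p\leq x_m$ and then rearrange the double sum $\sum_{p}\widetilde{m}(p,k,\varepsilon)\log p$ into $\sum_{m\geq 1}\vartheta(x_m)$ — the paper groups primes by exponent value and telescopes $\sum_{m}m\bigl(\vartheta(x_m)-\vartheta(x_{m+1})\bigr)$, which is just the Abel-summation form of your layer-cake interchange $\widetilde{m}(p,k,\varepsilon)=\sum_{m\geq 1}\mathbf{1}[p\leq x_m]$. Incidentally, your observation that $x_m$ decreases to $1$ (so $\vartheta(x_m)=0$ once $m>(k-1)/(2^{\varepsilon}-1)$) is the precise statement; the paper loosely says $x_m$ decreases to $0$, but this does not affect either argument.
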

 \begin{proof}
 Notice that since $x_m$ decreases to $0$ as $m\to\infty$, and $\vartheta(x)=0$ for $x<2$, the right hand side of \eqref{20250805_2} is a finite sum. 
 By definition,
 \begin{align*}
 \log\widetilde{N}_{k,\varepsilon}=\sum_{p\leq k^{1/\varepsilon}}\widetilde{m}(p,k,\varepsilon)\log p,
 \end{align*}where
 \[\widetilde{m}(p,k,\varepsilon)=\left\lfloor\frac{k-1}{p^{\varepsilon}-1}\right\rfloor.\]
 Notice that for $m\in\mb{Z}^+$, $\widetilde{m}(p,k,\varepsilon)=m$ if and only if
 \[x_{m+1}=\left(1+\frac{k-1}{m+1}\right)^{1/\varepsilon}<p\leq \left(1+\frac{k-1}{m}\right)^{1/\varepsilon}=x_m.\]
 Therefore,
 \begin{align*}\log \widetilde{N}_{k,\varepsilon}&=\sum_{m=1}^{\infty}\sum_{x_{m+1}<p\leq x_m}m\log p\\
 &= \sum_{m=1}^{\infty}m\Bigl(\vartheta\left(x_m\right)-\vartheta\left(x_{m+1}\right)\Bigr)\\
 &= \sum_{m=1}^{\infty}m \,\vartheta\left(x_m\right)- \sum_{m=1}^{\infty}(m-1) \;\vartheta\left(x_m\right) \\
 &= \sum_{m=1}^{\infty} \vartheta\left(x_m\right).\hfill\qedhere\end{align*}

 \end{proof}
 
  The following theorem gives a lower bound of the superior $k$-highly composite number $\widetilde{N}_{k,\varepsilon}$  associated to $\varepsilon$. 
 \begin{theorem}\label{20250803_1}
 Let $\varepsilon$ be a positive number, and let $\widetilde{N}_{k,\varepsilon}$ be the superior $k$-highly composite number  associated to $\varepsilon$ defined by \eqref{20250731_2}. Then
 \[\log \widetilde{N}_{k,\varepsilon}\geq \psi(k^{1/\varepsilon}),\]
 where $\psi(x)$ is the Chebyshev's psi function.
 \end{theorem}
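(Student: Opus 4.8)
The plan is to combine the two facts already at hand: the representation $\log\widetilde N_{k,\varepsilon}=\sum_{m=1}^\infty\vartheta(x_m)$ from Theorem~\ref{20250805_3}, and the identity $\psi(x)=\sum_{m=1}^\infty\vartheta(x^{1/m})$ recorded in Section~\ref{bounds}. Substituting $x=k^{1/\varepsilon}$ into the latter gives
\[\psi(k^{1/\varepsilon})=\sum_{m=1}^\infty\vartheta\bigl(k^{1/(m\varepsilon)}\bigr)=\sum_{m=1}^\infty\vartheta\bigl((k^{1/m})^{1/\varepsilon}\bigr).\]
Both $\log\widetilde N_{k,\varepsilon}$ and $\psi(k^{1/\varepsilon})$ are thus sums of $\vartheta$ over the \emph{same} index $m$. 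The key observation I would exploit is that these two series can be compared term by term: since $\vartheta$ is nondecreasing and all of its values here are nonnegative, it suffices to show $x_m\geq(k^{1/m})^{1/\varepsilon}$ for every $m\geq 1$.

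Because $t\mapsto t^{1/\varepsilon}$ is increasing on $(0,\infty)$, this term-by-term comparison reduces to the elementary inequality
\[1+\frac{k-1}{m}\geq k^{1/m},\qquad m\geq 1.\]
I would prove it by AM-GM applied to the collection of $m$ numbers consisting of $m-1$ copies of $1$ together with one copy of $k$: their arithmetic mean is $\frac{(m-1)+k}{m}=1+\frac{k-1}{m}$, while their geometric mean is $(1^{m-1}k)^{1/m}=k^{1/m}$, so AM-GM yields precisely the claimed bound. This elementary estimate is the crux of the argument; the genuine insight is recognizing that the two $\vartheta$-sums share the index $m$, so that no rearrangement or injection between the two index sets is needed.

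Putting the pieces together, for each $m$ I obtain $\vartheta(x_m)\geq\vartheta\bigl((k^{1/m})^{1/\varepsilon}\bigr)$, and summing over $m$ yields
\[\log\widetilde N_{k,\varepsilon}=\sum_{m=1}^\infty\vartheta(x_m)\geq\sum_{m=1}^\infty\vartheta\bigl((k^{1/m})^{1/\varepsilon}\bigr)=\psi(k^{1/\varepsilon}),\]
which is the assertion. No convergence issue arises, since both sums are finite: the arguments $x_m$ and $k^{1/(m\varepsilon)}$ decrease to $1$ and fall below $2$ for all large $m$, where $\vartheta$ vanishes. As a sanity check, the two series agree exactly in their first term, because $x_1=k^{1/\varepsilon}=(k^{1/1})^{1/\varepsilon}$, so the inequality quantifies only the surplus contributed by the indices $m\geq 2$.
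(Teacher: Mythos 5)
Your proof is correct and follows essentially the same route as the paper: both rest on the representation $\log\widetilde N_{k,\varepsilon}=\sum_{m=1}^{\infty}\vartheta(x_m)$ from Theorem~\ref{20250805_3} together with the term-by-term comparison $x_m\geq k^{1/(m\varepsilon)}$, summed against $\psi(x)=\sum_{m}\vartheta(x^{1/m})$. The only cosmetic difference is that you prove the key inequality $1+\frac{k-1}{m}\geq k^{1/m}$ by AM--GM, whereas the paper obtains the equivalent statement $\left(1+\frac{k-1}{m}\right)^{m}\geq k$ from Bernoulli's inequality $(1+a)^m\geq 1+am$.
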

 \begin{proof}
 By Theorem \ref{20250805_3},
 \begin{align*}\log  \widetilde{N}_{k,\varepsilon} = \sum_{m=1}^{\infty} \vartheta\left(x_m\right).\end{align*}
 Now for any positive integer $m$ and any positive number $a$, $(1+a)^m\geq 1+am$. Hence,
 \[ \left(1+\frac{k-1}{m}\right)^{m}\geq 1+k-1=k=x_1^{\varepsilon}.\]
 Therefore, 
 \[x_m=\left(1+\frac{k-1}{m}\right)^{1/\varepsilon}\geq x_1^{1/m}.\]
 It follows that 
 \[  \vartheta\left(x_m\right)\geq \vartheta(x_1^{1/m}).\]
 Thus,
 \[\log \widetilde{N}_{k,\varepsilon}\geq \sum_{m=1}^{\infty} \vartheta(x_1^{1/m})=\psi(x_1)=\psi\left(k^{1/\varepsilon}\right). \qedhere\] 
 \end{proof}
For $k\geq 2$, define
\begin{equation}\label{20250720_6}\varepsilon_0(k)=\frac{\di\log\frac{k+1}{2}}{\log 8}.\end{equation}
The following theorem  gives an upper bound to a superior $k$-highly composite number.

\begin{theorem}\label{20250718_1}
  For $\varepsilon>0$, let $N_{k,\varepsilon}$ be a superior $k$-highly composite number  associated with $\varepsilon$. Then 
\[\log N_{k,\varepsilon}\leq c_0(k)k^{1/\varepsilon},\]
where
\begin{equation}\label{20250720_7_2}c_0(k)=\frac{k-1}{   e\di\log\frac{2k}{k+1}}   +\omega_2,\hspace{1cm}\text{with}\;\; \omega_2=1.00001.\end{equation}
 If 
$\varepsilon\leq \varepsilon_0(k)$, 
then we have a better result
\[\log N_{k,\varepsilon}\leq c_1(k)k^{1/\varepsilon},\]
where
\begin{equation}\label{20250720_7}c_1(k)=\frac{k-1}{ 2 e\di\log\frac{2k}{k+1}}   +\omega_2\geq \omega_2>1.\end{equation}
\end{theorem}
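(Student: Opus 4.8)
The plan is to reduce everything to the largest superior $k$-highly composite number and estimate it via the explicit formula \eqref{20250731_2}. Since $\widetilde{N}_{k,\varepsilon}$ is the largest superior $k$-highly composite number associated to $\varepsilon$, any $N_{k,\varepsilon}$ satisfies $\log N_{k,\varepsilon}\leq\log\widetilde{N}_{k,\varepsilon}$, so it suffices to bound the latter. Write $x_1=k^{1/\varepsilon}$ and $x_2=\left(\frac{k+1}{2}\right)^{1/\varepsilon}$, and set $L=\log\frac{2k}{k+1}$, so that $x_2=x_1e^{-L/\varepsilon}$. Because $\widetilde{m}(p,k,\varepsilon)=\left\lfloor\frac{k-1}{p^{\varepsilon}-1}\right\rfloor\geq 1$ exactly when $p\leq x_1$ and $\geq 2$ exactly when $p\leq x_2$, I would first split
\[\log\widetilde{N}_{k,\varepsilon}=\sum_{p\leq x_1}\widetilde{m}(p,k,\varepsilon)\log p=\vartheta(x_1)+\sum_{p\leq x_2}\bigl(\widetilde{m}(p,k,\varepsilon)-1\bigr)\log p.\]
The leading piece $\vartheta(x_1)$ produces the $\omega_2$ in $c_0(k)$ and $c_1(k)$ through Lemma \ref{20250723_3}(iii), namely $\vartheta(x_1)\leq\omega_2 x_1=\omega_2 k^{1/\varepsilon}$.

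For the second sum I would discard the floor and one unit, $\widetilde{m}(p,k,\varepsilon)-1\leq\frac{k-1}{p^{\varepsilon}-1}$, and then apply the elementary inequality $p^{\varepsilon}-1=e^{\varepsilon\log p}-1\geq\varepsilon\log p$. This makes each summand satisfy $(\widetilde{m}(p,k,\varepsilon)-1)\log p\leq\frac{k-1}{\varepsilon}$, whence
\[\sum_{p\leq x_2}\bigl(\widetilde{m}(p,k,\varepsilon)-1\bigr)\log p\leq\frac{k-1}{\varepsilon}\,\pi(x_2).\]
The entire problem is thereby reduced to bounding $\frac{k-1}{\varepsilon}\pi(x_2)$ by $\frac{k-1}{eL}k^{1/\varepsilon}$ (and by half of that in the sharper range).

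The crucial step is converting $\pi(x_2)/\varepsilon$ into a multiple of $k^{1/\varepsilon}$. Using the trivial bound $\pi(x_2)\leq x_2$ together with $x_2=x_1e^{-L/\varepsilon}$ gives $\frac{1}{\varepsilon}\pi(x_2)\leq\frac{x_1}{L}\cdot\frac{L}{\varepsilon}e^{-L/\varepsilon}$. The single-variable function $t\mapsto te^{-t}$ attains its maximum $e^{-1}$ at $t=1$, so taking $t=L/\varepsilon$ yields $\frac{L}{\varepsilon}e^{-L/\varepsilon}\leq e^{-1}$ and hence $\frac{k-1}{\varepsilon}\pi(x_2)\leq\frac{k-1}{eL}k^{1/\varepsilon}$. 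Combining with the bound on $\vartheta(x_1)$ gives $\log\widetilde{N}_{k,\varepsilon}\leq c_0(k)k^{1/\varepsilon}$. For the refined estimate I would note that, directly from the definition \eqref{20250720_6} of $\varepsilon_0(k)$, the hypothesis $\varepsilon\leq\varepsilon_0(k)$ is equivalent to $x_2\geq 8$; Lemma \ref{20250723_3}(ii) then upgrades $\pi(x_2)\leq x_2$ to $\pi(x_2)\leq x_2/2$, and the identical optimization produces the factor-$\tfrac12$ improvement recorded in $c_1(k)$.

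The only subtle point is recognizing that both the constant $e^{-1}$ and the logarithm $\log\frac{2k}{k+1}$ in the statement emerge from optimizing $te^{-t}$ once $x_2$ is written as $x_1e^{-L/\varepsilon}$; everything else is routine. I would also dispose of the trivial range $x_2<2$, where no prime has $\widetilde{m}(p,k,\varepsilon)\geq 2$ so that $\log\widetilde{N}_{k,\varepsilon}=\vartheta(x_1)\leq\omega_2 x_1\leq c_0(k)k^{1/\varepsilon}$, and record the immediate fact that $\pi(x_2)\leq x_2$ holds for all $x_2\geq 0$.
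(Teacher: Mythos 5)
Your proposal is correct and follows essentially the same route as the paper: after reducing to $\widetilde{N}_{k,\varepsilon}$, your decomposition $\vartheta(x_1)+\sum_{p\leq x_2}(\widetilde{m}-1)\log p$ is just a regrouping of the paper's split $\sum_{p\leq x_2}\widetilde{m}\log p+\sum_{x_2<p\leq x_1}\log p$, and both arrive at the identical intermediate bound $\frac{k-1}{\varepsilon}\pi(x_2)+\vartheta(x_1)$ via $p^{\varepsilon}-1\geq\varepsilon\log p$. The remaining steps — the trivial bound $\pi(x_2)\leq x_2$ (upgraded to $x_2/2$ by Lemma \ref{20250723_3}(ii) when $\varepsilon\leq\varepsilon_0(k)$, i.e.\ $x_2\geq 8$), $\vartheta(x_1)\leq\omega_2x_1$, and maximizing $te^{-at}$ at $1/(ae)$ with $a=\log\frac{2k}{k+1}$ — coincide exactly with the paper's proof.
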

\begin{proof} Let 
\[ x_1=k^{1/\varepsilon},\hspace{1cm} x_2=\left(\frac{k+1}{2}\right)^{1/\varepsilon}.\]
Then $1\leq x_2\leq x_1$.
From the proof of Theorem \ref{20250805_3}, we have
\begin{align*}
\log N_{k,\varepsilon}\leq \log\widetilde{N}_{k,\varepsilon} =\sum_{p\leq x_1} \widetilde{m}(p,k,\varepsilon)\log p= \sum_{p\leq x_2}  \widetilde{m}(p,k,\varepsilon)\log p+\sum_{ x_2<p\leq x_1}\log p.
\end{align*}Now by mean value theorem,
\[\widetilde{m}(p,k,\varepsilon)\leq\frac{k-1}{p^{\varepsilon}-1}\leq \frac{k-1}{\varepsilon\log p}.\]
Therefore,
\begin{align*}
\log N_{k,\varepsilon}&\leq  \frac{k-1}{\varepsilon} \sum_{p\leq x_2} 1+\sum_{ x_2<p\leq x_1}\log p\leq \frac{k-1}{\varepsilon} \pi\left(x_2\right)+\vartheta\left(x_1\right).
\end{align*}
We have the trivial bound $\pi(x_2)\leq x_2$. On the other hand,
  Lemma \ref{20250723_3} gives
\[\vartheta\left(x_1\right)\leq \omega_2 x_1, \hspace{1cm}\text{where}\;\;\omega_2=1.00001.\]
Hence,
\[\log N_{k,\varepsilon}\leq k^{\frac{1}{\varepsilon}}\left[\frac{k-1}{ \varepsilon}  \left(\frac{k+1}{2k}\right)^{\frac{1}{\varepsilon}} +\omega_2\right].\]
Let \[t=\frac{1}{\varepsilon}\hspace{1cm}\text{and}\hspace{1cm}a=\log\frac{2k}{k+1}.\] The function $h(t)=te^{-at}$ has a maximum value $1/(ae)$. Therefore, 
\[\log N_{k,\varepsilon}\leq c_0(k)k^{1/\varepsilon},\]with
\[c_0(k)=\frac{k-1}{   e\di\log\frac{2k}{k+1}}   +\omega_2. \] 
If \[\varepsilon\leq  \varepsilon_0(k)=\frac{\di\log\frac{k+1}{2}}{\log 8},\]
\[x_2=\left(\frac{k+1}{2}\right)^{1/\varepsilon}\geq 8.\] Lemma \ref{20250723_3} gives
\[ \pi\left(x_2\right)\leq \frac{x_2}{2}.\]
The result follows. \qedhere

\end{proof}

Notice that
 \begin{equation}\label{20250729_1}\lim_{k\to\infty}\frac{c_1(k)}{k}=\frac{1}{2e \log 2  }=0.2654.\end{equation}

In Table \ref{table1}, we list down the values  of $\varepsilon_0(k)$, $c_0(k)$ and  $c_1(k)$  for $2\leq k\leq 25$. 
\begin{table}[h]
\caption{\label{table1} The values of $\varepsilon_0(k)$, $c_0(k)$ and $c_1(k)$ for $2\leq k\leq 25$.}
\begin{tabular}{|c|c|c|c|}

\hline
$\hsp k\hsp$	&	$ \varepsilon_0(k)$	& 	$c_0(k)$	& 	$c_1(k)$	\\\hline
2	&$\hsp	0.1950\hsp	$&$	\hsp 2.2788	\hsp $&$	\hsp 1.6394\hsp	$\\\hline
3	&$	0.3333	$&$	2.8146	$&$	1.9073	$\\\hline
4	&$	0.4406	$&$	3.3482	$&$	2.1741	$\\\hline
5	&$	0.5283	$&$	3.8807	$&$	2.4403	$\\\hline
6	&$	0.6025	$&$	4.4126	$&$	2.7063	$\\\hline
7	&$	0.6667	$&$	4.9443	$&$	2.9721	$\\\hline
8	&$	0.7233	$&$	5.4757	$&$	3.2379	$\\\hline
9	&$	0.7740	$&$	6.0070	$&$	3.5035	$\\\hline
10	&$	0.8198	$&$	6.5382	$&$	3.7691	$\\\hline
11	&$	0.8617	$&$	7.0693	$&$	4.0346	$\\\hline
12	&$	0.9001	$&$	7.6003	$&$	4.3002	$\\\hline
13	&$	0.9358	$&$	8.1313	$&$	4.5657	$\\\hline
14	&$	0.9690	$&$	8.6623	$&$	4.8311	$\\\hline
15	&$	1.0000	$&$	9.1932	$&$	5.0966	$\\\hline
16	&$	1.0292	$&$	9.7241	$&$	5.3621	$\\\hline
17	&$	1.0566	$&$	10.2550	$&$	5.6275	$\\\hline
18	&$	1.0826	$&$	10.7859	$&$	5.8929	$\\\hline
19	&$	1.1073	$&$	11.3167	$&$	6.1584	$\\\hline
20	&$	1.1308	$&$	11.8476	$&$	6.4238	$\\\hline
21	&$	1.1531	$&$	12.3784	$&$	6.6892	$\\\hline
22	&$	1.1745	$&$	12.9092	$&$	6.9546	$\\\hline
23	&$	1.1950	$&$	13.4401	$&$	7.2200	$\\\hline
24	&$	1.2146	$&$	13.9709	$&$	7.4854	$\\\hline
25	&$	1.2335	$&$	14.5017	$&$	7.7509	$\\\hline

\end{tabular}
\end{table}

 For the value of $d_k(\widetilde{N}_{k,\varepsilon})$, we have the following.
\begin{theorem}
\label{20250805_4}
 Let $\varepsilon$ be a positive number, and let $\widetilde{N}_{k,\varepsilon}$ be the largest superior $k$-highly composite number  associated to $\varepsilon$ defined by \eqref{20250731_2}. For $m\in\mathbb{Z}^+$, let
$x_{m}$ be the number defined by \eqref{20250805_6}. Then
 \begin{equation}\label{20250805_5}\log d_k\left(\widetilde{N}_{k,\varepsilon}\right)=\varepsilon\sum_{m=1}^{\infty}\pi\left(x_m\right)\log x_m,\end{equation}
 where $\pi(x)$ is the  prime counting function.
 \end{theorem}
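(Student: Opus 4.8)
The plan is to mirror the proof of Theorem \ref{20250805_3}, replacing the additive weight $m$ appearing there by the logarithmic contribution of a prime power to $d_k$. First I would use the explicit shape of $\widetilde{N}_{k,\varepsilon}$ from \eqref{20250731_2} together with the multiplicative formula $d_k(n)=\prod_i\binom{m_i+k-1}{k-1}$ to write
\[\log d_k\left(\widetilde{N}_{k,\varepsilon}\right)=\sum_{p\leq x_1}\log\binom{\widetilde{m}(p,k,\varepsilon)+k-1}{k-1},\]
where $x_1=k^{1/\varepsilon}$ and $\widetilde{m}(p,k,\varepsilon)=\left\lfloor (k-1)/(p^{\varepsilon}-1)\right\rfloor$.

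Next, exactly as in the proof of Theorem \ref{20250805_3}, I would group the primes according to the value of $\widetilde{m}(p,k,\varepsilon)$. Since $\widetilde{m}(p,k,\varepsilon)=m$ precisely when $x_{m+1}<p\leq x_m$, this gives
\[\log d_k\left(\widetilde{N}_{k,\varepsilon}\right)=\sum_{m=1}^{\infty}\log\binom{m+k-1}{k-1}\bigl(\pi(x_m)-\pi(x_{m+1})\bigr).\]
This is genuinely a finite sum, because $x_m<2$ for all large $m$, so $\pi(x_m)=0$ eventually.

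The heart of the argument is then a summation by parts. Writing $a_m=\log\binom{m+k-1}{k-1}$, with $a_0=\log\binom{k-1}{k-1}=0$, I would rearrange the telescoped sum into $\sum_{m=1}^{\infty}(a_m-a_{m-1})\pi(x_m)$, where the boundary contribution drops out thanks to $a_0=0$ and the eventual vanishing of $\pi(x_m)$. The key identity is the elementary binomial ratio
\[a_m-a_{m-1}=\log\frac{(m+k-1)!/((k-1)!\,m!)}{(m+k-2)!/((k-1)!\,(m-1)!)}=\log\frac{m+k-1}{m}=\varepsilon\log x_m,\]
the last equality being immediate from the definition \eqref{20250805_6} of $x_m$. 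Substituting this in yields precisely $\varepsilon\sum_{m=1}^{\infty}\pi(x_m)\log x_m$, as asserted.

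I do not expect any serious obstacle here; the result is structurally parallel to Theorem \ref{20250805_3} and amounts to an Abel summation. The only two points meriting care are the vanishing of the boundary term in the summation by parts (secured by $a_0=0$ together with $\pi(x_m)\to 0$) and the verification of the ratio identity $a_m-a_{m-1}=\varepsilon\log x_m$, both of which are routine.
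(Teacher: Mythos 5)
Your proposal is correct and follows essentially the same route as the paper: both start from the multiplicative formula for $d_k$, group primes according to $\widetilde{m}(p,k,\varepsilon)=m$ on the interval $x_{m+1}<p\leq x_m$, and perform the same Abel summation, with your boundary observation $a_0=\log\binom{k-1}{k-1}=0$ matching the paper's implicit index shift using $\log\binom{m+k-2}{k-1}$. The ratio identity $a_m-a_{m-1}=\log\frac{m+k-1}{m}=\varepsilon\log x_m$ is exactly the paper's concluding step, so nothing further is needed.
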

 \begin{proof}As in Theorem \ref{20250805_3}, the right hand side of \eqref{20250805_5} is a finite sum.
 Using the  same reasoning, we have
 \begin{align*}
 \log d_k\left(\widetilde{N}_{k,\varepsilon}\right)&=\sum_{p\leq k^{1/\varepsilon}}\log \binom{\widetilde{m}(p,k,\varepsilon)+k-1}{k-1}\\
 &=\sum_{m=1}^{\infty}\log\binom{m+k-1}{k-1}\sum_{x_{m+1}<p\leq x_m}1\\
 &= \sum_{m=1}^{\infty}\log\binom{m+k-1}{k-1}\Bigl(\pi\left(x_m\right)-\pi\left(x_{m+1}\right)\Bigr)\\
 &= \sum_{m=1}^{\infty}\log\binom{m+k-1}{k-1}\pi\left(x_m\right)- \sum_{m=1}^{\infty}\log\binom{m+k-2}{k-1}\pi\left(x_m\right). \end{align*}
 Since
 \[\log\binom{m+k-1}{k-1}-\log\binom{m+k-2}{k-1}=\log\frac{m+k-1}{m},\]
 we find that
 \begin{align*}
 \log d_k\left(\widetilde{N}_{k,\varepsilon}\right) &= \sum_{m=1}^{\infty} \log\frac{m+k-1}{m}\pi\left(x_m\right)\\
 &=\varepsilon\sum_{m=1}^{\infty}\pi(x_m) \log x_m.\qedhere\end{align*}

 \end{proof}
 
 Before we end this section, let us prove the following theorem which generalizes Lemma 4 in \cite{Nicolas}. It is stated as Lemma 5.3 in \cite{Duras} without proof.
\begin{theorem}\label{20250717_5}
Let $\varepsilon_0$ be a positive number and let $N_0$ be a superior $k$-highly composite number  associated to $\varepsilon_0$. If $\varepsilon_1$ and $N_1$ are positive numbers such that $N_1\geq N_0$ and 
\[\frac{d_k(n)}{n^{\varepsilon_1}}\leq \frac{d_k(N_1)}{N_1^{\varepsilon_1}}\hspace{1cm}\text{for all}\;n\geq N_0,\]
then we have the following.
\begin{enumerate}[(i)]
\item $\varepsilon_1\leq \varepsilon_0$.
\item $N_1$ is a superior $k$-highly composite number associated to $\varepsilon_1$.

\end{enumerate}
\end{theorem}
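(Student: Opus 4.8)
The plan is to prove (i) first and then deduce (ii) from it, since the extension argument underlying (ii) depends crucially on the sign of $\varepsilon_0-\varepsilon_1$. For (i), I would play the two extremal properties against each other. Because $N_0$ is superior $k$-highly composite associated to $\varepsilon_0$, its defining inequality holds for \emph{all} $n\in\mathbb{Z}^+$; taking $n=N_1$ gives $\frac{d_k(N_1)}{N_1^{\varepsilon_0}}\leq\frac{d_k(N_0)}{N_0^{\varepsilon_0}}$, i.e. $\frac{d_k(N_1)}{d_k(N_0)}\leq\bigl(\frac{N_1}{N_0}\bigr)^{\varepsilon_0}$. In the other direction, since $N_0\geq N_0$, the hypothesis on $N_1$ applied at $n=N_0$ gives $\frac{d_k(N_0)}{N_0^{\varepsilon_1}}\leq\frac{d_k(N_1)}{N_1^{\varepsilon_1}}$, i.e. $\bigl(\frac{N_1}{N_0}\bigr)^{\varepsilon_1}\leq\frac{d_k(N_1)}{d_k(N_0)}$. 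Chaining the two yields $\bigl(\frac{N_1}{N_0}\bigr)^{\varepsilon_1}\leq\bigl(\frac{N_1}{N_0}\bigr)^{\varepsilon_0}$. When $N_1>N_0$ we have $\frac{N_1}{N_0}>1$, so taking logarithms and dividing by $\log\frac{N_1}{N_0}>0$ forces $\varepsilon_1\leq\varepsilon_0$.

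For (ii) I would show $\frac{d_k(n)}{n^{\varepsilon_1}}\leq\frac{d_k(N_1)}{N_1^{\varepsilon_1}}$ for every $n\in\mathbb{Z}^+$. For $n\geq N_0$ this is precisely the hypothesis, so the real content is the range $n<N_0$. There I would factor $\frac{d_k(n)}{n^{\varepsilon_1}}=\frac{d_k(n)}{n^{\varepsilon_0}}\cdot n^{\varepsilon_0-\varepsilon_1}$. The first factor is at most $\frac{d_k(N_0)}{N_0^{\varepsilon_0}}$ because $N_0$ is superior $k$-highly composite at $\varepsilon_0$; the second factor is at most $N_0^{\varepsilon_0-\varepsilon_1}$ because $n<N_0$ and, by part (i), $\varepsilon_0-\varepsilon_1\geq 0$. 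Multiplying gives $\frac{d_k(n)}{n^{\varepsilon_1}}\leq\frac{d_k(N_0)}{N_0^{\varepsilon_1}}$, and one final application of the hypothesis at $n=N_0$ yields $\frac{d_k(N_0)}{N_0^{\varepsilon_1}}\leq\frac{d_k(N_1)}{N_1^{\varepsilon_1}}$. Together these dispose of $n<N_0$ and complete (ii).

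The main obstacle is part (i): the entire extension in (ii) hinges on $\varepsilon_1\leq\varepsilon_0$, since only the sign $\varepsilon_0-\varepsilon_1\geq 0$ allows the bound $n^{\varepsilon_0-\varepsilon_1}\leq N_0^{\varepsilon_0-\varepsilon_1}$ for $n<N_0$; if this sign were reversed the argument would collapse. The sandwich argument delivers (i) cleanly whenever $N_1>N_0$, so the delicate point is the borderline case $N_1=N_0$, in which the sandwich degenerates to $\bigl(\tfrac{N_1}{N_0}\bigr)^{\varepsilon_1}\leq\bigl(\tfrac{N_1}{N_0}\bigr)^{\varepsilon_0}$ reading $1\leq 1$ and yielding no information. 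I expect this case to require separate handling (or to be ruled out by the intended reading $N_1>N_0$); indeed one can check that for $k=2$, $N_0=N_1=2$ with $\varepsilon_0$ small the hypothesis is compatible with arbitrarily large $\varepsilon_1$, so some strictness or additional constraint is needed to close the equality case. Once (i) is secured, (ii) is a short and essentially mechanical extension.
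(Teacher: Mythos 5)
Your proof is correct and essentially identical to the paper's: the same sandwich of the two extremal inequalities (the superior property of $N_0$ evaluated at $n=N_1$, and the hypothesis evaluated at $n=N_0$) yields $N_0^{\varepsilon_1-\varepsilon_0}\geq N_1^{\varepsilon_1-\varepsilon_0}$ for (i), and the same factorization $\frac{d_k(n)}{n^{\varepsilon_1}}=\frac{d_k(n)}{n^{\varepsilon_0}}\, n^{\varepsilon_0-\varepsilon_1}$ followed by one application of the hypothesis at $n=N_0$ handles the range $n<N_0$ in (ii). Your remark on the borderline case is also well taken: the paper's proof concludes with ``Since $N_0<N_1$,'' tacitly using strict inequality although the statement assumes only $N_1\geq N_0$, and your example ($k=2$, $N_0=N_1=2$, $\varepsilon_0\in(\log_3 2,\,1]$, $\varepsilon_1$ arbitrarily large) indeed satisfies the hypothesis while violating (i) --- and it violates (ii) as well, since $d_k(1)/1^{\varepsilon_1}=1>d_k(2)/2^{\varepsilon_1}$ for large $\varepsilon_1$ --- so the strict reading $N_1>N_0$ is the correct hypothesis.
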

\begin{proof}
Since $N_0$ is a superior $k$-highly composite number associated to $\varepsilon_0$,
\[\frac{d_k(n)}{n^{\varepsilon_0}}\leq\frac{d_k(N_0)}{N_0^{\varepsilon_0}}\hspace{1cm}\text{for all}\;n\in\mb{Z}^+.\]In particular,
\[\frac{d_k(N_1)}{N_1^{\varepsilon_0}}\leq\frac{d_k(N_0)}{N_0^{\varepsilon_0}}.\]
By assumption, we also have
\[\frac{d_k(N_0)}{N_0^{\varepsilon_1}}\leq  \frac{d_k(N_1)}{N_1^{\varepsilon_1}}.\]
Hence,
\[  \frac{d_k(N_1)}{N_1^{\varepsilon_0}}\leq\frac{d_k(N_0)}{N_0^{\varepsilon_1}}N_0^{\varepsilon_1-\varepsilon_0}\leq \frac{d_k(N_1)}{N_1^{\varepsilon_1}}N_0^{\varepsilon_1-\varepsilon_0}.\]
This gives
\[N_0^{\varepsilon_1-\varepsilon_0}\geq N_1^{\varepsilon_1-\varepsilon_0}.\]
Since $N_0<N_1$, we must have
 $\varepsilon_1\leq\varepsilon_0$. 
It follows that for $n<N_0$,
\[\frac{d_k(n)}{n^{\varepsilon_1}}=\frac{d_k(n)}{n^{\varepsilon_0}}n^{\varepsilon_0-\varepsilon_1}\leq \frac{d_k(N_0)}{N_0^{\varepsilon_0}}N_0^{\varepsilon_0-\varepsilon_1}=\frac{d_k(N_0)}{N_0^{\varepsilon_1}}\leq\frac{d_k(N_1)}{N_1^{\varepsilon_1}}.\]
By assumption, this is also true if $n\geq N_0$. Hence, $N_1$ is a superior highly composite number associated to $\varepsilon_1$.
\qedhere
\end{proof}

 \bigskip
 \section{The Limit Superior and the Existence of Maximum Value}

  As we have discussed before, the function 
 \[f_k(n)=\frac{\log d_k(n)\log\log n}{\log k\log n}\] is bounded above. Hence, $\di \limsup_{n\to\infty}f_k(n)$ exists. 
 It was proved by Wigert \cite{Wigert} that  
\begin{equation}\label{20250802_1}\limsup_{n\to\infty}f_2(n)=\limsup_{n\to\infty}\frac{\log d(n)\log\log n}{\log 2\log n}=1.\end{equation}
Ramanujan gave a different proof in \cite{Ramanujan}. The generalization of \eqref{20250802_1} to any $k\geq 2$ is
\[\limsup_{n\to\infty}f_k(n)=\limsup_{n\to\infty}\frac{\log d_k(n)\log\log n}{\log k\log n}=1.\]
The fact that $\di\limsup_{n\to\infty}f_k(n)\leq 1$ can be inferred by a result of \cite{Heppner} (see also \cite{Duras}). We would like to give an elementary proof following the proof of \cite{Montgomery} for the $k=2$ case.

 For any $\varepsilon>0$, let $N_{k,\varepsilon}$ be a superior $k$-highly composite number associated to $\varepsilon$, and let
\[E(k,\varepsilon)=\frac{d_k(N_{k,\varepsilon})}{N_{k,\varepsilon}^{\varepsilon}}.\]
Then for any $n\in\mb{Z}^+$,
\begin{equation}\label{20250802_7}\frac{d_k(n)}{n^{\varepsilon}}\leq E(k,\varepsilon).\end{equation}
Take $n=1$ gives $E(k,\varepsilon)\geq 1$. We want to first obtain an upper bound of $E(k,\varepsilon)$ in terms of $k$ and $\varepsilon$ when $\varepsilon$ is small.

\begin{theorem}\label{20250802_2}
For $k\geq 2$, there exists a constant $B(k)$ depending only on $k$ such that
\[0\leq \log E(k,\varepsilon)\leq B(k)\varepsilon^2k^{\frac{1}{\varepsilon}}\log\frac{1}{\varepsilon} \hspace{1cm}\text{for all}\;0<\varepsilon<\frac{1}{2}.\]
\end{theorem}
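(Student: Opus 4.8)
The lower bound is immediate: taking $n=1$ in \eqref{20250802_7} gives $E(k,\varepsilon)\geq 1$, hence $\log E(k,\varepsilon)\geq 0$. For the upper bound my plan is to reduce everything to the two explicit formulas already established and then exploit a cancellation. Since every superior $k$-highly composite number associated to $\varepsilon$ realizes the maximum of $g(n)=d_k(n)/n^{\varepsilon}$, I may evaluate $E(k,\varepsilon)$ at the largest one, $\widetilde N_{k,\varepsilon}$, so that
\[\log E(k,\varepsilon)=\log d_k(\widetilde N_{k,\varepsilon})-\varepsilon\log\widetilde N_{k,\varepsilon}.\]
Feeding in Theorem \ref{20250805_4} and Theorem \ref{20250805_3}, and writing $\ell_m=\log\left(1+\frac{k-1}{m}\right)$ so that $\varepsilon\log x_m=\ell_m$, this turns into the finite sum
\[\log E(k,\varepsilon)=\varepsilon\sum_{m=1}^{\infty}\Bigl(\pi(x_m)\log x_m-\vartheta(x_m)\Bigr).\]

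The central step will be to estimate each bracket. By partial summation,
\[\pi(x)\log x-\vartheta(x)=\int_2^x\frac{\pi(t)}{t}\,dt,\]
which is nonnegative and, by Lemma \ref{20250723_3}(i) together with the elementary estimate $\int_2^x\frac{dt}{\log t}\leq c_0\frac{x}{\log x}$ (valid for all $x\geq 2$ with an absolute constant $c_0$), is at most $c_0\omega_1\,x/\log x$. Since $\log x_m=\ell_m/\varepsilon$, substituting yields $\pi(x_m)\log x_m-\vartheta(x_m)\leq c_0\omega_1\,\varepsilon x_m/\ell_m$, and therefore
\[\log E(k,\varepsilon)\leq c_0\omega_1\,\varepsilon^2\sum_{m=1}^{\infty}\frac{x_m}{\ell_m}.\]
I stress that this cancellation is essential: discarding $\vartheta(x_m)$ and bounding $\pi(x_m)\log x_m$ alone would only produce a bound of order $\varepsilon k^{1/\varepsilon}$, which is too weak for the claim. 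It is exactly the extra factor $1/\log x_m=\varepsilon/\ell_m$ arising here that supplies the second power of $\varepsilon$.

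It then remains to show $\sum_{m\geq 1}x_m/\ell_m\leq C(k)\,k^{1/\varepsilon}$ for a constant $C(k)$ and all $\varepsilon\in(0,\tfrac12)$. The term $m=1$ contributes exactly $k^{1/\varepsilon}/\log k$, and the plan is to prove the remaining terms are negligible beside it. The key observation is that for $m\geq 2$,
\[\frac{x_m}{x_1}=\left(\frac{m+k-1}{mk}\right)^{1/\varepsilon}\leq\left(\frac{k+1}{2k}\right)^{1/\varepsilon},\]
a quantity decaying faster than any power of $\varepsilon$ as $\varepsilon\to0$. Since there are at most $(k-1)/(\varepsilon\log 2)$ nonzero terms, each carrying $1/\ell_m\leq 1/(\varepsilon\log 2)$, even a crude bound gives $\sum_{m\geq 2}x_m/\ell_m\leq \frac{k-1}{\varepsilon^2\log^2 2}\bigl(\frac{k+1}{2k}\bigr)^{1/\varepsilon}k^{1/\varepsilon}$, and the superpolynomial decay of $\bigl(\frac{k+1}{2k}\bigr)^{1/\varepsilon}$ swallows the factor $\varepsilon^{-2}$, so the tail contributes at most $C'(k)\,\varepsilon^2 k^{1/\varepsilon}$ to $\log E(k,\varepsilon)$. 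The hard part will be making this tail estimate uniform over the whole interval $(0,\tfrac12)$ rather than merely as $\varepsilon\to0$; I would handle it by noting that $\varepsilon\mapsto\varepsilon^{-2}\bigl(\frac{k+1}{2k}\bigr)^{1/\varepsilon}$ extends continuously to $[0,\tfrac12]$ with value $0$ at $0$, hence is bounded there by a constant depending only on $k$.

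Combining the main term with the tail will give $\log E(k,\varepsilon)\leq C(k)\,\varepsilon^2 k^{1/\varepsilon}$ for all $\varepsilon\in(0,\tfrac12)$. Finally, since $\log\frac1\varepsilon>\log 2$ throughout this interval, setting $B(k)=C(k)/\log 2$ produces the stated inequality $\log E(k,\varepsilon)\leq B(k)\,\varepsilon^2 k^{1/\varepsilon}\log\frac1\varepsilon$; the logarithmic factor is in fact slack, the true order being $\varepsilon^2 k^{1/\varepsilon}$, but keeping it gives extra room to absorb the constants cleanly.
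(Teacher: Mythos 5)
Your proof is correct, but it takes a genuinely different route from the paper's. The paper never forms the exact identity you start from: instead it splits the primes at $x_2=\left(\frac{k+1}{2}\right)^{1/\varepsilon}$, controls the small primes by the crude bound of Lemma \ref{20250723_1}, $d_k(p^{m_p})\leq (m_p+1)^{k-1}$, combined with $m_p+1\leq k/\varepsilon^2$ --- which is the sole source of its factor $\log\frac{1}{\varepsilon}$ --- and invokes the cancellation $\pi(x)\log x-\vartheta(x)=\int_2^x \pi(t)t^{-1}\,dt\leq b_4x/\log x$ only once, for the single sum $\sum_{p\leq x_1}(\log k-\varepsilon\log p)$ coming from the primes of exponent $1$. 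You instead combine Theorems \ref{20250805_3} and \ref{20250805_4} into $\log E(k,\varepsilon)=\varepsilon\sum_m\bigl(\pi(x_m)\log x_m-\vartheta(x_m)\bigr)$ and apply that same integral cancellation to every $m$; this is precisely the refinement the paper itself signposts in the Remark following the theorem but declines to carry out. What your route buys is the sharper conclusion $\log E(k,\varepsilon)\leq C(k)\varepsilon^2k^{1/\varepsilon}$ with no logarithmic factor, the main term $k^{1/\varepsilon}/\log k$ coming from $m=1$ and the tail killed by the superexponential decay of $\left(\frac{k+1}{2k}\right)^{1/\varepsilon}$ (your boundedness claim is made quantitative by $e^{-a/\varepsilon}\leq 2\varepsilon^2/a^2$ with $a=\log\frac{2k}{k+1}$, exactly as in the paper's use of $e^x>x$); what the paper's cruder split buys is independence from the structural formulas of Theorems \ref{20250805_3} and \ref{20250805_4}. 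One point of wording needs repair: as literally stated, ``it remains to show $\sum_{m\geq 1}x_m/\ell_m\leq C(k)k^{1/\varepsilon}$'' is false, since $x_m/\ell_m\sim m/(k-1)\to\infty$ and the full series diverges. But the brackets $\pi(x_m)\log x_m-\vartheta(x_m)$ vanish as soon as $x_m<2$, i.e.\ for $m>(k-1)/(2^\varepsilon-1)$, and your count of ``nonzero terms'' together with the bound $\ell_m\geq\varepsilon\log 2$ already implements exactly this truncation, so the argument is sound once the sum is understood to run only over $m$ with $x_m\geq 2$.
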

\begin{proof}We follow the idea in  \cite{Montgomery} for the $k=2$ case. Since $E(k,\varepsilon)\geq 1$, $\log E(k,\varepsilon)\geq 0$.
For any  $ \varepsilon>0$,  
\[\log E(k,\varepsilon)=\log \frac{d_k( \widetilde{N}_{k,\varepsilon})}{\widetilde{N}_{k,\varepsilon}}.\]
As in the proof of Theorem \ref{20250718_1},  Let 
\[ x_1=k^{1/\varepsilon},\hspace{1cm} x_2=\left(\frac{k+1}{2}\right)^{1/\varepsilon}.\]
Then
\begin{align*}
\log E(k,\varepsilon)=\sum_{p\leq x_2} \log \frac{d_k(p^{m_p})}{p^{ \varepsilon m_p}}+
\sum_{x_2<p\leq x_1} \log \frac{d_k(p )}{p^{ \varepsilon }},
\end{align*}where $m_p=\widetilde{m}(p,k,\varepsilon)$. 
Using   the fact that $p^{\varepsilon m_p}\geq 1$ and the simple bound \eqref{20250802_3}, we have
\[\frac{d_k(p^{m_p})}{p^{\varepsilon m_p}}\leq d_k(p^{m_p})\leq d(p^{m_p})^{k-1}=(m_p+1)^{k-1}.\]  
Therefore,
\begin{equation}\label{20250802_5}\begin{split}
\log E(k,\varepsilon)& \leq (k-1)\sum_{p\leq  x_2}\log(m_p+1)+\sum_{ p\leq x_1}\left(\log k-\varepsilon\log p\right).
\end{split}
\end{equation}
Since $p^{\varepsilon}-1\geq \varepsilon\log p$, 
we find that when $0<\varepsilon<\frac{1}{2}$, 
\[m_p+1\leq\frac{k-1}{p^{\varepsilon}-1}+1\leq\frac{k-1}{\varepsilon\log p}+1\leq\frac{k-1}{\varepsilon\log 2}+1=\frac{k-1+\log 2}{\varepsilon\log 2}\leq\frac{k}{\varepsilon\log 2}\leq\frac{k}{ \varepsilon^2}.\]
Hence,
\[\log(m_p+1)\leq 2\log\frac{1}{\varepsilon}+\log k=\left(2+\frac{\log k}{\log\frac{1}{\varepsilon}}\right)\log\frac{1}{\varepsilon}\leq  b_1\log\frac{1}{\varepsilon},\]where
\[b_1=2+\frac{\log k}{\log 2}.\]
It follows that
\begin{align*}
 (k-1)\sum_{p\leq  x_2}\log(m_p+1)\leq  b_1(k-1)\left(\log\frac{1}{\varepsilon}\right) \pi(x_2).
\end{align*} By
Lemma \ref{20250723_3},
\begin{align*}
\pi(x_2)\leq\frac{\omega_1x_2}{\log x_2}&=\varepsilon\frac{\omega_1 }{\di\log\frac{k+1}{2}}\left(\frac{k+1}{2}\right)^{1/\varepsilon}=b_2\varepsilon k^{1/\varepsilon}\left(\frac{k+1}{2k}\right)^{1/\varepsilon},
\end{align*}where
\[b_2=\frac{\omega_1 }{\di\log\frac{k+1}{2}}.\]
Now,
\[\left(\frac{k+1}{2k}\right)^{1/\varepsilon}=\frac{1}{\di\exp\left(\frac{a}{\varepsilon}\right)},\]where
 \[ a=\log\frac{2k}{k+1}>0.\]
When $x>0$, $e^x>x$. 
Hence,
\[\pi(x_2)\leq \frac{b_2}{a}\varepsilon^2 k^{1/\varepsilon},\] and so
\[ (k-1)\sum_{p\leq  x_2}\log(m_p+1)\leq b_3 \varepsilon^2 k^{ 1/\varepsilon}\log\frac{1}{\varepsilon},\]
where
\[b_3=\frac{b_1b_2(k-1)}{a}=\frac{\di \omega_1(k-1)\left(2+\frac{\log k}{\log 2}\right)}{\di \log\frac{k+1}{2}\log\frac{2k}{k+1}}.\]
For the second term in \eqref{20250802_5}, we have
\begin{align*}
\sum_{ p\leq x_1}\left(\log k-\varepsilon\log p\right)=\pi(x_1)\log k-\varepsilon\vartheta(x_1)=\varepsilon\left(\pi(x_1)\log x_1-\vartheta(x_1)\right).
\end{align*}By an elementary result in analytic number theory (see for example, \cite{Apostol_1}),  there is a constant $b_4$ so that for all $x\geq 2$,
\[0\leq\pi(x)\log x-\vartheta(x)=\int_2^x\frac{\pi(t)}{  t}dt\leq \frac{b_4 x}{\log x}.\]
Now  
\[\frac{x_1}{\log x_1}=\frac{\varepsilon k^{1/\varepsilon}}{\log k}.\]
Therefore,
\begin{align*}\sum_{ p\leq x_1}\left(\log k-\varepsilon\log p\right)\leq b_5\varepsilon^2k^{\frac{1}{\varepsilon}},
\end{align*}
where
\[b_5=\frac{b_4}{\log k}.\]
Collecting the results,  we find that for all $0<\varepsilon<\frac{1}{2}$, 
\[\log E(k,\varepsilon)\leq  B(k)\varepsilon^2k^{\frac{1}{\varepsilon}}\log\frac{1}{\varepsilon}, \]
where
\[B(k)=b_3+\frac{b_5}{\log 2}\] is a constant that depends only on $k$. This completes the proof.
\end{proof}

\begin{remark}
Theorem \ref{20250805_3} and Theorem \ref{20250805_4} give
\begin{align*}\log E(k,\varepsilon)&=\varepsilon\sum_{m=1}^{\lfloor (k-1)/(2^{\varepsilon}-1)\rfloor}\left(\pi(x_m)\log x_m-\vartheta (x_m)\right)\\
& =\varepsilon\sum_{m=1}^{\lfloor (k-1)/(2^{\varepsilon}-1)\rfloor}\int_2^{x_m}\frac{\pi(t)}{t}dt,
\end{align*}where
\[x_m=\left(1+\frac{k-1}{m}\right)^{1/\varepsilon}.\]
One can then use results for the upper bounds of $\pi(x)$ to obtain a better upper bound for $\log E(k,\varepsilon)$. We do not need it for our work here.
\end{remark}
 
Now we can prove the following. 
\begin{theorem}\label{20250802_10}
For $k\geq 2$,
\[\limsup_{n\to\infty}f_k(n)=\limsup_{n\to\infty}\frac{\log d_k(n)\log\log n}{\log k\log n}=1.\]
\end{theorem}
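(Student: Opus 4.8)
The plan is to establish the two inequalities $\limsup_{n\to\infty}f_k(n)\le 1$ and $\limsup_{n\to\infty}f_k(n)\ge 1$ separately.

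For the upper bound I would start from the defining inequality \eqref{20250802_7}, which for every fixed $\varepsilon>0$ and every $n\in\mb{Z}^+$ gives
\[\log d_k(n)\le \varepsilon\log n+\log E(k,\varepsilon).\]
The key observation is that $\varepsilon$ is at our disposal and may be chosen depending on $n$. For a large $n$ I would set
\[\varepsilon=\varepsilon(n)=\frac{\log k}{\log\log n},\]
the value that makes the main term $\varepsilon\log n$ equal to $\log k\,\log n/\log\log n$, which is exactly the size of $\log d_k(n)$ expected when $f_k(n)=1$. This choice has the pleasant feature that $k^{1/\varepsilon}=\log n$, so the bound of Theorem \ref{20250802_2} becomes
\[\log E(k,\varepsilon)\le B(k)\,\frac{(\log k)^2}{(\log\log n)^2}\,\log n\,\log\frac{\log\log n}{\log k}.\]
Dividing the resulting bound on $\log d_k(n)$ by $\log k\,\log n/\log\log n$, the first term contributes exactly $1$, while the contribution of $\log E(k,\varepsilon)$ is $O\!\left(\log\log\log n/\log\log n\right)$, which tends to $0$. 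Hence $\limsup_{n\to\infty}f_k(n)\le 1$.

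For the lower bound I would exhibit an explicit sequence along which $f_k$ tends to $1$, namely the primorials $n_x=\prod_{p\le x}p$. Since every prime divides $n_x$ to the first power and $d_k(p)=k$, multiplicativity gives $d_k(n_x)=k^{\pi(x)}$, while $\log n_x=\vartheta(x)$. Therefore
\[f_k(n_x)=\frac{\pi(x)\log k\cdot\log\vartheta(x)}{\log k\cdot\vartheta(x)}=\frac{\pi(x)\log\vartheta(x)}{\vartheta(x)}.\]
Writing this as $\tfrac{\pi(x)\log x}{x}\cdot\tfrac{\log\vartheta(x)}{\log x}\cdot\tfrac{x}{\vartheta(x)}$ and invoking the prime number theorem in the forms $\pi(x)\log x/x\to 1$ and $\vartheta(x)/x\to 1$ (the latter also yielding $\log\vartheta(x)/\log x\to 1$), each of the three factors tends to $1$. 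Thus $f_k(n_x)\to 1$ as $x\to\infty$, which forces $\limsup_{n\to\infty}f_k(n)\ge 1$.

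The main obstacle is the upper bound. One must check that the admissible range $0<\varepsilon(n)<\tfrac12$ required by Theorem \ref{20250802_2} holds for all sufficiently large $n$, which is immediate since $\log\log n\to\infty$, and, more importantly, that the error term genuinely vanishes. The nontrivial input here is precisely Theorem \ref{20250802_2}: its quadratic factor $\varepsilon^2$ is what cancels the potentially dangerous factor $k^{1/\varepsilon}=\log n$, leaving only the harmless $\log(1/\varepsilon)=O(\log\log\log n)$. By contrast, the lower bound is routine once the prime number theorem is granted.
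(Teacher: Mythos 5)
Your proposal is correct and follows essentially the same route as the paper: the upper bound via the superior $k$-highly composite inequality \eqref{20250802_7} together with Theorem \ref{20250802_2} and the choice $\varepsilon=\log k/\log\log n$ (so that $k^{1/\varepsilon}=\log n$), and the lower bound via primorials with $d_k(n_x)=k^{\pi(x)}$, $\log n_x=\vartheta(x)$, and the prime number theorem. Your explicit three-factor decomposition of $\pi(x)\log\vartheta(x)/\vartheta(x)$ merely spells out a step the paper cites directly, so there is no substantive difference.
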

\begin{proof}
 When $n \geq 3$, 
 \eqref{20250802_7} gives
 \[\log d_k(n)\leq \varepsilon\log n+\log E(k,\varepsilon).\]
By Theorem \ref{20250802_2}, for any $0<\varepsilon<\frac{1}{2}$, 
 \begin{equation}\label{20250802_9}\log d_k(n)\leq \varepsilon\log n+ B(k)\varepsilon^2k^{\frac{1}{\varepsilon}}\log\frac{1}{\varepsilon}.\end{equation}
 When $n>e^{k^2}$,   take
 \[\varepsilon=\frac{\log k}{\log\log n}>0.\]Then $\varepsilon<\frac{1}{2}$, and we have
 \begin{align*}
 \log d_k(n)&\leq \frac{\log k\log n}{\log\log n}+B(k)(\log k)^2\frac{\log n}{(\log\log n)^2}\log \frac{\log\log n}{\log k}.
 \end{align*}Hence,
 \[\frac{\log d_k(n)\log\log n}{\log k\log n}\leq 1+ \frac{B(k)\log k }{\log\log n}\log \frac{\log\log n}{\log k}.\]
 Taking limit superior of both sides, we find that
 \[\limsup_{n\to\infty} \frac{\log d_k(n)\log\log n}{\log k\log n}\leq 1.\]
To complete the proof, we need to show that
\[ \limsup_{n\to\infty} \frac{\log d_k(n)\log\log n}{\log k\log n}\geq 1.\]For this, it is sufficient to show that there exists a sequence $\{N_r\}$ of positive integers so that
\[\lim_{r\to\infty} \frac{\log d_k(N_r)\log\log N_r}{\log k\log N_r}=1.\]
The proof follows exactly the same idea as the $k=2$ case given in \cite{Wigert, Ramanujan}. For $r\geq 1$, let $p_r$ be the $r$-th prime. Consider the number
\[N_r=p_1p_2\cdots p_r,\]which is the product of the first $r$ primes. Then
\[\log N_r=\sum_{i=1}^r\log p_i=\sum_{p\leq p_r}\log p=\vartheta(p_r).\]
On the other hand, $\pi(p_r)=r$.
Now,
\[d_k(N_r)=k^r.\]
Hence,
\[\frac{\log d_k(N_r)\log\log N_r}{\log k\log N_r}=\frac{r\log k\log\log N_r}{\log k\log N_r}= \frac{\pi(p_r)\log\vartheta(p_r)}{\vartheta (p_r)}.\]By the prime number theorem,
\[\lim_{x\to\infty}\frac{\pi(x)\log\vartheta(x)}{\vartheta(x)}=1.\]
Hence,
\[\lim_{r\to\infty} \frac{\log d_k(N_r)\log\log N_r}{\log k\log N_r}=1.\]
 This completes the proof.
\end{proof}
\begin{remark}
The reason to take $\varepsilon=\log k/\log\log n$ in \eqref{20250802_9} is the following. The inequality  \eqref{20250802_9} is valid for all $0<\varepsilon<\frac{1}{2}$. To obtain the best result, we choose $\varepsilon$ so as to minimize the function
\[G(\varepsilon)=\varepsilon\log n+ B(k)\varepsilon^2k^{\frac{1}{\varepsilon}}\log\frac{1}{\varepsilon}.\]
Now
\[G'(\varepsilon)=\log n+B(k)\left(2\varepsilon k^{\frac{1}{\varepsilon}}\log\frac{1}{\varepsilon}-  k^{\frac{1}{\varepsilon}}\log k\log\frac{1}{\varepsilon}
-\varepsilon k^{\frac{1}{\varepsilon}}\right).\]When $n$ is large, we find that $G'(\varepsilon)$ is close to zero when 
\[\varepsilon=\frac{\log k}{\log \log n}.\]When $\varepsilon$ goes to 0, $G'(\varepsilon)<0$. When $\varepsilon\to \infty$, $G'(\varepsilon)>0$. Therefore, when $n$ is large, the minimum of $G(\varepsilon)$ indeed appears in the vicinity of $\log k/\log\log n$. 
\end{remark}

Now to show that $f_k(n)$ has a maximum value, we need the following. 
\begin{lemma}\label{20250731_12}
Let $N_0=2520=2^3\times 3^2\times 5\times 7$. For $k\geq 2$, let
\[a_k=\frac{\log d_k(N_0)}{\log k}.\]
Then the sequence $\{a_k\}$ is increasing. 
 
\end{lemma}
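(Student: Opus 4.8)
The plan is to obtain a closed form for $d_k(N_0)$, then view $a_k$ as the restriction to the integers of a smooth function of a real variable and prove that function is strictly increasing by a single derivative computation. First I would apply the formula $d_k(n)=\prod_i\binom{m_i+k-1}{k-1}$ to $N_0=2^3\cdot 3^2\cdot 5\cdot 7$ to get
\[d_k(N_0)=\binom{k+2}{3}\binom{k+1}{2}k^2=\frac{(k+2)(k+1)^2k^4}{12}.\]
Setting $D(x)=\dfrac{(x+2)(x+1)^2x^4}{12}$ and $A(x)=\dfrac{\log D(x)}{\log x}$ for real $x\geq 2$, we have $a_k=A(k)$, so it suffices to show $A'(x)>0$ on $[2,\infty)$. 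With $g=\log D$ this is equivalent to $\phi(x):=xg'(x)\log x-g(x)>0$, and a direct computation $xg'(x)=\dfrac{x}{x+2}+\dfrac{2x}{x+1}+4=7-\dfrac{2}{x+2}-\dfrac{2}{x+1}$ gives
\[\phi(x)=\log\frac{12x^3}{(x+2)(x+1)^2}-\frac{4x+6}{(x+1)(x+2)}\log x.\]

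The difficulty is a near-cancellation: to leading order $xg'(x)\log x$ and $g(x)$ are both $7\log x$, so term-by-term estimates of $\phi$ discard the decisive positive constant $\log 12$ and genuinely fail for the tight small values $k=2,\dots,7$. To get around this I would clear denominators and study the auxiliary function $F(x)=(x+1)(x+2)\phi(x)$, which has the same sign as $\phi$. The key point is that $F'(x)$ simplifies dramatically: the rational terms produced by differentiating $(x+1)(x+2)$ against the logarithmic derivatives cancel identically (one checks $(x+1)(x+2)\bigl[\tfrac3x-\tfrac1{x+2}-\tfrac2{x+1}\bigr]=4+\tfrac6x$, which is exactly cancelled by the derivative of $-(4x+6)\log x$), leaving
\[F'(x)=(6x+5)\log x-(2x+3)\log\frac{(x+2)(x+1)^2}{12}.\]

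Dividing by $2x+3$ and using $\dfrac{6x+5}{2x+3}=3-\dfrac{4}{2x+3}$, the inequality $F'(x)>0$ reduces to
\[\log 12>\log w(x)+\frac{4\log x}{2x+3},\qquad w(x)=\Bigl(1+\tfrac{2}{x}\Bigr)\Bigl(1+\tfrac1x\Bigr)^2.\]
Since $w$ is decreasing on $[2,\infty)$ we have $\log w(x)\leq\log w(2)=\log\tfrac92$, and $\dfrac{4\log x}{2x+3}<\dfrac{2\log x}{x}\leq\dfrac2e$; hence the right-hand side is below $\log\tfrac92+\tfrac2e$, and the whole estimate collapses to the single clean inequality $\dfrac2e<\log\tfrac83$, valid because $\tfrac2e<\tfrac34$ (i.e.\ $3e>8$) and $\tfrac34<\log\tfrac83$ (i.e.\ $e^{3/4}<\tfrac83$). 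Thus $F'(x)>0$ for all $x\geq 2$, and since $F(2)=22\log 2-12\log 3=\log\tfrac{2^{22}}{3^{12}}>0$ (because $2^{22}=4194304>531441=3^{12}$), we conclude $F(x)>0$, so $\phi(x)>0$ and $A'(x)>0$ on $[2,\infty)$; restricting to integers shows $\{a_k\}$ is strictly increasing.

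The main obstacle is precisely the near-cancellation flagged above: because $a_k\to 7$ and the leading terms of numerator and denominator agree, no naive bound on $\phi$ works uniformly and the cases $k=2,\dots,7$ are tight. The device that resolves it is the passage to $F=(x+1)(x+2)\phi$, where the troublesome rational pieces cancel exactly and the entire question reduces to the elementary comparison $\tfrac2e<\log\tfrac83$.
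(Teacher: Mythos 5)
Your proof is correct; I checked each step. The closed form $d_k(N_0)=\binom{k+2}{3}\binom{k+1}{2}k^2=\frac{k^4(k+1)^2(k+2)}{12}$ is right; the identity $(x+1)(x+2)\bigl[\tfrac3x-\tfrac1{x+2}-\tfrac2{x+1}\bigr]=4+\tfrac6x$ does cancel exactly against $\tfrac{d}{dx}\bigl[(4x+6)\log x\bigr]-4\log x$, so $F'(x)=(6x+5)\log x-(2x+3)\log\frac{(x+2)(x+1)^2}{12}$ as claimed; the reduction of $F'>0$ to $\log 12>\log w(x)+\tfrac{4\log x}{2x+3}$, the bounds $\log w(x)\leq\log\tfrac92$ and $\tfrac{4\log x}{2x+3}<\tfrac{2\log x}{x}\leq\tfrac2e$, the final comparison $\tfrac2e<\tfrac34<\log\tfrac83$, and the endpoint check $F(2)=22\log 2-12\log 3=\log(2^{22}/3^{12})>0$ are all valid, and $F(2)>0$ together with $F'>0$ on $[2,\infty)$ gives strict increase of $a_k$.

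Your route shares the paper's skeleton but resolves the decisive step differently. The paper likewise extends $a_k$ to a real variable, writing $a_k=4+G_1(k)$ with $G_1(t)=\frac{2\log(t+1)+\log(t+2)-\log 12}{\log t}$, and the quantity it must show positive, namely $t\log^2 t\,G_1'(t)=\log 12+2\bigl[\tfrac{t\log t}{t+1}-\log(t+1)\bigr]+\tfrac{t\log t}{t+2}-\log(t+2)$, is literally your $\phi$. The paper then handles the near-cancellation you flag not by clearing denominators, but by noting that $G_2(t)=\tfrac{t\log t}{t+a}-\log(t+a)$ satisfies $G_2'(t)=\tfrac{a\log t}{(t+a)^2}>0$, so $\phi$ is increasing up to the additive constant $\log 12$ and hence minimized at $t=2$, where a single numerical evaluation gives $0.1722>0$. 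So your claim that this kind of estimate ``genuinely fails'' is overstated: monotonicity plus one endpoint value suffices, and the surviving $\log 12$ is exactly what the paper's decomposition isolates. What your version buys is a fully numerics-free argument — the only inputs are the exact inequalities $2/e<\log\tfrac83$ and $2^{22}>3^{12}$ — at the cost of a second layer of calculus (the auxiliary $F=(x+1)(x+2)\phi$ and its derivative); the paper's version is shorter but rests on a decimal computation at $t=2$.
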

\begin{proof}
Note that
\[a_k=\frac{\di \log\frac{k^4(k+1)^2(k+2)}{12}}{\log k}=4+ \frac{2\log(k+1)}{\log k}+\frac{\log (k+2)}{\log k}-\frac{\log 12}{\log k}=4+G_1(k),\]where
\[G_1(t)=\frac{2\log(t+1)}{\log t}+\frac{\log (t+2)}{\log t}-\frac{\log 12}{\log t},\quad t>1.\]
Note that
\begin{align*}
G_1'(t)= \frac{1}{t\log^2 t}\left(\log 12+2\left[\frac{t\log t}{t+1}-\log(t+1)\right]+ \frac{t\log t}{t+2}-\log(t+2) \right).
\end{align*}For any positive number $a$, let
\[G_2(t)=\frac{t\log t}{t+a}-\log(t+a),\quad t>1.\]
Then
\[G_2'(t)= \frac{a\log t}{(t+a)^2}>0.\]
Hence, the function $G_2(t)$ is increasing on $(1,\infty)$. This implies that for $t\geq 2$,
\begin{align*}
&\log 12+2\left[\frac{t\log t}{t+1}-\log(t+1)\right]+ \frac{t\log t}{t+2}-\log(t+2)\\
&\geq \log 12+2\left(\frac{2\log 2}{3}-\log 3\right)+ \frac{2\log 2}{4}-\log 4=0.1722>0.
\end{align*}Hence, when $t\geq 2$, $G_1'(t)>0$. This implies that for any $k\geq 2$, $G(k+1)\geq G(k)$. Hence, the sequence $\{a_k\}$ is increasing.
\end{proof}

 Lemma \ref{20250731_12} gives a lower bound to the value of $\di\lambda(k)=\sup_{n\geq 3}f_k(n)$. The choice of $N_0=2520$ is just for convenience.
\begin{corollary}\label{20250731_14}
Let $N_0=2520$. For any $k\geq 2$, 
\[f_k(N_0)=\frac{\log d_k(N_0)\log\log N_0}{\log k\log  N_0}\geq 1.4677.\]
Thus,
\[\sup_{n \geq 3} f_k(n) \geq 1.4677.\]
\end{corollary}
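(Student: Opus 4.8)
The plan is to exploit the fact that the only part of $f_k(N_0)$ that depends on $k$ is precisely the quantity $a_k$ already controlled by Lemma \ref{20250731_12}, while the remaining factor is a fixed positive constant. Since $N_0=2520$ is fixed, I would first rewrite
\[f_k(N_0)=\frac{\log d_k(N_0)}{\log k}\cdot\frac{\log\log N_0}{\log N_0}=a_k\cdot\frac{\log\log N_0}{\log N_0},\]
where the second factor $c:=\dfrac{\log\log N_0}{\log N_0}$ is a positive constant independent of $k$.

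Next I would invoke Lemma \ref{20250731_12}, which asserts that the sequence $\{a_k\}_{k\geq 2}$ is increasing. Because $c>0$, the product $a_k c=f_k(N_0)$ is then minimized over $k\geq 2$ at the endpoint $k=2$, so that
\[f_k(N_0)\geq a_2\cdot\frac{\log\log N_0}{\log N_0}=f_2(N_0)\qquad\text{for all }k\geq 2.\]
This reduces the entire inequality to a single numerical check at $k=2$.

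Finally I would evaluate $f_2(N_0)$ explicitly. Since $N_0=2^3\times 3^2\times 5\times 7$, we have $d_2(N_0)=4\cdot 3\cdot 2\cdot 2=48$, hence $a_2=\log 48/\log 2$; computing $\log 2520$ and $\log\log 2520$ numerically gives $f_2(N_0)=1.4677\ldots$. Combining this with the previous paragraph yields $f_k(N_0)\geq 1.4677$ for every $k\geq 2$, and since $N_0=2520\geq 3$ we conclude $\sup_{n\geq 3}f_k(n)\geq f_k(N_0)\geq 1.4677$. The only step requiring any care is that this bound is attained at $k=2$, so the numerical value of $f_2(2520)$ must be computed to sufficient precision to confirm that it rounds to $1.4677$; there is no genuine analytic obstacle, as the monotonicity supplied by Lemma \ref{20250731_12} does all the substantive work.
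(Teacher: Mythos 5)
Your proposal is correct and follows essentially the same route as the paper: the paper likewise factors out $\log d_k(N_0)/\log k$, invokes the monotonicity of $a_k$ from Lemma \ref{20250731_12} to reduce to $k=2$, and concludes from the numerical value $f_2(2520)=1.4677$ with $d_2(2520)=48$. Your added remark that the product is minimized at $k=2$ because the factor $\log\log N_0/\log N_0$ is a positive constant is exactly the (implicit) content of the paper's argument.
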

\begin{proof}For $N_0=2520$, $d_2(N_0)=48$.
It is a straightforward computation to get
\[f_2(N_0)=\frac{\log 48\log\log 2520}{\log 2\log 2520}=1.4677.\]
By Lemma \ref{20250731_13},
\[\frac{\log d_k(N_0)}{\log k}\geq \frac{\log d_2(N_0)}{\log 2}.\]
Therefore,
\[ f_k(N_0)=\frac{\log d_k(N_0)\log\log N_0}{\log k\log N_0}\geq \frac{\log d_2(N_0)\log\log N_0}{\log 2\log N_0} =f_2(N_0)=1.4677.\]
\end{proof}

Now we can show that the function $f_k(n)$ indeed has a maximum value.
\begin{theorem}\label{20250803_2}
For $k\geq 2$, the function
\[f_k(n)=\frac{\log d_k(n)\log\log n}{\log k\log n},\quad n \geq 3\] has a maximum value.
\end{theorem}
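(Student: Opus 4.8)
The plan is to combine the two facts already in hand: that $\limsup_{n\to\infty} f_k(n) = 1$ (Theorem \ref{20250802_10}) and that $\sup_{n\geq 3} f_k(n) \geq 1.4677$ (Corollary \ref{20250731_14}). The guiding observation is that the supremum lies strictly above the $\limsup$, so it cannot be approached through large values of $n$; it must therefore be attained on a finite initial segment, where a maximum exists trivially.

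First I would unwind the definition of limit superior. Since $\limsup_{n\to\infty} f_k(n) = 1$, for every $\delta > 0$ there is a positive integer $M$ (depending on $\delta$ and $k$) such that $f_k(n) < 1 + \delta$ for all $n > M$. I would fix any threshold strictly between $1$ and $1.4677$; for concreteness take $\delta = 0.4$, so that there exists $M$ with $f_k(n) < 1.4$ for every $n > M$.

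Next I would bring in the lower bound. By Corollary \ref{20250731_14} we have $\sup_{n\geq 3} f_k(n) \geq 1.4677 > 1.4$. Consequently no integer $n > M$ can realize, or even approach, the supremum, since $f_k(n) < 1.4$ there. It follows that the supremum is unaffected by the tail, and hence
\[ \sup_{n\geq 3} f_k(n) = \max_{3 \leq n \leq M} f_k(n). \]
The right-hand side is the maximum of a finite collection of real numbers, so it is attained at some integer in the range $3 \leq n \leq M$, which is exactly the desired maximizer.

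There is no real analytic obstacle remaining once Theorem \ref{20250802_10} and Corollary \ref{20250731_14} are available; the only point requiring care is the explicit choice of threshold, which merely needs to lie in the open interval $(1, 1.4677)$ so that it separates the $\limsup$ value from the established lower bound for the supremum. The genuine work has already been done in establishing those two results.
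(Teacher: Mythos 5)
Your proposal is correct and is essentially identical to the paper's own proof: both use Theorem \ref{20250802_10} to produce a threshold $N_1$ beyond which $f_k(n)<1.4$, and Corollary \ref{20250731_14} to note that the supremum is at least $1.4677$, so the supremum is attained on the finite initial segment. Your explicit unwinding of the limit superior (choosing $\delta=0.4$) just spells out the step the paper states directly.
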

\begin{proof}
By Corollary \ref{20250731_14}, 
\[\sup_{n \geq 3} f_k(n)\geq 1.4677.\] By Theorem \ref{20250802_10}, there is a positive integer $ N_1$ such that for all $n>N_1$,
\[f_k(n)<1.4.\]
Therefore,
\[\sup_{n \geq 3}f_k(n)=\sup_{2\leq n\leq N_1}f_k(n).\]
Since there are only finitely many integers between 2 and $N_1$, one of them will give the maximum value of $f_k(n)$. 
\end{proof}

\bigskip
\section{Finding the Maximizer}
 In this section, we want to establish an effective algorithm to find
\[\lambda(k)=\max_{n \geq 3}f_k(n)= \max_{n \geq 3}\frac{\log d_k(n)\log\log n}{\log k\log n},\] and an integer $N_{\max}(k)$ such that
\[\lambda(k)=f_k(N_{\max}(k)).\]
We use the same strategy as in \cite{Nicolas}. We will show that the maximizer of $f_k(n)$ must be a superior $k$-highly composite number  associated to some $\varepsilon>0$. We then narrow down the range of $\varepsilon$ to some interval $[\varepsilon_1(k), \varepsilon_2(k)]$, which leaves us with finitely many candidates for the superior $k$-highly composite numbers  to be considered. One can then use a computer algorithm to list down all these candidates and find the maximum value and maximizer of $f_k(n)$ from there.

We start with the following theorem that gives a sufficient condition for a  lower bound  of the maximizer of $f_k(n)$.
\begin{theorem}\label{20250717_3}
Assume that $N$ is  a superior $k$-highly composite number   associated to some positive number $\varepsilon$. If $N\geq 3$ and
\begin{equation}\label{20250723_4}\log d_k(N )- \varepsilon\log N < e^2\varepsilon,\end{equation}
then for $2\leq n< N $, $f_k(n)< f_k(N )$. 
\end{theorem}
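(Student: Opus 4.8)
The plan is to exploit the defining property of $N$ to linearize $\log d_k$, and thereby reduce the desired inequality to the monotonicity of a single explicit one-variable function. Set $C=\log d_k(N)-\varepsilon\log N$. Taking $n=1$ in the defining inequality $d_k(n)/n^{\varepsilon}\le d_k(N)/N^{\varepsilon}$ shows $d_k(N)/N^{\varepsilon}\ge 1$, so $C\ge 0$, while the hypothesis \eqref{20250723_4} is precisely $C<e^2\varepsilon$. For a general $n$, the same defining inequality gives $\log d_k(n)\le\varepsilon\log n+C$, with equality at $n=N$. Since $\log\log n>0$ and $\log k\,\log n>0$ for $n\ge 3$, I would divide by $\log k\,\log n$ and multiply by $\log\log n$ to obtain
\[f_k(n)\le\frac{(\varepsilon\log n+C)\log\log n}{\log k\,\log n}=g(\log n),\]
where, writing $u=\log n$, I define $g(u)=\dfrac{1}{\log k}\left(\varepsilon\log u+\dfrac{C\log u}{u}\right)$. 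Crucially this bound is an \emph{equality} at $n=N$, so $f_k(N)=g(\log N)$.

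With this reduction it suffices to prove that $g$ is strictly increasing on $(1,\infty)$: then for $3\le n<N$ we have $\log n<\log N$, hence $f_k(n)\le g(\log n)<g(\log N)=f_k(N)$. I compute
\[g'(u)=\frac{1}{\log k\,u^2}\bigl(\varepsilon u+C(1-\log u)\bigr),\]
so the sign of $g'(u)$ is that of $\varepsilon u-C(\log u-1)$. When $1<u\le e$ we have $\log u-1\le 0$ and $C\ge 0$, so this quantity is at least $\varepsilon u>0$, and monotonicity is immediate on that range.

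The decisive step is the range $u>e$, where $\log u-1>0$ and the two terms compete; this is exactly where the constant $e^2$ in \eqref{20250723_4} is calibrated. Using $C<e^2\varepsilon$ I estimate $\varepsilon u-C(\log u-1)>\varepsilon\bigl(u-e^2(\log u-1)\bigr)$, and then I would invoke the elementary inequality $u\ge e^2(\log u-1)$, valid for all $u>0$. This follows because $\phi(u)=u-e^2\log u+e^2$ has $\phi'(u)=1-e^2/u$, so $\phi$ attains its global minimum at $u=e^2$, where $\phi(e^2)=e^2-2e^2+e^2=0$; hence $\phi\ge 0$ everywhere. Consequently $g'(u)>0$ for all $u>1$, which establishes strict monotonicity and closes the argument. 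The remaining boundary value $n=2$ is trivial, since there $f_k$ takes a negative value while $f_k(N)\ge 0$.

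I expect the monotonicity of $g$ to be the main obstacle: the whole argument rests on matching the tolerance $C<e^2\varepsilon$ against the sharp bound $\phi(u)\ge 0$ whose minimum value is exactly $0$, attained at $u=e^2$. It is this tight calibration that both forces the appearance of $e^2$ in the hypothesis and makes the positivity of $g'$ hold with no slack to spare, so the verification must be carried out carefully rather than by a crude estimate.
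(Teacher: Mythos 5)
Your proof is correct and takes essentially the same route as the paper: both arguments linearize via the superiority bound $\log d_k(n)\le \varepsilon\log n + C$ (with equality at $n=N$) and then show the resulting majorant of $f_k$ is strictly increasing, with the hypothesis $C<e^2\varepsilon$ supplying exactly the needed slack. Your change of variable $u=\log n$ and the inequality $u\ge e^2(\log u-1)$, with equality at $u=e^2$, is precisely the paper's analysis of $h(t)=\varepsilon t + a_1 t e^{-t}$ in the variable $t=\log\log n$, where $h'$ attains its minimum $h'(2)=\varepsilon-a_1e^{-2}>0$ at $t=2$ (corresponding to $u=e^2$); your explicit treatment of $n=2$, which the paper glosses over, is a welcome touch.
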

\begin{proof}
Since $N$ is a superior highly composite number associated to $\varepsilon$, for all $n\geq 1$,
\begin{equation}\label{20250723_5}\frac{d_k(n)}{n^{ \varepsilon }}\leq\frac{d_k(N)}{N^{ \varepsilon }}.\end{equation}
Hence, 
\begin{equation}\label{20250731_3}\log d_k(n)\leq  \varepsilon  \log n+a_1,\end{equation}where
\[a_1=\log d_k(N)- \varepsilon \log N.\]
Taking $n=1$ in \eqref{20250731_3}, we find that $a_1\geq 0$.
For $n \geq 3$,
\begin{align*}
f_k(n)=\frac{\log d_k(n) \log\log n}{\log k \log n}\leq \frac{1}{\log k}\left( \varepsilon \log\log n +\frac{a_1\log\log n }{\log n}\right)=\frac{h(\log\log n)}{\log k},
\end{align*}
where
\[h(t)= \varepsilon t+a_1te^{-t}.\]
We have
\[h'(t)=  \varepsilon-a_1(t-1)e^{-t},\hspace{1cm}h''(t)= a_1(t-2)e^{-t}.\]Therefore, when $t<2$, $h''(t)\leq 0$; when $t>2$, $h''(t)\geq 0$. This implies that $h'(t)$ is decreasing on $(-\infty, 2]$ and increasing on $[2,\infty)$. Hence, $h'(t)$ has a minimum value at $t=2$, with minimum value 
\[h'(2)=  \varepsilon -a_1e^{-2}=e^{-1}\left(\varepsilon e^2 -\log d_k(N)+ \varepsilon \log N\right).\]  
The assumption \eqref{20250723_4} implies that $h'(2)>0$.
 Thus, $h'(t)> 0$ for all $t$, and so $h(t)$ is a strictly increasing function. Therefore,
for $2\leq n< N$, 
\begin{align*}
f_k(n)&<\frac{h(\log\log N)}{\log k}= \frac{1}{\log k}\left( \varepsilon \log\log N+\frac{a_1\log\log N}{\log N } \right)\\
&=\frac{ \log d_k(N )\log\log N }{ \log k\log N }=f_k(N). \end{align*}
\end{proof}

The following theorem proves the existence of a positive number $\varepsilon$ that satisfies the condition in Theorem \ref{20250717_3}.

\begin{theorem}\label{20250719_1}
There exists a positive number $\varepsilon_2(k)$  and  a superior $k$-highly composite number $N_2(k)$  associated to $\varepsilon_2(k)$, such that   $N_2(k)>e^{e^2}=1618.2$ and
\begin{equation}\label{20250731_7}0\leq \log d_k(N_2(k))- \varepsilon_2(k)\log N_2(k)\leq e^2\varepsilon_2(k).\end{equation}
Any positive integer less than $N_2(k)$ cannot be a maximizer of the function $f_k(n)$.
\end{theorem}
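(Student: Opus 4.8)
The plan is to locate a superior $k$-highly composite number whose ``defect'' $\log d_k(N)-\varepsilon\log N$ is squeezed into the interval $[0, e^2\varepsilon]$ by exploiting how this defect varies as $\varepsilon$ runs through the decreasing sequence $\mathscr{A}(k)$ of jump points. First I would recall from the discussion following Theorem \ref{20250717_2} that as $\varepsilon$ decreases past a value $\varepsilon_\alpha\in\mathscr{A}(k)$, the largest superior $k$-highly composite number $\widetilde N_{k,\varepsilon}$ jumps upward by picking up one additional prime power in each of the finitely many factors $p_{\alpha_i}$ for which $\varepsilon_\alpha\in\mathscr{A}_{p_{\alpha_i}}(k)$. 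For a fixed associated $\varepsilon$, set $D(\varepsilon)=\log d_k(\widetilde N_{k,\varepsilon})-\varepsilon\log\widetilde N_{k,\varepsilon}=\log E(k,\varepsilon)$ in the notation of Section 4; by \eqref{20250802_7} with $n=1$ we have $D(\varepsilon)\geq 0$ always. The strategy is to track $D(\varepsilon)$ as $\varepsilon$ decreases and show it cannot jump over the entire band $[0,e^2\varepsilon]$ in a single step, so some associated $\varepsilon=\varepsilon_2(k)$ lands inside.

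Next I would control the size of a single jump. When $\varepsilon$ decreases from just above $\varepsilon_\alpha$ to $\varepsilon_\alpha$ itself, the new superior number is $\widetilde N_{k,\varepsilon_\alpha}=\widetilde N\cdot\prod_i p_{\alpha_i}^{\,}$ relative to the value $\widetilde N$ on the interval above, and at the shared value $\varepsilon_\alpha=\varepsilon[k,p_{\alpha_i};m_{\alpha_i}]$ we have the exact equality $g(p^{m-1})=g(p^m)$ from the proof of Theorem \ref{20250717_2}. This means that at each jump point the multiplicative factor contributed to $d_k/n^{\varepsilon}$ is exactly $1$ for each prime involved, so $D$ is actually continuous from the viewpoint of $\widetilde N_{k,\varepsilon}$ versus the smaller associated number; the quantity that moves is $\varepsilon\log\widetilde N_{k,\varepsilon}$, and between consecutive jumps $D(\varepsilon)$ varies continuously. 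Concretely, on an open interval $(\varepsilon_\alpha,\varepsilon_\beta)$ where $\widetilde N_{k,\varepsilon}$ is the constant integer $\widetilde N$, we have $D(\varepsilon)=\log d_k(\widetilde N)-\varepsilon\log\widetilde N$, which is continuous and strictly decreasing in $\varepsilon$ with slope $-\log\widetilde N$. Thus as $\varepsilon$ ranges over the associated values in decreasing order, $D$ is a piecewise-linear-in-$\varepsilon$ function whose only possible discontinuities occur where the underlying integer changes.

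I would then combine two facts to trap a good $\varepsilon$. On one hand, by Theorem \ref{20250803_1} we have $\log\widetilde N_{k,\varepsilon}\geq\psi(k^{1/\varepsilon})$, which grows without bound as $\varepsilon\to 0$; hence for small $\varepsilon$ the slope $-\log\widetilde N_{k,\varepsilon}$ is very steep, forcing $D(\varepsilon)$ to decrease rapidly toward its floor of $0$. On the other hand, for large $\varepsilon$ (near $\varepsilon_1[k]=\log_2 k$) the number $\widetilde N_{k,\varepsilon}$ is small and $D$ is small. The point is to start from a small enough $\varepsilon$ so that $\widetilde N_{k,\varepsilon}>e^{e^2}$ and $D(\varepsilon)< e^2\varepsilon$, then increase $\varepsilon$ through the associated values until just before $D$ would exceed $e^2\varepsilon$; since each upward step of $\varepsilon$ through an interval changes $D$ continuously and the band boundary $e^2\varepsilon$ also moves continuously, an intermediate-value argument on the function $D(\varepsilon)-e^2\varepsilon$ yields an associated $\varepsilon_2(k)$ with $0\le D(\varepsilon_2(k))\le e^2\varepsilon_2(k)$. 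The main obstacle, and where I would spend the most care, is verifying that no single jump of the integer $\widetilde N_{k,\varepsilon}$ makes $D$ leap across the whole band at once: one must bound the jump in $D$ at each point of $\mathscr{A}(k)$ by an amount smaller than the band width $e^2\varepsilon$, using that at a jump point the relevant prime powers satisfy the equality $g(p^{m-1})=g(p^m)$ so the discontinuity in $D$ comes only from the finitely many primes $p_{\alpha_i}$ and can be estimated via Lemma \ref{20250723_3} together with the bound $\widetilde m(p,k,\varepsilon)\leq (k-1)/(\varepsilon\log p)$ used in Theorem \ref{20250718_1}. Securing the condition $N_2(k)>e^{e^2}=1618.2$ is then a matter of starting the search at an $\varepsilon$ small enough that Theorem \ref{20250803_1} forces $\log\widetilde N_{k,\varepsilon}>e^2$.
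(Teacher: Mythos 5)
Your proposal takes a genuinely different route from the paper (the paper simply \emph{exhibits} $\varepsilon_2(k)$ in closed form and verifies the inequalities), but as written it contains a fatal orientation error. Set $D(\varepsilon)=\log E(k,\varepsilon)=\max_{n}\bigl(\log d_k(n)-\varepsilon\log n\bigr)$. As a supremum of affine functions of $\varepsilon$ with slopes $-\log n\le 0$, $D$ is convex, continuous and \emph{non-increasing} in $\varepsilon$ — so your worry about jumps at points of $\mathscr{A}(k)$ is a non-issue (the equality $g(p^{m-1})=g(p^m)$ at a jump point indeed makes $D$ continuous there), but your asymptotics are backwards. Since $D(\varepsilon)\ge \log d_k(n)-\varepsilon\log n$ for every fixed $n$ and $\sup_n d_k(n)=\infty$, we have $D(\varepsilon)\to\infty$ as $\varepsilon\to 0^+$, while $D(\varepsilon)=0$ for $\varepsilon\ge \log_2 k$. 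Thus for small $\varepsilon$ the defect is \emph{large} (far above the band $e^2\varepsilon$), not ``decreasing rapidly toward its floor of $0$''; the steep slope $-\log\widetilde N_{k,\varepsilon}$ makes $D$ drop fast as $\varepsilon$ \emph{increases}, not as it decreases. Consequently your search plan — ``start from a small enough $\varepsilon$ so that $\widetilde N_{k,\varepsilon}>e^{e^2}$ and $D(\varepsilon)<e^2\varepsilon$'' — starts from a configuration that does not exist: the band condition $D(\varepsilon)\le e^2\varepsilon$ holds precisely on $[\varepsilon^*,\infty)$, where $\varepsilon^*$ is the unique crossing of the decreasing function $D(\varepsilon)-e^2\varepsilon$, while the size condition $\widetilde N_{k,\varepsilon}>e^{e^2}$ holds only for $\varepsilon$ small. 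The two constraints pull in opposite directions.

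This exposes the real gap: the entire content of the theorem is the quantitative assertion that these two regimes overlap, i.e.\ that $\widetilde N_{k,\varepsilon}>e^{e^2}$ still holds at (in fact strictly past, since the last sentence of the theorem invokes Theorem \ref{20250717_3}, whose hypothesis \eqref{20250723_4} is strict, and at the crossing $\varepsilon^*$ you only get equality $D=e^2\varepsilon$) the point where the band is entered. An intermediate-value argument cannot deliver this; it must be checked numerically or by explicit estimates, and that is exactly what the paper's proof does: it takes $\varepsilon_2(k)=\log_2\frac{k+10}{11}$ for $k\ge 26$ (and $\frac{\log k}{2}$ for $k\le 25$), so that $\widetilde m(2,k,\varepsilon_2(k))=11$ and hence $N_2(k)\ge 2^{11}=2048>e^{e^2}$; shows via the monotonicity of $h(t)=(e^{at}-1)/(e^{bt}-1)$ that for $k\ge 636$ all odd primes drop out, so $N_2(k)=2^{11}$ exactly and the defect equals $\log\binom{k+10}{11}-11\log\frac{k+10}{11}<\log\frac{11^{11}}{11!}<8.88$, comfortably below $e^2\varepsilon_2(k)>43.41$; and verifies $2\le k\le 635$ by direct computation (Table \ref{table2}). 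That the author resorts to a finite computation for $k\le 635$ is itself evidence that no soft argument of the kind you sketch is known to close the overlap; to repair your proof you would need to supply precisely such quantitative bounds on $D$ at the first $\varepsilon$ (moving downward) for which $\widetilde N_{k,\varepsilon}>e^{e^2}$, at which point you have essentially reconstructed the paper's argument.
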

\begin{proof} By Theorem \ref{20250717_3}, if we can find $N_2(k)$ satisfying \eqref{20250731_7}, then for all $2\leq n<N_2(k)$, 
\[f_k(n)<f_k(N_2(k)).\]
This implies that  if $2\leq n< N_2(k)$, $n$ is not a maximizer of the function $f_k$. To prove the theorem, we need to find $\varepsilon_2(k)$ and $N_2(k)$.
 We claim that 
 \begin{equation}\label{20250731_13}\varepsilon_2(k)=\begin{cases} \di\frac{\log k}{2},\quad & 2\leq k\leq 25,\\[2ex]\di\frac{\log\di\left( \frac{k+10}{11}\right)}{\log 2},\quad &  k\geq 26,\end{cases}\end{equation} and
  \begin{equation}\label{20250731_6}N_2(k)=\widetilde{N}_{k,\varepsilon_2(k)}=\prod_{p\leq k^{1/\varepsilon_2(k)}}p^{\widetilde{m}(p,k,\varepsilon_2(k)},\end{equation}
  with
  \[\widetilde{m}(p,k,\varepsilon_2(k)=\left\lfloor\frac{k-1}{p^{\varepsilon_2(k)}-1}\right\rfloor\]satisfies the conditions of the theorem.

  By Theorem \ref{20250717_2}, $N_2(k)$ defined by \eqref{20250731_6} is indeed a superior $k$-highly composite number associated to $\varepsilon_2(k)$. It follows from definition of superior $k$-highly composite numbers that
\[\frac{d_k(N_2(k))}{N_2(k)^{\varepsilon_2(k)}}\geq\frac{d_k(1)}{1^{\varepsilon_2(k)}}=1.\]
Hence, 
  \[ \log d_k(N_2(k))- \varepsilon_2(k)\log N_2(k)\geq 0.\]
 When $k\geq 26$,  $\varepsilon_2(k)$ is defined so that
\[2^{\varepsilon_2(k)}-1=\frac{k-1}{11},\]
and so
\[\widetilde{m}(2,k,\varepsilon_2(k))=11.\]
It follows that 
\[N_2(k)\geq 2^{\widetilde{m}(2,k,\varepsilon_2(k))}=2^{11}=2048>e^{e^2}.\]
  Now we show   analytically that when $k\geq 636$, $\varepsilon_2(k)$ and $N_2(k)$ indeed satisfy     
\[0\leq \log d_k(N_2(k))- \varepsilon_2(k)\log N_2(k)\leq e^2\varepsilon_2(k).\]
Let
\[t=\varepsilon_2(k)=\frac{\di \log \frac{k+10}{11}}{\log 2}, \quad a=\log 2, \quad b=\log 3,\quad c=\log 11.\]
Then
\[\frac{k+10}{11}=e^{at},\]
\[\frac{k-1}{3^{\varepsilon_2(k)}-1}=\frac{k+10-11}{e^{b t}-1}=\frac{11(e^{at }-1)}{e^{bt}-1}=11h(t),\]
where
\[h(t)=\frac{e^{at }-1}{e^{bt}-1}.\]
Now,
\begin{align*}
h'(t)=-\frac{(b-a)e^{(a+b)t}+ae^{at}-be^{bt}}{(e^{bt}-1)^2}.
\end{align*}When $k\geq 636$, 
\[t=\varepsilon_2(k)\geq 5.87,\]
\[e^{at}=\frac{646}{11}>2.71=\frac{\log 3}{\log 3-\log 2}=\frac{b}{b-a}.\]Thus,
\[(b-a)e^{at}>b\] and so 
\[(b-a)e^{(a+b)t}+ae^{at}-be^{bt}\geq e^{bt}\left( (b-a)e^{at}-b\right)>0.\]
Hence, when $k\geq 636$, $h'(t)<0$, and so $h(t)$ is a decreasing function. This implies that when $k\geq 636$,
\begin{equation}\label{20250729_21}\frac{k-1}{3^{\varepsilon_2(k)}-1}\leq\frac{635}{3^{\varepsilon_2(636)}-1}=0.9998<1.\end{equation}
In fact, $k=636$ is the smallest positive integer such that this inequality holds. 
It follows that for  all $p\geq 3$,
\[\frac{k-1}{p^{\varepsilon_2(k)}-1}<1.\]
Thus, for all $p\geq 3$,
\[\widetilde{m}(p,k,\varepsilon_2(k))<1.\]
This implies that
\[N_2(k)=2^{\widetilde{m}(2,k,\varepsilon_2(k))}=2^{11}=2048,\]
and so
\[\log d_k(N_2(k))=\log\binom{k+10}{11}.\]
It follows that
\begin{align*}
  \log d_k(N_2(k))- \varepsilon_2(k)\log N_2(k)&=\log\binom{k+10}{11}-11\log\frac{k+10}{11}\\
&=\log \frac{k(k+1)\cdots (k+10)}{(k+10)^{11}}+\log\frac{11^{11}}{11!}
\\&<\log\frac{11^{11}}{11!}<8.88,
\end{align*}while
\[e^2\varepsilon_2(k)>43.41\]
Hence, for all $k\geq 636$, we have
\[0\leq \log d_k(N_2(k))- \varepsilon_2(k)\log N_2(k)\leq e^2\varepsilon_2(k).\]
For $2\leq k\leq 635$, the assertions can be verified by computing $\varepsilon_2(k)$,  $N_2(k)$ and \begin{equation}\label{20251026_1}  u(k)=\log d_k(N_2(k))- \varepsilon_2(k)\log N_2(k)-e^2\varepsilon_2(k)\end{equation} explicitly. The results for $2\leq k\leq 25$ and $k=635, 636$ are tabled in Table \ref{table2}. This completes the proof.
\end{proof}
\begin{remark}
Theorem \ref{20250719_1} is technical.  We want $N_2(k)> 1618$. A natural candidate for $N_2(k)$ is $\widetilde{N}_{k,\varepsilon_2(k)}$.  If $\widetilde{N}_{k,\varepsilon_2(k)}$ contains $2^{11}$ as a factor, we can guarantee $\widetilde{N}_{k,\varepsilon_2(k)}>1618$. When  $\varepsilon_2(k)=\di\log_2 \left( \frac{k+10}{11}\right)$, $\widetilde{m}(2,k,\varepsilon_2(k))=11$ and so $2^{11}$ is a factor of $\widetilde{N}_{k,\varepsilon_2(k)}$. When $k\geq 636$, we can show that  $\widetilde{N}_{k,\varepsilon_2(k)}=2^{11}$. Using this, we can prove that  \eqref{20250731_7} holds. There are only finitely many  $k$ which is less than 636. It is easier  to check numerically for these cases. We find that when $2\leq k\leq 19$, $\varepsilon_2(k)=\di \log_2\left( \frac{k+10}{11}\right) $ and $N_2(k)=\widetilde{N}_{k,\varepsilon_2(k)}$ do not satisfy \eqref{20250731_7}. After some trial and error, we find (numerically) that $\varepsilon_2(k)=\frac{1}{2}\log k$ and $N_2(k)=\widetilde{N}_{k,\varepsilon_2(k)}$ satisfy $N_2(k)>1618$ and    \eqref{20250731_7}  for $2\leq k\leq 29$.  
\end{remark}

\begin{table}[h]
\caption{\label{table2} The values of $\varepsilon_0(k)$, $\varepsilon_2(k)$, $N_2(k)$ and $u(k)$ (see  \eqref{20251026_1}) for $2\leq k\leq 25$ and $k=635, 636$.}
\begin{tabular}{|c|c|c|c| c|}

\hline
$\hsp k \hsp$	&	$ \varepsilon_0(k) $ &	$ \varepsilon_2(k) $     & $  N_2(k) $ & $  u(k)$\\\hline
2	&$\hsp	0.1950\hsp	$ &$	\hsp 0.3466\hsp    $&$\hsp	2520=2^{3}\times 3^{2}\times 5 \times 7 	\hsp $&$\hsp	-1.4040\hsp	$\\\hline
 
3	&$	0.3333	$&$	0.5493	$&$	5040=2^{4}\times 3^2\times 5\times 7	$&$	-2.0447	$\\\hline
4	&$	0.4406	$&$	0.6931	$&$	5040=2^{4}\times 3^2\times 5\times 7	$&$	-2.4004	$\\\hline
5	&$	0.5283	$&$	0.8047	$&$	10080=2^{5}\times 3^2\times 5\times 7	$&$	-2.6011	$\\\hline
6	&$	0.6025	$&$	0.8959	$&$	10080=2^{5}\times 3^2\times 5\times 7	$&$	-2.7207	$\\\hline
7	&$	0.6667	$&$	0.9730	$&$	60480=2^{6}\times 3^3\times 5\times 7	$&$	-2.7502	$\\\hline
8	&$	0.7233	$&$	1.0397	$&$	60480=2^{6}\times 3^3\times 5\times 7	$&$	-2.7358	$\\\hline
9	&$	0.7740	$&$	1.0986	$&$	120960=2^{7}\times 3^3\times 5\times 7	$&$	-2.7051	$\\\hline
10	&$	0.8198	$&$	1.1513	$&$	120960=2^{7}\times 3^3\times 5\times 7	$&$	-2.6371	$\\\hline
11	&$	0.8617	$&$	1.1989	$&$	120960=2^{7}\times 3^3\times 5\times 7	$&$	-2.5634	$\\\hline
12	&$	0.9001	$&$	1.2425	$&$	241920=2^{8}\times 3^3\times 5\times 7	$&$	-2.4825	$\\\hline
13	&$	0.9358	$&$	1.2825	$&$	241920=2^{8}\times 3^3\times 5\times 7	$&$	-2.3803	$\\\hline
14	&$	0.9690	$&$	1.3195	$&$	241920=2^{8}\times 3^3\times 5\times 7	$&$	-2.2780	$\\\hline
15	&$	1.0000	$&$	1.3540	$&$	725760=2^{8}\times 3^4\times 5\times 7	$&$	-2.1599	$\\\hline
16	&$	1.0292	$&$	1.3863	$&$	1451520=2^{9}\times 3^4\times 5\times 7	$&$	-2.0210	$\\\hline
17	&$	1.0566	$&$	1.4166	$&$	1451520=2^{9}\times 3^4\times 5\times 7	$&$	-1.8843	$\\\hline
18	&$	1.0826	$&$	1.4452	$&$	1451520=2^{9}\times 3^4\times 5\times 7	$&$	-1.7505	$\\\hline
19	&$	1.1073	$&$	1.4722	$&$	2903040=2^{10}\times 3^4\times 5\times 7	$&$	-1.6104	$\\\hline
20	&$	1.1308	$&$	1.4979	$&$	2903040=2^{10}\times 3^4\times 5\times 7	$&$	-1.4650	$\\\hline
21	&$	1.1531	$&$	1.5223	$&$	2903040=2^{10}\times 3^4\times 5\times 7	$&$	-1.3229	$\\\hline
22	&$	1.1745	$&$	1.5455	$&$	2903040=2^{10}\times 3^4\times 5\times 7	$&$	-1.1841	$\\\hline
23	&$	1.1950	$&$	1.5677	$&$	5806080=2^{11}\times 3^4\times 5\times 7	$&$	-1.0365	$\\\hline
24	&$	1.2146	$&$	1.5890	$&$	5806080=2^{11}\times 3^4\times 5\times 7	$&$	-0.8888	$\\\hline
25	&$	1.2335	$&$	1.6094	$&$	5806080=2^{11}\times 3^4\times 5\times 7	$&$	-0.7444	$\\\hline
635	&$	2.7710	$&$	5.8737	$&$	6144=2^{11}\times 3	$&$	-34.6118	$\\\hline
636	&$	2.7717	$&$	5.8760	$&$	2048=2^{11}	$&$	-34.6289	$\\\hline

\end{tabular}
\end{table}

The next lemma  gives a comparison between $\varepsilon_2(k)$ defined by \eqref{20250731_13} to  $\varepsilon_0(k)$   defined by \eqref{20250720_6}.
\begin{lemma}\label{20250809_1}
For $k\geq 2$, let $\varepsilon_0(k)$ be the number defined by \eqref{20250720_6}, and let $\varepsilon_2(k)$ be the number defined by \eqref{20250731_13}. Then $\varepsilon_0(k)\leq\varepsilon_2(k)$.

\end{lemma}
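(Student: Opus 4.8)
The plan is to handle the two branches of the definition \eqref{20250731_13} of $\varepsilon_2(k)$ separately, reducing in each case the transcendental inequality $\varepsilon_0(k)\le\varepsilon_2(k)$ to an elementary monotonicity statement. The structural observation that makes both reductions clean is that $\log 8=3\log 2$, so the denominator $\log 8$ appearing in $\varepsilon_0(k)$ can always be cleared against $\log 2$.

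For $2\le k\le 25$, where $\varepsilon_2(k)=\tfrac12\log k$, I would study
\[F(k)=\frac{\log k}{2}-\frac{\log\frac{k+1}{2}}{\log 8}\]
and show $F(k)\ge 0$. Differentiating gives $F'(k)=\dfrac{1}{2k}-\dfrac{1}{(\log 8)(k+1)}$, and since $(\log 8)(k+1)-2k=(\log 8-2)k+\log 8>0$ for every $k>0$ (using $\log 8=2.079\ldots>2$), we get $F'(k)>0$, so $F$ is increasing on $(1,\infty)$. As $F(2)=\tfrac12\log 2-\dfrac{\log(3/2)}{\log 8}=0.15\ldots>0$, monotonicity yields $F(k)\ge F(2)>0$; this is in fact stronger than required, since it holds for all real $k\ge 2$.

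For $k\ge 26$, where $\varepsilon_2(k)=\log\!\big(\tfrac{k+10}{11}\big)/\log 2$, I would multiply the target through by $\log 8=3\log 2>0$ and exponentiate, thereby rewriting $\varepsilon_0(k)\le\varepsilon_2(k)$ as $\log\tfrac{k+1}{2}\le 3\log\tfrac{k+10}{11}$, i.e. as the purely algebraic inequality
\[\left(\frac{k+10}{11}\right)^{3}\ge\frac{k+1}{2}.\]
Setting $G(k)=\big(\tfrac{k+10}{11}\big)^{3}-\tfrac{k+1}{2}$, one has $G'(k)=\dfrac{3(k+10)^{2}}{11^{3}}-\dfrac12$, which already equals $2.4\ldots>0$ at $k=26$ and increases thereafter; combined with the direct check $G(26)=35.0\ldots-13.5>0$, monotonicity gives $G(k)>0$ for all $k\ge 26$.

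There is no genuine \emph{obstacle} in this lemma: once $\log 8=3\log 2$ is used to eliminate the logarithms, each branch collapses to a single sign-of-derivative check plus one boundary evaluation. The only step that requires a moment's insight—the \emph{main point}, rather than a difficulty—is recognizing that the second case can be turned into the clean cubic comparison $\big(\tfrac{k+10}{11}\big)^3\ge\tfrac{k+1}{2}$; after that, everything is routine.
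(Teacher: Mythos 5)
Your proposal is correct, and for the main case $k\geq 26$ it is essentially identical to the paper's argument: your algebraic reduction to $\left(\frac{k+10}{11}\right)^{3}\geq\frac{k+1}{2}$ is exactly the paper's polynomial $g(u)=2(u+10)^{3}-11^{3}(u+1)$ after dividing by $2\cdot 11^{3}$, and both of you then conclude by a single sign-of-derivative check (the paper verifies $g'(u)=6(u+10)^{2}-1331>0$ for $u>-10+\sqrt{1331/6}\approx 4.89$ together with $g(26)=57375>0$, which matches your $G'(26)=2.4\ldots>0$ and $G(26)>0$). Where you genuinely depart from the paper is the range $2\leq k\leq 25$: the paper disposes of it by direct numerical comparison of the tabulated values of $\varepsilon_0(k)$ and $\varepsilon_2(k)$ (Tables \ref{table1} and \ref{table2}), whereas you prove it analytically, showing $F(k)=\frac{\log k}{2}-\frac{\log\frac{k+1}{2}}{\log 8}$ is increasing because $(\log 8-2)k+\log 8>0$ and then evaluating $F(2)>0$. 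Your computations here check out ($F'(k)=\frac{1}{2k}-\frac{1}{(\log 8)(k+1)}>0$ for all $k>0$ since $\log 8=2.079\ldots>2$, and $F(2)=0.1516\ldots>0$), and your version buys a little more: it establishes the first-branch inequality for all real $k\geq 2$ rather than only the $24$ tabulated integers, and it removes any reliance on table lookups, at the modest cost of one extra derivative computation that the paper avoids by citing data it had already computed.
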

\begin{proof}
For $2\leq k\leq 25$, one can check directly that $\varepsilon_0(k)\leq\varepsilon_2(k)$ from the values of $\varepsilon_0(k)$ and $\varepsilon_2(k)$ listed  in  Table \ref{table2}. For  $k\geq 26$, we consider the polynomial function
\[g(u)=2(u+10)^3-11^3(u+1).\]
We find that
\[g'(u)=6(u+10)^2-1331.\]
Therefore, $g'(u)>0$ when 
\[u>-10+\sqrt{\frac{1331}{6}}=4.89.\]
This implies that for all $u\geq 26$, \[g(u)\geq g(26)=57375>0.\]
Hence,
for $k\geq 26$,
\[\left(\frac{k+10}{11}\right)^3>\frac{k+1}{2}.\]It follows that
\[3\log  \frac{k+10}{11}>\log\frac{k+1}{2}.\]
Therefore,
\[\varepsilon_2(k)=\frac{\di \log  \frac{k+10}{11}}{\log 2}>\frac{\di \log\frac{k+1}{2}}{3\log 2}=\frac{\di \log\frac{k+1}{2}}{ \log 8}=\varepsilon_0(k).\qedhere\]
\end{proof}

 Now we want to find an upper bound of the maximizer of $f_k(n)$.
The next   lemma is Lemma 6 in \cite{Nicolas}.  
\begin{lemma}\label{20250717_7}
Let $b$ be a real number greater than 2, and let $F_b(t)$ be the function defined as
\[F_b(t)=\frac{e^t}{t}\left(1-\frac{b-1}{b^2} t\right).\]
Then for $t\geq 2$,
\[F_b(t)\leq F_b(b)=\frac{e^b}{b^2}.\]
\end{lemma}
 
We also need the following technical lemma.
\begin{lemma}\label{20250720_8}
For $k\geq 2$, let $\varepsilon_1(k)$ be a positive number less than or equal to the number $\varepsilon_0(k)$ given by \eqref{20250720_6}, and let $c_1(k)$ be the number given by \eqref{20250720_7}. If $c_2(k)=\log c_1(k)$, then
\[\frac{\log k}{\varepsilon_1(k)}\geq \frac{c_2(k)-c_2(k)^2}{c_2(k)+1}.\]
\end{lemma}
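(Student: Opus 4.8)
The plan is to decouple the two sides of the inequality and bound each separately by a crude universal constant, exploiting the fact that the asserted inequality is very far from sharp. First I would dispose of $\varepsilon_1(k)$ in favor of $\varepsilon_0(k)$: since the hypothesis gives $0<\varepsilon_1(k)\leq\varepsilon_0(k)$ and $\log k>0$ for $k\geq 2$, we have
\[\frac{\log k}{\varepsilon_1(k)}\geq\frac{\log k}{\varepsilon_0(k)},\]
so it suffices to bound the quantity $\log k/\varepsilon_0(k)$ from below.

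For the left-hand side I would use the explicit form $\varepsilon_0(k)=\dfrac{\log\frac{k+1}{2}}{\log 8}$ from \eqref{20250720_6}. Since $k>1$ forces $\frac{k+1}{2}<k$, we get $0<\log\frac{k+1}{2}<\log k$, whence
\[\frac{\log k}{\varepsilon_0(k)}=\log 8\cdot\frac{\log k}{\log\frac{k+1}{2}}>\log 8.\]
Thus the left-hand side exceeds $\log 8\approx 2.079$ for every $k\geq 2$.

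For the right-hand side I would first confirm that $c_2(k)>0$. By \eqref{20250720_7}, $c_1(k)=\dfrac{k-1}{2e\log\frac{2k}{k+1}}+\omega_2$, and for $k\geq 2$ the fraction is nonnegative while $\omega_2>1$, so $c_1(k)>1$ and therefore $c_2(k)=\log c_1(k)>0$. Writing $c=c_2(k)>0$, I would then estimate the rational expression $\dfrac{c-c^2}{c+1}=\dfrac{c(1-c)}{c+1}$ directly: when $c\geq 1$ the numerator is nonpositive, so the expression is $\leq 0$; when $0<c<1$ the factor $\frac{1-c}{c+1}<1$ gives $\frac{c(1-c)}{c+1}<c<1$. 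In either case the right-hand side is strictly less than $1$ (indeed it never exceeds its maximum $3-2\sqrt{2}\approx 0.172$, though this refinement is not needed).

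Combining the two estimates yields
\[\frac{\log k}{\varepsilon_1(k)}\geq\frac{\log k}{\varepsilon_0(k)}>\log 8>1>\frac{c_2(k)-c_2(k)^2}{c_2(k)+1},\]
which is the claim. I do not anticipate any serious obstacle; the only conceptual point is recognizing that the inequality is slack, so the two sides can be bounded independently — the left by the structural lower bound $\log 8$ inherent in $\varepsilon_0(k)$, and the right by the elementary fact that $\frac{c-c^2}{c+1}$ stays below $1$ for all $c>0$.
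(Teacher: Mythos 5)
Your proof is correct and follows essentially the same route as the paper: both arguments note $c_2(k)>0$, split on whether $c_2(k)\geq 1$ (where the right-hand side is nonpositive) or $0<c_2(k)<1$ (where it is bounded by a constant less than $1$), and bound the left-hand side below via $\varepsilon_1(k)\leq\varepsilon_0(k)$ and the identity $\log k/\varepsilon_0(k)=(\log 8)\log k/\log\frac{k+1}{2}$. The only cosmetic difference is that the paper uses the bound $\frac{1}{4}$ on the right and $1$ on the left, while you use $1$ on the right and $\log 8$ on the left; the substance is identical.
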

\begin{proof}By definition, $c_1(k)>1$ and so $c_2(k)>0$. If $c_2(k)\geq 1$,
\[\frac{c_2(k)-c_2(k)^2}{c_2(k)+1}\leq 0< \frac{\log k}{\varepsilon_1(k)}.\]
If $0<c_2(k)<1$, then 
\[\frac{c_2(k)-c_2(k)^2}{c_2(k)+1}\leq \frac{1}{4},\]but
\[\frac{\log k}{\varepsilon_1(k)}\geq\frac{\log k}{\varepsilon_0(k)}=\frac{(\log 8)(\log k)}{\log \di\frac{k+1}{2}}\geq \log 8>1.\]
These prove the assertion.\qedhere
\end{proof}

 The following theorem is essentially Theorem 1.3 in \cite{Duras} in disguise. It generalizes Lemma 7 in \cite{Nicolas}.
\begin{theorem}\label{20250718_2}
Assume that the function $f_k(n)$ achieves the  maximum value $\lambda(k) $ at the positive integer $N(k)$. Then $N(k)$ is a superior $k$-highly composite number associated to 
\begin{equation}\label{20250724_1}\varepsilon(k)=\lambda(k) \log k\frac{\log\log N(k)-1}{(\log\log N(k))^2}.\end{equation}
\end{theorem}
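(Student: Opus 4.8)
The plan is to verify the defining inequality of superior $k$-highly composite numbers,
\[\frac{d_k(n)}{n^{\varepsilon(k)}}\leq \frac{d_k(N(k))}{N(k)^{\varepsilon(k)}},\]
by recognizing $\varepsilon(k)$ as a tangent slope and exploiting concavity. Write $N=N(k)$ and $\lambda=\lambda(k)$. First I would record that, by Theorem \ref{20250719_1}, the maximizer satisfies $N\geq N_2(k)>e^{e^2}=1618.2$, so that $\log\log N>2$; in particular $\varepsilon(k)>0$ because $\log\log N-1>0$. Since $N$ maximizes $f_k$, we have $f_k(n)\leq\lambda$ for every $n\geq 3$, which rewrites as
\[\log d_k(n)\leq \phi(\log n),\qquad \phi(u)=\lambda\log k\,\frac{u}{\log u},\]
with equality at $n=N$, i.e. $\log d_k(N)=\phi(\log N)$. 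Here $\lambda\log k>0$ (indeed $\lambda\geq 1.4677$ by Corollary \ref{20250731_14}), which will control the sign of $\phi''$ below.

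The key observation is that $\varepsilon(k)$ is exactly the slope of $\phi$ at $u=\log N$. Indeed,
\[\phi'(u)=\lambda\log k\,\frac{\log u-1}{(\log u)^2},\qquad \phi'(\log N)=\lambda\log k\,\frac{\log\log N-1}{(\log\log N)^2}=\varepsilon(k).\]
Next I would show that $\phi$ is concave on the range that matters. A direct computation gives
\[\phi''(u)=\lambda\log k\,\frac{2-\log u}{u(\log u)^3},\]
which is negative for $\log u>2$, i.e. for $u>e^2$. Hence $\psi(u):=\phi(u)-\varepsilon(k)u$ is concave on $(e^2,\infty)$, and since $\psi'(\log N)=\phi'(\log N)-\varepsilon(k)=0$ with $\log N>e^2$, the point $u=\log N$ is the global maximum of $\psi$ on $(e^2,\infty)$. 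Therefore $\psi(u)\leq\psi(\log N)$ for all $u>e^2$. For any $n\geq N_2(k)$ we have $\log n\geq\log N_2(k)>e^2$, so
\[\log d_k(n)-\varepsilon(k)\log n\leq \phi(\log n)-\varepsilon(k)\log n=\psi(\log n)\leq\psi(\log N)=\log d_k(N)-\varepsilon(k)\log N,\]
which is precisely $\di\frac{d_k(n)}{n^{\varepsilon(k)}}\leq \frac{d_k(N)}{N^{\varepsilon(k)}}$ for every $n\geq N_2(k)$.

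Finally I would invoke Theorem \ref{20250717_5} with $N_0=N_2(k)$, $\varepsilon_0=\varepsilon_2(k)$, $N_1=N$ and $\varepsilon_1=\varepsilon(k)$: the hypotheses $N\geq N_2(k)$ together with the inequality just established for all $n\geq N_2(k)$ yield at once that $N$ is a superior $k$-highly composite number associated to $\varepsilon(k)$, completing the proof. The main obstacle is that the bound $\log d_k(n)\leq\phi(\log n)$ is worthless for small $n$, since $\phi(u)\to+\infty$ as $u\to1^{+}$; the device that resolves this is to prove the exponent inequality only on the range $n\geq N_2(k)$ (where $\log n>e^2$ keeps us inside the concave regime, so the tangent-at-$\log N$ argument applies), and to let Theorem \ref{20250717_5} propagate it down to all of $\mb{Z}^+$. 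The only other point needing care is confirming $\log N>e^2$, which is exactly what Theorem \ref{20250719_1} supplies.
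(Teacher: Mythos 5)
Your proposal is correct and follows essentially the same route as the paper: both exploit the maximality bound $\log d_k(n)\le \lambda(k)\log k\,\log n/\log\log n$ (with equality at $N(k)$), reduce the exponent inequality to a one-variable maximization showing the peak occurs at $\log N(k)$, restrict to $n\ge N_2(k)>e^{e^2}$ using Theorem \ref{20250719_1}, and finish by invoking Theorem \ref{20250717_5} with $N_0=N_2(k)$. Your concavity/tangent-slope verification in the variable $u=\log n$ is exactly the paper's Lemma \ref{20250717_7} in disguise (substitute $u=e^t$ with $b=\log\log N(k)$), so the argument matches the paper's in substance.
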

\begin{proof}

By Theorem  \ref{20250719_1}, there exists $\varepsilon_2(k)>0$, and a superior $k$-highly composite number $N_2(k)$ associated to $\varepsilon_2(k)$ such that 
 $ N_2(k)>e^{e^2}$, and any positive integer less than $N_2(k)$ cannot be a maximizer of $f_k(n)$. In other words, $N(k)\geq N_2(k)$. 
 With $\varepsilon(k)$ given by \eqref{20250724_1}, if we can show that for all $n\geq N_2(k)$, 
\[\frac{d_k(n)}{n^{\varepsilon(k)}}\leq\frac{ d_k(N(k))}{N(k)^{\varepsilon(k)}},\]
  Theorem \ref{20250717_5}  will imply that $N(k)$ is a superior $k$-highly composite number associated to $\varepsilon(k)$. This will complete the proof of Theorem \ref{20250718_2}.
  By assumption,
\begin{equation}\label{20250717_6}\lambda(k)=f_k(N(k))=\frac{\log d_k(N(k))\log\log N(k)}{\log k \log N(k)}\geq\frac{\log d_k(n)\log\log n}{\log k \log n}\quad\text{for all}\;n \geq 3.\end{equation}
Now,
\[\log d_k(n)-\varepsilon(k)\log n=T_1+T_2,\]where
\[T_1=\log d_k(n)-\lambda(k) \frac{ \log k\log n}{\log\log n},\hspace{1cm}T_2=\lambda(k) \frac{\log k \log n}{\log\log n}-\varepsilon(k)\log n.\]
 When $n \geq 3$, by \eqref{20250717_6}, 
\[T_1=\log d_k(n)-\lambda(k)\frac{ \log k\log n}{\log\log n}\leq 0.\]
For $T_2$, we have
\[T_2=\lambda(k)(\log k) h(\log\log n),\] where
\begin{align*}
h(t)=\frac{e^t}{t}-\frac{\varepsilon(k)}{\lambda(k)\log k} e^t.
\end{align*}
From \eqref{20250724_1},
\[\frac{\varepsilon(k)}{\lambda(k)\log k}=\frac{\log\log N(k)-1}{(\log\log N(k))^2}=\frac{b-1}{b^2},\]
with $b=\log\log N(k)$. Since $N(k)\geq N_2(k)$ and $N_2(k)>e^{e^2}$, we have $b>2$. By Lemma \ref{20250717_7}, for  $t\geq 2$, 
\[h(t)\leq h(b).\]This implies that for  $n\geq e^{e^2}$, 
\begin{align*}
T_2& \leq \lambda(k) (\log k) h(\log\log N(k))\\
&= \lambda(k) \log k\frac{ \log N(k)}{\log\log N(k)}-\varepsilon(k)\log N(k)\\&=\log d_k(N(k))-\varepsilon(k)\log N(k).
\end{align*}
Therefore, when $n\geq e^{e^2}$,
\[\log d_k(n)-\varepsilon(k)\log n=T_1+T_2\leq \log d_k(N(k))-\varepsilon(k)\log N(k).\]This implies that for all $n\geq N_2(k)$, 
\[\frac{d(n)}{n^{\varepsilon(k)}}\leq\frac{d(N(k))}{N(k)^{\varepsilon(k)}},\]
which completes the proof of Theorem \ref{20250718_2}. \qedhere
\end{proof}

Theorem \ref{20250718_2} says that a maximizer of the function $f_k(n)$ must be a  superior $k$-highly composite number.  
The next theorem gives an upper bound $\lambda_1(k)$ to $f_k(N)$ when $N$ is a superior  $k$-highly composite number associated to $\varepsilon$ that is less than or equal to the number  $\varepsilon_0(k)$ defined by \eqref{20250720_6}. It generalizes Lemma 8 in \cite{Nicolas}.

\begin{theorem}\label{20250718_3}
Let $c_1(k)$ be the number given by \eqref{20250720_7}, and let $\varepsilon_1(k)$ be a number less than or equal to the number $\varepsilon_0(k)$ defined by \eqref{20250720_6}. 
If $0<\varepsilon\leq \varepsilon_1(k)$ and $N$ is a superior $k$-highly composite number  associated to $\varepsilon$, then
\[\frac{ \varepsilon (\log\log N)^2}{\log k(\log\log N-1)}\leq \lambda_1(k)\leq\lambda_0(k),\]
where
\begin{equation}\label{20250806_1}\lambda_0(k)=\frac{  (\log k+\varepsilon_0( k)\log c_1(k))^2}{( \log k )(\log k+[\log c_1(k)-1]\varepsilon_0( k))},\end{equation}
\begin{equation}\label{20250731_8}\lambda_1(k)=\frac{  (\log k+\varepsilon_1( k)\log c_1(k))^2}{( \log k )(\log k+[\log c_1(k)-1]\varepsilon_1( k))}.\end{equation}

\end{theorem}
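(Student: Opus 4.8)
The plan is to collapse the whole statement onto the monotonicity of one auxiliary function of $\varepsilon$, which is exactly where Lemma~\ref{20250720_8} is designed to be used. Write $L=\log k$ and $C=\log c_1(k)$; since $c_1(k)>\omega_2>1$ we have $C>0$. For $\varepsilon\in(0,\varepsilon_0(k)]$ set
\[\Lambda(\varepsilon)=\frac{(L+C\varepsilon)^2}{L\,(L+(C-1)\varepsilon)},\]
so that by \eqref{20250806_1} and \eqref{20250731_8} we have $\lambda_0(k)=\Lambda(\varepsilon_0(k))$ and $\lambda_1(k)=\Lambda(\varepsilon_1(k))$. The theorem then reduces to two claims: (i) the left-hand quantity is at most $\Lambda(\varepsilon)$, and (ii) $\Lambda$ is nondecreasing on $(0,\varepsilon_0(k)]$; given these, $\tfrac{\varepsilon(\log\log N)^2}{\log k(\log\log N-1)}\le\Lambda(\varepsilon)\le\Lambda(\varepsilon_1(k))=\lambda_1(k)\le\Lambda(\varepsilon_0(k))=\lambda_0(k)$.

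First I would extract the two facts I need about $N$. Since $N$ is a superior $k$-highly composite number associated to $\varepsilon\le\varepsilon_1(k)\le\varepsilon_0(k)$, Theorem~\ref{20250718_1} gives $\log N\le c_1(k)k^{1/\varepsilon}$; writing $y=\log\log N$ this reads $y\le C+L/\varepsilon=:Y$. I also need $y>2$, i.e. $N>e^{e^2}$. This I would get by comparing $N$ with the number $N_2(k)$ of Theorem~\ref{20250719_1}: cross-multiplying the defining inequalities of $N$ (at $n=N_2(k)$) and of $N_2(k)$ (at $n=N$) yields $(N/N_2(k))^{\varepsilon}\le(N/N_2(k))^{\varepsilon_2(k)}$, and because $\varepsilon\le\varepsilon_0(k)<\varepsilon_2(k)$ (the inequality of Lemma~\ref{20250809_1} is in fact strict) this forces $N\ge N_2(k)>e^{e^2}$, so $y>2$.

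The algebraic heart is the identity linking the target and $\Lambda$. Put $g(t)=t^2/(t-1)$, so $g'(t)=t(t-2)/(t-1)^2\ge0$ for $t\ge2$ and $g$ is increasing on $[2,\infty)$. Using $\varepsilon Y=L+C\varepsilon$ and $\varepsilon(Y-1)=L+(C-1)\varepsilon$, a direct substitution gives $\tfrac{\varepsilon}{L}g(Y)=\Lambda(\varepsilon)$, while the left-hand side of the theorem is exactly $\tfrac{\varepsilon}{L}g(y)$. Since $2\le y\le Y$ and $g$ increases on $[2,\infty)$, $\tfrac{\varepsilon}{L}g(y)\le\tfrac{\varepsilon}{L}g(Y)=\Lambda(\varepsilon)$, which is (i). For (ii) I would differentiate to get
\[\Lambda'(\varepsilon)=\frac{(L+C\varepsilon)\big[(C+1)L+C(C-1)\varepsilon\big]}{L\,(L+(C-1)\varepsilon)^2},\]
so the sign of $\Lambda'$ is that of $(C+1)L+C(C-1)\varepsilon$. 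When $C\ge1$ this bracket is positive outright; when $0<C<1$ it is exactly Lemma~\ref{20250720_8} that saves the day, since $L/\varepsilon_1(k)\ge(C-C^2)/(C+1)$ rearranges to $\varepsilon_1(k)\le(C+1)L/(C(1-C))$, keeping the bracket nonnegative for all $\varepsilon\le\varepsilon_1(k)$ (and, applying the lemma with $\varepsilon_0(k)$ in place of $\varepsilon_1(k)$, for all $\varepsilon\le\varepsilon_0(k)$). Hence $\Lambda$ is nondecreasing on $(0,\varepsilon_0(k)]$, completing (ii).

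The main obstacle is precisely the sign analysis in (ii) for small $k$, where $0<C<1$ (e.g.\ $k=2$, $C\approx0.49$): there $\Lambda$ need not be monotone a priori, and without the bound packaged in Lemma~\ref{20250720_8} the reduction fails. A secondary point requiring care is the uniform lower bound $y>2$; this is why I would compare $N$ directly with $N_2(k)$ rather than rely on the weaker estimate $\log\widetilde N_{k,\varepsilon}\ge\psi(k^{1/\varepsilon})$ of Theorem~\ref{20250803_1}, which degrades near $\varepsilon=\varepsilon_0(k)$ for large $k$.
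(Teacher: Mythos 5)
Your proof is correct and is essentially the paper's own argument in different coordinates: your $\Lambda(\varepsilon)$ is exactly the paper's $h_2(t)$ under $t=\log k/\varepsilon$, your $g$ is its $h_1$, and you invoke the same three ingredients (Theorem \ref{20250718_1} with the $c_1(k)$ bound, Theorem \ref{20250719_1} to get $\log\log N>2$, and Lemma \ref{20250720_8} for the sign analysis). The only addition is that you spell out, via the cross-multiplication argument and the strictness $\varepsilon\le\varepsilon_0(k)<\varepsilon_2(k)$, why $N\geq N_2(k)$ — a step the paper merely asserts.
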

\begin{proof}
Let $x_0=k^{1/\varepsilon_0(k)}$,  $x_1=k^{1/\varepsilon_1(k)}$ and $x=k^{1/\varepsilon}$. Then $x_0\leq x_1\leq x$,
\[\log x_0=\frac{\log k}{\varepsilon_0(k)},\hspace{1cm}\log x_1=\frac{\log k}{\varepsilon_1(k)}\hspace{1cm}\text{and}\hspace{1cm}\varepsilon=\di\frac{\log k}{\log x}.\] 
By Theorem \ref{20250718_1}, 
\begin{equation}\label{20250720_5}\log\log N \leq \log x+c,\end{equation}
where\[c=\log c_1(k)>0.\]
Consider the function
\[h_1(t)=\frac{t^2}{t-1},\quad t>1.\]
Since 
\[h_1'(t)= \frac{t(t-2)}{(t-1)^2},\]
we find that $h_1'(t)>0$ when $t>2$.  Hence,
$h_1(t)$ is increasing when $t\geq 2$.   
Let $\varepsilon_2(k)$ and $N_2(k)$ be numbers satisfying the conditions in Theorem \ref{20250719_1}. Then
\[N\geq N_2(k)>e^{e^2}.\]
It follows that $\log\log N >2$. 
Eq. \eqref{20250720_5}  then implies that
\[\frac{\varepsilon(\log\log N )^2}{\log k(\log\log N -1)}= \frac{(\log\log N )^2}{(\log x)(\log\log N -1)}\leq  \frac{(\log x+c)^2}{\log x(\log x+  c-1)}= h_2(\log x),\]
where
\[h_2(t)=\frac{(t+c)^2}{t(t+c-1)}.\]
A straightforward computation gives
\begin{align*}
h_2'(t)=-\frac{(t+c)\left([c+1]t-[c-c^2]\right)}{t^2(t+c-1)^2}.
\end{align*}
This implies that when 
\[t>\frac{c-c^2}{c+1},\]
$h_2(t)$ is decreasing. 
By Lemma \ref{20250720_8},
\[\log x_0=\frac{\log k}{\varepsilon_0(k)}>\frac{c-c^2}{c+1}.\]

Since $  x\geq x_1\geq x_0$, we have
\[  \frac{\varepsilon(\log\log N )^2}{\log k(\log\log N -1)}\leq  h_2(\log x)\leq  h_2(\log x_1)\leq  h_2(\log x_0).\]
This shows that
\[ \frac{\varepsilon(\log\log N )^2}{\log k(\log\log N -1)}\leq \lambda_1(k)\leq\lambda_0(k),\]
where
\[\lambda_0(k)=h_2(\log x_0)=\frac{ (\log x_0+c)^2}{(\log x_0)(\log x_0+c-1)}=\frac{  (\log k+\varepsilon_0( k)\log c_1(k))^2}{ (\log k)(\log k+[\log c_1(k)-1]\varepsilon_0( k))},\]
\[\lambda_1(k)=h_2(\log x_1) =\frac{  (\log k+\varepsilon_1( k)\log c_1(k))^2}{ (\log k)(\log k+[\log c_1(k)-1]\varepsilon_1( k))} .\]
\end{proof}

As $k\to\infty$,
\[\log c_0(k)\sim \log k,\quad \varepsilon_0(k)\sim \frac{\log k}{\log 8}.\]
Therefore,
\[\lambda_0(k)\sim \frac{\log k}{\log 8}\hspace{1cm}\text{as}\;\;k\to\infty.\]
 
In Table \ref{table3}, we tabulate the values of  $\lambda_0(k)$ for  $2\leq k\leq 25$. 

\renewcommand{\hsp}{\hspace{0.8cm}}
\begin{table}[h]
\caption{\label{table3} The values of $\lambda_0(k)$ for $2\leq k\leq 25$.}
\begin{tabular}{|c|c||c|c|}

\hline
$\hsp k \hsp$	&	$ \lambda_0(k) $    	&	$\hsp k\hsp $	& 	$\lambda_0(k)$ \\\hline
2	&	$\hsp 1.5126\hsp$	&	14	&	$\hsp 2.0568\hsp$	\\\hline
3	&	1.6025	&	15	&	2.0813	\\\hline
4	&	1.6735	&	16	&	2.1046	\\\hline
5	&	1.7328	&	17	&	2.1267	\\\hline
6	&	1.7842	&	18	&	2.1477	\\\hline
7	&	1.8297	&	19	&	2.1679	\\\hline
8	&	1.8706	&	20	&	2.1871	\\\hline
9	&	1.9078	&	21	&	2.2056	\\\hline
10	&	1.9420	&	22	&	2.2233	\\\hline
11	&	1.9737	&	23	&	2.2404	\\\hline
12	&	2.0032	&	24	&	2.2568	\\\hline
13	&	2.0308	&	25	&	2.2727	\\\hline

\end{tabular}
\end{table}

For fixed $k$, as  $\varepsilon_1(k)\to 0^+$,  $\lambda_1(k)\to 1$. 

\begin{remark}
One might be tempting to give an upper bound to $f_k(N_{k,\varepsilon})$ when $\varepsilon\leq\varepsilon_1(k)$. This is more tedious and it does not offer a bound that is better than $\lambda_1(k)$.
\end{remark}

Now we can conclude   the following.
\begin{theorem}\label{20250803_3}
For $k\geq 2$, the function
\[f_k(n)=\frac{\log d_k(n)\log\log n}{\log k\log n},\quad n \geq 3\] has a maximum value $\lambda(k)$ at a superior $k$-highly composite number $N_{\max}(k)$  associated to a positive number $\varepsilon(k)$. Let $\varepsilon_1(k)$ be a positive number less than or equal to  the number $\varepsilon_0(k)$ \eqref{20250720_6}, and let $\varepsilon_2(k)$ be a positive number that satisfies the conditions in Theorem \ref{20250719_1}. If the number $\lambda_1(k)$ given by \eqref{20250731_8} is less than $\lambda(k)$,  then we must have $\varepsilon_1(k)\leq\varepsilon(k)\leq\varepsilon_2(k)$. 
\end{theorem}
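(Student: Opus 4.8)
The plan is to assemble the conclusion entirely from the structural results already in hand, so the work is one of correctly invoking three earlier theorems and verifying an algebraic identity. First I would use Theorem~\ref{20250803_2} to obtain that $f_k$ attains a maximum value $\lambda(k)$ at some positive integer, which I denote $N_{\max}(k)$. Theorem~\ref{20250718_2} then guarantees that $N_{\max}(k)$ is a superior $k$-highly composite number associated to the particular positive number $\varepsilon(k)$ given by \eqref{20250724_1}. This already establishes the first assertion of the statement, and what remains is to trap $\varepsilon(k)$ in the interval $[\varepsilon_1(k),\varepsilon_2(k)]$.

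For the upper bound $\varepsilon(k)\leq\varepsilon_2(k)$ I would apply Theorem~\ref{20250717_5} with $(\varepsilon_0,N_0)=(\varepsilon_2(k),N_2(k))$ and $(\varepsilon_1,N_1)=(\varepsilon(k),N_{\max}(k))$. The hypotheses are met: Theorem~\ref{20250719_1} shows that no integer below $N_2(k)$ can be a maximizer, so $N_{\max}(k)\geq N_2(k)$; and since $N_{\max}(k)$ is itself a superior $k$-highly composite number associated to $\varepsilon(k)$, the inequality $d_k(n)/n^{\varepsilon(k)}\leq d_k(N_{\max}(k))/N_{\max}(k)^{\varepsilon(k)}$ holds for \emph{all} $n\in\mathbb{Z}^+$, hence in particular for $n\geq N_2(k)$. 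Part (i) of Theorem~\ref{20250717_5} then delivers $\varepsilon(k)\leq\varepsilon_2(k)$.

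For the lower bound I would argue by contradiction, and it is precisely here that the hypothesis $\lambda_1(k)<\lambda(k)$ is consumed. Suppose instead that $\varepsilon(k)\leq\varepsilon_1(k)$. Since $\varepsilon_1(k)\leq\varepsilon_0(k)$ by assumption and $\varepsilon(k)>0$, Theorem~\ref{20250718_3} applies with $\varepsilon=\varepsilon(k)$ and $N=N_{\max}(k)$, giving
\[\frac{\varepsilon(k)\,(\log\log N_{\max}(k))^2}{\log k\,(\log\log N_{\max}(k)-1)}\leq\lambda_1(k).\]
The engine of the proof is the observation that substituting the exact value of $\varepsilon(k)$ from \eqref{20250724_1} collapses the left-hand side to precisely $\lambda(k)$: the factors $\log k$, $\log\log N_{\max}(k)-1$, and $(\log\log N_{\max}(k))^2$ cancel against their counterparts appearing in \eqref{20250724_1}, leaving exactly $\lambda(k)$. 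Hence $\lambda(k)\leq\lambda_1(k)$, contradicting $\lambda_1(k)<\lambda(k)$. Therefore $\varepsilon(k)>\varepsilon_1(k)$, and combining this with the previous paragraph yields $\varepsilon_1(k)\leq\varepsilon(k)\leq\varepsilon_2(k)$.

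The only genuinely delicate point is bookkeeping rather than analysis: I must verify that the cancellation identifying the left-hand side of Theorem~\ref{20250718_3}'s bound with $\lambda(k)$ is exact—which it is, by the very construction of $\varepsilon(k)$ in \eqref{20250724_1}—and that the hypotheses of both Theorem~\ref{20250717_5} and Theorem~\ref{20250718_3} are fed the correct objects. In particular I would record that $N_{\max}(k)>e^{e^2}$, so that $\log\log N_{\max}(k)>2$, which is needed for Theorem~\ref{20250718_3}; this follows from $N_{\max}(k)\geq N_2(k)>e^{e^2}$ supplied by Theorem~\ref{20250719_1}.
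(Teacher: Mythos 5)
Your proposal is correct and follows essentially the same route as the paper: existence of the maximizer via Theorem~\ref{20250803_2}, superiority at $\varepsilon(k)$ via Theorem~\ref{20250718_2}, the upper bound from Theorem~\ref{20250719_1}, and the lower bound by feeding $\varepsilon(k)$ from \eqref{20250724_1} into Theorem~\ref{20250718_3} so the left-hand side collapses to $\lambda(k)$, contradicting $\lambda_1(k)<\lambda(k)$. You merely spell out two steps the paper leaves terse---the explicit appeal to Theorem~\ref{20250717_5}(i) to extract $\varepsilon(k)\leq\varepsilon_2(k)$, and the verification that $N_{\max}(k)\geq N_2(k)>e^{e^2}$---both of which are exactly the intended justifications.
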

\begin{proof}
In Theorem \ref{20250803_2}, we have shown that $f_k(n)$ has a maximum value at some positive integer $N_{\max}(k)$. By Theorem \ref{20250718_2}, $N_{\max}(k)$ must be a  superior $k$-highly composite number associated to some $\varepsilon(k)>0$. By Theorem \ref{20250719_1}, $\varepsilon(k)\leq \varepsilon_2(k)$. By Theorem \ref{20250718_3}, if $\varepsilon(k)<\varepsilon_1(k)$, then $\lambda(k)\leq \lambda_1(k)$. This contradicts to $\lambda_1(k)$ is less than $\lambda(k)$. Therefore, we must have $\varepsilon(k)\geq\varepsilon_1(k)$. 
\end{proof}

\bigskip
\section{Numerical Algorithm and Results}
Now we can use Theorem \ref{20250803_3} to develop an effective
  algorithm to determine  
\[\lambda(k)=\max_{n \geq 3} f_k(n)=\max_{n \geq 3}\frac{\log d_k(n)\log\log n}{\log k\log N}\] and find a  positive integer  $N_{\max}(k)$ such that
\[\lambda(k)=f(N_{\max}(k)).\]

We already know that $N_{\max}(k)$ must be a superior $k$-highly composite number associated to  some $\varepsilon(k)>0$. We can fix $\varepsilon_2(k)$ to be the number defined by \eqref{20250731_13} in the proof of Theorem  \ref{20250719_1}.
 We need to find $\varepsilon_1(k)$ so that $\varepsilon_1(k)\leq\varepsilon_0(k)$ and $\varepsilon(k)\geq\varepsilon_1(k)$.

We start with  $\varepsilon_1(k)=\varepsilon_0(k)$ and compute $\lambda_1(k)$.
 Consider those superior $k$-highly composite numbers associated to $\varepsilon$ with $\di \varepsilon_1(k)\leq \varepsilon\leq \varepsilon_2(k)$. Let $p_{\max}(k)$ be the largest prime number such that
\[p_{\max}(k)\leq k^{1/\varepsilon_1(k)}.\]  There are only finitely many primes $p$ such that $p\leq p_{\max}(k)$. For each such $p$, let
\[\mathscr{B}_p(k)=\left\{\left.\log_p\frac{k-1+m}{m}\,\right|\, m\in \mb{Z}^+, \varepsilon_1(k)\leq \log_p\frac{k-1+m}{m}\leq\varepsilon_2(k)\right\}.\]
Then 
\[\mathscr{B}(k)=\bigcup_{p\leq p_{\max}(k)}\mathscr{B}_p(k)\] is a finite set which contains all elements of  $\mathscr{A}(k)$ that lies in the interval $[\varepsilon_1(k), \varepsilon_2(k)]$. Arrange  the elements in $\mathscr{B}(k)$ in descending order. For each $\varepsilon\in \mathscr{B}(k)$, compute all the superior $k$-highly composite numbers $N_{k,\varepsilon}$ associated to $\varepsilon$, 
and compute the corresponding values $f(N_{k,\varepsilon})$. If any of the values $f(N_{k,\varepsilon})$ is larger than or equal to $\lambda_1(k)$, then the largest one gives $\lambda(k)$, and the corresponding $N_{k,\varepsilon}$ is then $N_{\max}(k)$.

In case none of the $f(N_{k,\varepsilon})$ with $\varepsilon\in\mathscr{B}(k)$ is larger than or equal to $\lambda_1(k)$, one changes the $\varepsilon_1(k)$ to a smaller one, and repeat the whole algorithm. Since $\lambda_1(k)\to 1$ as $\varepsilon_1(k)\to 0^+$, but we know that $\lambda(k)\geq 1.4677 $, this process must end after a finite number of trials.

 We implement the algorithm using MATLAB, and the results are given in the following.

\medskip
\subsection{The $\pmb{k=2}$ case}

Although this case has been solved in \cite{Nicolas}, we   reproduce it here for completeness.
We use 
\[\varepsilon_1(2)=\varepsilon_0(2)=0.1950\hspace{1cm}\text{and}\hspace{1cm}\varepsilon_2(2)=0.3466.\] Then \[\lambda_1(2)=1.5126.\] 
In Table \ref{table4}, we list down all the superior $2$-highly composite numbers $N_{2,\varepsilon}$ and the corresponding values $f_2(N_{2,\varepsilon})$ for  $\varepsilon_1(2)\leq\varepsilon\leq \varepsilon_2(2)$.

\begin{table}[h]
\caption{\label{table4} The values of $f_2(N_{2,\varepsilon})$ for $\varepsilon_1(2)\leq\varepsilon\leq \varepsilon_2(2)$.}

\begin{tabular}{|c|p{10cm}|c|}

\hline	Range of $\varepsilon$	&	$N_{2,\varepsilon}$	&	$f_2(N_{2,\varepsilon})$	\\\hline
$	0.3219\leq \varepsilon \leq 0.3466	$&$	2520=2^3\times 3^2\times 5\times 7	$&$	1.4677	$\\\hline
$	0.2891\leq\varepsilon\leq0.3219	$&$	5040=2^4\times 3^2\times 5\times 7	$&$	1.4849	$\\\hline
$	0.2702\leq\varepsilon\leq0.2891	$&$	55440=2^4\times 3^2\times 5\times 7\times 11	$&$	1.5118	$\\\hline
$	0.2630\leq\varepsilon\leq0.2702	$&$	720720=2^4\times 3^2\times 5\times 7\times 11\times 13	$&$	1.5252	$\\\hline
$	0.2619\leq\varepsilon\leq0.2630	$&$	1441440=2^5\times 3^2\times 5\times 7\times 11\times 13	$&$	1.5278	$\\\hline
$	0.2519\leq\varepsilon\leq0.2619	$&$	4324320=2^5\times 3^3\times 5\times 7\times 11\times 13	$&$	1.5319	$\\\hline
$	0.2447\leq\varepsilon\leq0.2519	$&$	21621600=2^5\times 3^3\times 5^2\times 7\times 11\times 13	$&$	1.5347	$\\\hline
$	0.2354\leq\varepsilon\leq0.2447	$&$	367567200=2^5\times 3^3\times 5^2\times 7\times 11\times 13\times 17	$&$	1.5376	$\\\hline
$	0.2224\leq\varepsilon\leq0.2354	$&$	6983776800=2^5\times 3^3\times 5^2\times 7\times 11\times 13\times 17\times 19	$&$	1.5379	$\\\hline
$	0.2211\leq\varepsilon\leq0.2224	$&$	13967553600=2^6\times 3^3\times 5^2\times 7\times 11\times 13\times 17\times 19	$&$	1.5367	$\\\hline
$	0.2084\leq\varepsilon\leq0.2211	$&$	321253732800=2^6\times 3^3\times 5^2\times 7\times 11\times 13\times 17\times 19\times 23	$&$	1.5327	$\\\hline
$	0.2058\leq\varepsilon\leq0.2084	$&$	2248776129600=2^6\times 3^3\times 5^2\times 7^2\times 11\times 13\times 17\times 19\times 23	$&$	1.5276	$\\\hline
 	\multirow{2}{8.6em}{$0.2031\leq\varepsilon\leq0.2058	$}&$	65214507758400$  & 	\multirow{2}{2.8em}{$1.5203	$} \\
	 & $=2^6\times 3^3\times 5^2\times 7^2\times 11\times 13\times 17\times 19\times 23\times 29	$& \\\hline
	\multirow{2}{8.6em}{$	0.2018\leq\varepsilon\leq0.2031	$}&$	195643523275200$  & 	\multirow{2}{2.8em}{$	1.5181	$}
	\\
	&$=2^6\times 3^4\times 5^2\times 7^2\times 11\times 13\times 17\times 19\times 23\times 29	$&\\\hline
	\multirow{2}{8.6em}{$	0.1950\leq \varepsilon\leq 0.2018	$}&$	6064949221531200$  &\multirow{2}{2.8em}{$	1.5126	$}\\
	&$=2^6\times 3^4\times 5^2\times 7^2\times 11\times 13\times 17\times 19\times 23\times 29\times 31	$&\\\hline

\end{tabular}
\end{table}

From the table, we find that the maximum value of $f_2(n)$ is 
\[\lambda(2)=1.5379,\] and it appears at
\[N_{\max}(2)=6983776800=2^5\times 3^3\times 5^2\times 7\times 11\times 13\times 17\times 19.\]
It is the superior 2-highly composite number associated to $\varepsilon$ for 
\[0.2224\leq\varepsilon\leq0.2354.\]

\medskip
\subsection{The $\pmb{k=3}$ case}

We use \[\varepsilon_1(3)=0.98\times\varepsilon_0(3)=0.3267\hspace{1cm} \text{and}\hspace{1cm}\varepsilon_2(3)=0.5493.\] Then 
\[\lambda_1(3)=1.5810.\] 
In Table \ref{table13}, we list down all the superior $3$-highly composite numbers $N_{3,\varepsilon}$ and the corresponding values $f_3(N_{3,\varepsilon})$ for  $\varepsilon_1(3)\leq\varepsilon\leq \varepsilon_2(3)$.

\begin{table}[h]
\caption{\label{table13} The values of $f_3(N_{3,\varepsilon})$ for $\varepsilon_1(3)\leq\varepsilon\leq \varepsilon_2(3)$.}

\begin{tabular}{|c|p{10cm}|c|}

\hline	Range of $\varepsilon$	&	$N_{3,\varepsilon}$	&	$f_3(N_{3,\varepsilon})$	\\\hline
$	0.4854\leq\varepsilon\leq0.5493	$&$	5040=2^4\times 3^2\times 5\times 7	$&$	1.5324	$\\\hline
$	0.4650\leq\varepsilon\leq0.4854	$&$	10080=2^5\times 3^2\times 5\times 7	$&$	1.5426	$\\\hline
$	0.4582\leq\varepsilon\leq0.4650	$&$	30240=2^5\times 3^3\times 5\times 7	$&$	1.5534	$\\\hline
$	0.4307\leq\varepsilon\leq0.4582	$&$	332640=2^5\times 3^3\times 5\times 7\times 11	$&$	1.5733	$\\\hline
$	0.4283\leq\varepsilon\leq0.4307	$&$	1663200=2^5\times 3^3\times 5^2\times 7\times 11	$&$	1.5792	$\\\hline
$	0.4150\leq\varepsilon\leq0.4283	$&$	21621600=2^5\times 3^3\times 5^2\times 7\times 11\times 13	$&$	1.5897	$\\\hline
$	0.3878\leq\varepsilon\leq0.4150	$&$	43243200=2^6\times 3^3\times 5^2\times 7\times 11\times 13	$&$	1.5914	$\\\hline
$	0.3731\leq\varepsilon\leq0.3878	$&$	735134400=2^6\times 3^3\times 5^2\times 7\times 11\times 13\times 17	$&$	1.5897	$\\\hline
$	0.3691\leq\varepsilon\leq0.3731	$&$	13967553600=2^6\times 3^3\times 5^2\times 7\times 11\times 13\times 17\times 19	$&$	1.5863	$\\\hline
$	0.3626\leq\varepsilon\leq0.3691	$&$	41902660800=2^6\times 3^4\times 5^2\times 7\times 11\times 13\times 17\times 19	$&$	1.5854	$\\\hline
$	0.3562\leq\varepsilon\leq0.3626	$&$	83805321600=2^7\times 3^4\times 5^2\times 7\times 11\times 13\times 17\times 19	$&$	1.5845	$\\\hline
$	0.3504\leq\varepsilon\leq0.3562	$&$	586637251200=2^7\times 3^4\times 5^2\times 7^2\times 11\times 13\times 17\times 19	$&$	1.5815	$\\\hline
$	0.3267\leq\varepsilon\leq0.3504	$&$	13492656777600=2^7\times 3^4\times 5^2\times 7^2\times 11\times 13\times 17\times 19\times 23	$&$	1.5773	$\\\hline

\end{tabular}
\end{table}

From the table, we find that the maximum value of $f_3(n)$ is 
\[\lambda(3)=1.5914,\] and it appears at
\[N_{\max}(3)=43243200=2^6\times 3^3\times 5^2\times 7\times 11\times 13.\]
It is the superior 3-highly composite number associated to $\varepsilon$ for 
\[0.3878\leq\varepsilon\leq0.4150.\]

\vfill\pagebreak
\subsection{The $\pmb{k=4}$ case}
We use \[\varepsilon_1(4)=0.94\times\varepsilon_0(4)=0.4142\hspace{1cm} \text{and}\hspace{1cm}\varepsilon_2(4)=0.6931.\] Then 
\[\lambda_1(4)=1.6265.\] 
In Table \ref{table14}, we list down all the superior $4$-highly composite numbers $N_{4,\varepsilon}$ and the corresponding values $f_4(N_{4,\varepsilon})$ for  $\varepsilon_1(4)\leq\varepsilon\leq \varepsilon_2(4)$.

\begin{table}[h]
\caption{\label{table14} The values of $f_4(N_{4,\varepsilon})$ for $\varepsilon_1(4)\leq\varepsilon\leq \varepsilon_2(4)$.}

\begin{tabular}{|c|p{10cm}|c|}

\hline	Range of $\varepsilon$	&	$N_{4,\varepsilon}$	&	$f_4(N_{4,\varepsilon})$	\\\hline
$	0.6781\leq\varepsilon\leq0.6931	$&$	5040=2^4\times 3^2\times 5\times 7	$&$	1.5650	$\\\hline
$	0.6309\leq\varepsilon\leq0.6781	$&$	10080=2^5\times 3^2\times 5\times 7	$&$	1.5818	$\\\hline
$	0.5850\leq\varepsilon\leq0.6309	$&$	30240=2^5\times 3^3\times 5\times 7	$&$	1.5981	$\\\hline
$	0.5781\leq\varepsilon\leq0.5850	$&$	60480=2^6\times 3^3\times 5\times 7	$&$	1.6029	$\\\hline
$	0.5693\leq\varepsilon\leq0.5781	$&$	665280=2^6\times 3^3\times 5\times 7\times 11	$&$	1.6180	$\\\hline
$	0.5405\leq\varepsilon\leq0.5693	$&$	3326400=2^6\times 3^3\times 5^2\times 7\times 11	$&$	1.6269	$\\\hline
$	0.5146\leq\varepsilon\leq0.5405	$&$	43243200=2^6\times 3^3\times 5^2\times 7\times 11\times 13	$&$	1.6335	$\\\hline
$	0.5094\leq\varepsilon\leq0.5146	$&$	86486400=2^7\times 3^3\times 5^2\times 7\times 11\times 13	$&$	1.6336	$\\\hline
$	0.4893\leq\varepsilon\leq0.5094	$&$	259459200=2^7\times 3^4\times 5^2\times 7\times 11\times 13	$&$	1.6337	$\\\hline
$	0.4709\leq\varepsilon\leq0.4893	$&$	4410806400=2^7\times 3^4\times 5^2\times 7\times 11\times 13\times 17	$&$	1.6305	$\\\hline
$	0.4708\leq\varepsilon\leq0.4709	$&$	30875644800=2^7\times 3^4\times 5^2\times 7^2\times 11\times 13\times 17	$&$	1.6269	$\\\hline
$	0.4594\leq\varepsilon\leq0.4708	$&$	586637251200=2^7\times 3^4\times 5^2\times 7^2\times 11\times 13\times 17\times 19	$&$	1.6243	$\\\hline
$	0.4421\leq\varepsilon\leq0.4594	$&$	1173274502400=2^8\times 3^4\times 5^2\times 7^2\times 11\times 13\times 17\times 19	$&$	1.6234	$\\\hline
$	0.4142\leq\varepsilon\leq0.4421	$&$	26985313555200=2^8\times 3^4\times 5^2\times 7^2\times 11\times 13\times 17\times 19\times 23	$&$	1.6167	$\\\hline

\end{tabular}
\end{table}

From the table, we find that the maximum value of $f_4(n)$ is 
\[\lambda(4)=	1.6337,\] and it appears at
\[N_{\max}(4)=259459200=2^7\times 3^4\times 5^2\times 7\times 11\times 13.\]
It is the superior 4-highly composite number associated to $\varepsilon$ for 
\[0.4893\leq\varepsilon\leq0.5094.\]

\vfill\pagebreak
\subsection{The $\pmb{k=5}$ case}
We use \[\varepsilon_1(5)=0.92\times\varepsilon_0(5)=0.4861\hspace{1cm} \text{and}\hspace{1cm}\varepsilon_2(5)=0.8047.\] Then 
\[\lambda_1(5)=1.6657.\] 
In Table \ref{table15}, we list down all the superior $5$-highly composite numbers $N_{5,\varepsilon}$ and the corresponding values $f_5(N_{5,\varepsilon})$ for  $\varepsilon_1(5)\leq\varepsilon\leq \varepsilon_2(5)$.

\begin{table}[h]
\caption{\label{table15} The values of $f_5(N_{5,\varepsilon})$ for $\varepsilon_1(5)\leq\varepsilon\leq \varepsilon_2(5)$.}

\begin{tabular}{|c|p{10cm}|c|}

\hline	Range of $\varepsilon$	&	$N_{5,\varepsilon}$	&	$f_5(N_{5,\varepsilon})$	\\\hline
  $	0.7712\leq\varepsilon\leq0.8047	$&$	10080=2^4\times 3^2\times 5\times 7	$&$	1.6114	$\\\hline
$	0.7370\leq\varepsilon\leq0.7712	$&$	30240=2^5\times 3^3\times 5\times 7	$&$	1.6319	$\\\hline
$	0.6826\leq\varepsilon\leq0.7370	$&$	60480=2^6\times 3^3\times 5\times 7	$&$	1.6409	$\\\hline
$	0.6712\leq\varepsilon\leq0.6826	$&$	302400=2^6\times 3^3\times 5^2\times 7	$&$	1.6502	$\\\hline
$	0.6521\leq\varepsilon\leq0.6712	$&$	3326400=2^6\times 3^3\times 5^2\times 7\times 11	$&$	1.6623	$\\\hline
$	0.6309\leq\varepsilon\leq0.6521	$&$	6652800=2^7\times 3^3\times 5^2\times 7\times 11	$&$	1.6646	$\\\hline
$	0.6275\leq\varepsilon\leq0.6309	$&$	19958400=2^7\times 3^4\times 5^2\times 7\times 11	$&$	1.6663	$\\\hline
$	0.5850\leq\varepsilon\leq0.6275	$&$	259459200=2^7\times 3^4\times 5^2\times 7\times 11\times 13	$&$	1.6714	$\\\hline
$	0.5681\leq\varepsilon\leq0.5850  $&$	518918400=2^8\times 3^4\times 5^2\times 7\times 11\times 13	$&$	1.6705	$\\\hline
$	0.5646\leq\varepsilon\leq0.5681	$&$	8821612800=2^8\times 3^4\times 5^2\times 7\times 11\times 13\times 17	$&$	1.6650	$\\\hline
$	0.5466\leq\varepsilon\leq0.5646	$&$	61751289600=2^8\times 3^4\times 5^2\times 7^2\times 11\times 13\times 17	$&$	1.6628	$\\\hline
$	0.5350\leq\varepsilon\leq0.5466	$&$	1173274502400=2^8\times 3^4\times 5^2\times 7^2\times 11\times 13\times 17\times 19	$&$	1.6581	$\\\hline
$	0.5305\leq\varepsilon\leq0.5350	$&$	3519823507200=2^8\times 3^5\times 5^2\times 7^2\times 11\times 13\times 17\times 19	$&$	1.6562	$\\\hline
$	0.5265\leq\varepsilon\leq0.5305	$&$	7039647014400=2^9\times 3^5\times 5^2\times 7^2\times 11\times 13\times 17\times 19	$&$	1.6549	$\\\hline
$	0.5133\leq\varepsilon\leq0.5265	$&$	35198235072000=2^9\times 3^5\times 5^3\times 7^2\times 11\times 13\times 17\times 19	$&$	1.6522	$\\\hline
$	0.4861\leq\varepsilon\leq0.5133	$&$	809559406656000=2^9\times 3^5\times 5^3\times 7^2\times 11\times 13\times 17\times 19\times 23	$&$	1.6461	$\\\hline

\end{tabular}
\end{table}

From the table, we find that the maximum value of $f_5(n)$ is 
\[\lambda(5)=1.6714,\] and it appears at
\[N_{\max}(5)=259459200=2^7\times 3^4\times 5^2\times 7\times 11\times 13.\]
It is the superior 5-highly composite number associated to $\varepsilon$ for 
\[0.5850\leq\varepsilon\leq0.6275.\]
Note that $N_{\max}(5)=N_{\max}(4)$ even though $\lambda(5)\neq\lambda(4)$.

\vfill\pagebreak
\subsection{The general case}
We compute $\lambda(k)$, the maximum value of $f_k(n)$, for $2\leq k\leq 2000$ and the corresponding maximizer $N_{\max}(k)$. The results for $\lambda(k)$ as well as the corresponding parameters  when $2\leq k\leq 100$ were given in Tables \ref{table16_1},   \ref{table16_2} and \ref{table16_3}. The range of $\varepsilon$ is such that $N_{\max}(k)=N_{k,\varepsilon}$. 

\renewcommand{\hsp}{\hspace{0.2cm}}
 \begin{table}[h]
\caption{\label{table16_1} The values of $\lambda(k)$ and the corresponding parameters (Part I).}

\begin{tabular}{|c|c|c|c|c|c|}
\hline
$\hsp k\hsp $& $\varepsilon_1(k)$ & $\varepsilon_2(k)$ & $\lambda_1(k)$ & Range of $\varepsilon$ &  $\lambda(k)$
\\\hline
2&$\hsp	0.1950=  \varepsilon_0(k)	\hsp $&$\hsp	0.3466\hsp	$&$\hsp	1.5126\hsp	$&$	\hsp 0.2224\leq \varepsilon\leq 0.2354\hsp 	$&$	\hsp 1.5379	\hsp$\\\hline
3&$	0.3267=0.98\times \varepsilon_0(k)	$&$	0.5493	$&$	1.5882	$&$	0.3878\leq \varepsilon\leq 0.4150	$&$	1.5914	$\\\hline
4&$	0.4142=0.94\times \varepsilon_0(k)	$&$	0.6931	$&$	1.6265	$&$	0.4893\leq \varepsilon\leq 0.5094	$&$	1.6337	$\\\hline
5&$	0.4861=0.92\times \varepsilon_0(k)	$&$	0.8047	$&$	1.6657	$&$	0.5850\leq \varepsilon\leq 0.6275	$&$	1.6714	$\\\hline
6&$	0.5422=0.9\times \varepsilon_0(k)	$&$	0.8959	$&$	1.6956	$&$	0.6438\leq \varepsilon\leq 0.6986	$&$	1.7029	$\\\hline
7&$	0.5933=0.89\times \varepsilon_0(k)	$&$	0.9730	$&$	1.7276	$&$	0.7370\leq \varepsilon\leq 0.7587	$&$	1.7299	$\\\hline
8&$	0.6365=0.88\times \varepsilon_0(k)	$&$	1.0397	$&$	1.7547	$&$	0.8107\leq \varepsilon\leq 0.8301	$&$	1.7549	$\\\hline
9&$	0.6734=0.87\times \varepsilon_0(k)	$&$	1.0986	$&$	1.7778	$&$	0.8697\leq \varepsilon\leq 0.9163	$&$	1.7782	$\\\hline
10&$	0.7050=0.86\times \varepsilon_0(k)	$&$	1.1513	$&$	1.7977	$&$	0.8977\leq \varepsilon\leq 0.9260	$&$	1.7993	$\\\hline
11&$	0.7324=0.85\times \varepsilon_0(k)	$&$	1.1989	$&$	1.8147	$&$	0.9349\leq \varepsilon\leq 1	$&$	1.8198	$\\\hline
12&$	0.7561=0.84\times \varepsilon_0(k)	$&$	1.2425	$&$	1.8294	$&$	1\leq \varepsilon\leq 1.0363	$&$	1.8382	$\\\hline
13&$	0.7861=0.84\times \varepsilon_0(k)	$&$	1.2825	$&$	1.8529	$&$	1\leq \varepsilon\leq 1.0641	$&$	1.8553	$\\\hline
14&$	0.8042=0.83\times \varepsilon_0(k)	$&$	1.3195	$&$	1.8638	$&$	1.1006\leq \varepsilon\leq 1.1255	$&$	1.8719	$\\\hline
15&$	0.8300=0.83\times \varepsilon_0(k)	$&$	1.3540	$&$	1.8844	$&$	1.1293\leq \varepsilon\leq 1.1844	$&$	1.8880	$\\\hline
16&$	0.8439=0.82\times \varepsilon_0(k)	$&$	1.3863	$&$	1.8923	$&$	1.1699\leq \varepsilon\leq 1.2410	$&$	1.9030	$\\\hline
17&$	0.8664=0.82\times \varepsilon_0(k)	$&$	1.4166	$&$	1.9107	$&$	1.1827\leq \varepsilon\leq 1.2224	$&$	1.9169	$\\\hline
18&$	0.8878=0.82\times \varepsilon_0(k)	$&$	1.4452	$&$	1.9282	$&$	1.2231\leq \varepsilon\leq 1.2730	$&$	1.9305	$\\\hline
19&$	0.8969=0.81\times \varepsilon_0(k)	$&$	1.4722	$&$	1.9326	$&$	1.2619\leq \varepsilon\leq 1.3219	$&$	1.9433	$\\\hline
20&$	0.9159=0.81\times \varepsilon_0(k)	$&$	1.4979	$&$	1.9484	$&$	1.2996\leq \varepsilon\leq 1.3692	$&$	1.9553	$\\\hline
21&$	0.9340=0.81\times \varepsilon_0(k)	$&$	1.5223	$&$	1.9636	$&$	1.2801\leq \varepsilon\leq 1.3347	$&$	1.9670	$\\\hline
22&$	0.9514=0.81\times \varepsilon_0(k)	$&$	1.5455	$&$	1.9782	$&$	1.3219\leq \varepsilon\leq 1.3691	$&$	1.9786	$\\\hline
23&$	0.9560=0.8\times \varepsilon_0(k)	$&$	1.5677	$&$	1.9793	$&$	1.3626\leq \varepsilon\leq 1.4022	$&$	1.9895	$\\\hline
24&$	0.9717=0.8\times \varepsilon_0(k)	$&$	1.5890	$&$	1.9926	$&$	1.3418\leq \varepsilon\leq 1.4021	$&$	2.0000	$\\\hline
25&$	0.9868=0.8\times \varepsilon_0(k)	$&$	1.6094	$&$	2.0055	$&$	1.3785\leq \varepsilon\leq 1.4406	$&$	2.0103	$\\\hline
26&$	1.0013=0.8\times \varepsilon_0(k)	$&$	1.7105	$&$	2.0179	$&$	1.4150\leq \varepsilon\leq 1.4780	$&$	2.0201	$\\\hline
27&$	1.0026=0.79\times \varepsilon_0(k)	$&$	1.7500	$&$	2.0164	$&$	1.4507\leq \varepsilon\leq 1.5146	$&$	2.0295	$\\\hline
28&$	1.0159=0.79\times \varepsilon_0(k)	$&$	1.7885	$&$	2.0279	$&$	1.4854\leq \varepsilon\leq 1.5502	$&$	2.0385	$\\\hline
29&$	1.0288=0.79\times \varepsilon_0(k)	$&$	1.8260	$&$	2.0391	$&$	1.4650\leq \varepsilon\leq 1.5194	$&$	2.0473	$\\\hline
30&$	1.0413=0.79\times \varepsilon_0(k)	$&$	1.8625	$&$	2.0499	$&$	1.4919\leq \varepsilon\leq 1.5525	$&$	2.0559	$\\\hline
31&$	1.0533=0.79\times \varepsilon_0(k)	$&$	1.8981	$&$	2.0604	$&$	1.5236\leq \varepsilon\leq 1.5850	$&$	2.0642	$\\\hline
32&$	1.0650=0.79\times \varepsilon_0(k)	$&$	1.9329	$&$	2.0707	$&$	1.5546\leq \varepsilon\leq 1.6167	$&$	2.0722	$\\\hline
33&$	1.0627=0.78\times \varepsilon_0(k)	$&$	1.9668	$&$	2.0663	$&$	1.5850\leq \varepsilon\leq 1.6477	$&$	2.0798	$\\\hline
34&$	1.0736=0.78\times \varepsilon_0(k)	$&$	2.0000	$&$	2.0759	$&$	1.5865\leq \varepsilon\leq 1.6147	$&$	2.0875	$\\\hline
\end{tabular}\end{table}

\vfill\pagebreak
~

 \begin{table}[h]
\caption{\label{table16_2} The values of $\lambda(k)$ and the corresponding parameters (Part II).}

\begin{tabular}{|c|c|c|c|c|c|}
\hline
$\hsp k\hsp $& $\varepsilon_1(k)$ & $\varepsilon_2(k)$ & $\lambda_1(k)$ & Range of $\varepsilon$ &  $\lambda(k)$
\\\hline

35&$\hsp	1.0842=0.78\times \varepsilon_0(k)\hsp	$&$	\hsp 2.0324\hsp	$&$\hsp	2.0852	\hsp$&$\hsp	1.6090\leq \varepsilon\leq 1.6439	\hsp$&$	\hsp 2.0949\hsp	$\\\hline
36&$	1.0945=0.78\times \varepsilon_0(k)	$&$	2.0641	$&$	2.0943	$&$	1.6309\leq \varepsilon\leq 1.6724	$&$	2.1021	$\\\hline
37&$	1.1045=0.78\times \varepsilon_0(k)	$&$	2.0952	$&$	2.1031	$&$	1.5937\leq \varepsilon\leq 1.6405	$&$	2.1092	$\\\hline
38&$	1.1142=0.78\times \varepsilon_0(k)	$&$	2.1255	$&$	2.1118	$&$	1.6114\leq \varepsilon\leq 1.6674	$&$	2.1162	$\\\hline
39&$	1.1237=0.78\times \varepsilon_0(k)	$&$	2.1553	$&$	2.1202	$&$	1.6374\leq \varepsilon\leq 1.6937	$&$	2.1231	$\\\hline
40&$	1.1330=0.78\times \varepsilon_0(k)	$&$	2.1844	$&$	2.1285	$&$	1.6630\leq \varepsilon\leq 1.7137	$&$	2.1298	$\\\hline
41&$	1.1274=0.77\times \varepsilon_0(k)	$&$	2.2130	$&$	2.1214	$&$	1.6881\leq \varepsilon\leq 1.7333	$&$	2.1362	$\\\hline
42&$	1.1361=0.77\times \varepsilon_0(k)	$&$	2.2410	$&$	2.1292	$&$	1.7127\leq \varepsilon\leq 1.7525	$&$	2.1425	$\\\hline
43&$	1.1446=0.77\times \varepsilon_0(k)	$&$	2.2685	$&$	2.1368	$&$	1.6828\leq \varepsilon\leq 1.7370	$&$	2.1488	$\\\hline
44&$	1.1529=0.77\times \varepsilon_0(k)	$&$	2.2955	$&$	2.1443	$&$	1.7063\leq \varepsilon\leq 1.7608	$&$	2.1549	$\\\hline
45&$	1.1610=0.77\times \varepsilon_0(k)	$&$	2.3219	$&$	2.1516	$&$	1.7294\leq \varepsilon\leq 1.7843	$&$	2.1608	$\\\hline
46&$	1.1690=0.77\times \varepsilon_0(k)	$&$	2.3479	$&$	2.1588	$&$	1.7521\leq \varepsilon\leq 1.8074	$&$	2.1667	$\\\hline
47&$	1.1768=0.77\times \varepsilon_0(k)	$&$	2.3735	$&$	2.1658	$&$	1.7744\leq \varepsilon\leq 1.8301	$&$	2.1723	$\\\hline
48&$	1.1844=0.77\times \varepsilon_0(k)	$&$	2.3985	$&$	2.1727	$&$	1.7965\leq \varepsilon\leq 1.8524	$&$	2.1778	$\\\hline
49&$	1.1919=0.77\times \varepsilon_0(k)	$&$	2.4232	$&$	2.1795	$&$	1.7712\leq \varepsilon\leq 1.8182	$&$	2.1834	$\\\hline
50&$	1.1993=0.77\times \varepsilon_0(k)	$&$	2.4475	$&$	2.1861	$&$	1.7874\leq \varepsilon\leq 1.8395	$&$	2.1888	$\\\hline
51&$	1.2064=0.77\times \varepsilon_0(k)	$&$	2.4713	$&$	2.1926	$&$	1.8074\leq \varepsilon\leq 1.8606	$&$	2.1941	$\\\hline
52&$	1.2135=0.77\times \varepsilon_0(k)	$&$	2.4948	$&$	2.1990	$&$	1.8278\leq \varepsilon\leq 1.8814	$&$	2.1992	$\\\hline
53&$	1.2046=0.76\times \varepsilon_0(k)	$&$	2.5178	$&$	2.1891	$&$	1.8480\leq \varepsilon\leq 1.9018	$&$	2.2043	$\\\hline
54&$	1.2113=0.76\times \varepsilon_0(k)	$&$	2.5406	$&$	2.1952	$&$	1.8679\leq \varepsilon\leq 1.9220	$&$	2.2092	$\\\hline
55&$	1.2179=0.76\times \varepsilon_0(k)	$&$	2.5629	$&$	2.2013	$&$	1.8639\leq \varepsilon\leq 1.8875	$&$	2.2141	$\\\hline
56&$	1.2243=0.76\times \varepsilon_0(k)	$&$	2.5850	$&$	2.2072	$&$	1.8785\leq \varepsilon\leq 1.9069	$&$	2.2190	$\\\hline
57&$	1.2307=0.76\times \varepsilon_0(k)	$&$	2.6067	$&$	2.2130	$&$	1.8928\leq \varepsilon\leq 1.9260	$&$	2.2238	$\\\hline
58&$	1.2369=0.76\times \varepsilon_0(k)	$&$	2.6280	$&$	2.2187	$&$	1.9069\leq \varepsilon\leq 1.9449	$&$	2.2284	$\\\hline
59&$	1.2431=0.76\times \varepsilon_0(k)	$&$	2.6491	$&$	2.2243	$&$	1.8716\leq \varepsilon\leq 1.9115	$&$	2.2330	$\\\hline
60&$	1.2491=0.76\times \varepsilon_0(k)	$&$	2.6699	$&$	2.2298	$&$	1.8817\leq \varepsilon\leq 1.9296	$&$	2.2377	$\\\hline
61&$	1.2551=0.76\times \varepsilon_0(k)	$&$	2.6903	$&$	2.2353	$&$	1.8981\leq \varepsilon\leq 1.9475	$&$	2.2422	$\\\hline
62&$	1.2609=0.76\times \varepsilon_0(k)	$&$	2.7105	$&$	2.2406	$&$	1.9156\leq \varepsilon\leq 1.9613	$&$	2.2467	$\\\hline
63&$	1.2667=0.76\times \varepsilon_0(k)	$&$	2.7304	$&$	2.2459	$&$	1.9329\leq \varepsilon\leq 1.9744	$&$	2.2511	$\\\hline
64&$	1.2723=0.76\times \varepsilon_0(k)	$&$	2.7500	$&$	2.2511	$&$	1.9500\leq \varepsilon\leq 1.9873	$&$	2.2554	$\\\hline
65&$	1.2779=0.76\times \varepsilon_0(k)	$&$	2.7694	$&$	2.2563	$&$	1.9668\leq \varepsilon\leq 2	$&$	2.2596	$\\\hline
66&$	1.2834=0.76\times \varepsilon_0(k)	$&$	2.7885	$&$	2.2613	$&$	1.9391\leq \varepsilon\leq 1.9835	$&$	2.2638	$\\\hline
67&$	1.2888=0.76\times \varepsilon_0(k)	$&$	2.8074	$&$	2.2663	$&$	1.9522\leq \varepsilon\leq 2	$&$	2.2680	$\\\hline

\end{tabular}\end{table}

\vfill\pagebreak
~

 \begin{table}[h]
\caption{\label{table16_3}The values of $\lambda(k)$ and the corresponding parameters (Part III).}

\begin{tabular}{|c|c|c|c|c|c|}
\hline
$\hsp k\hsp$& $\varepsilon_1(k)$ & $\varepsilon_2(k)$ & $\lambda_1(k)$ & Range of $\varepsilon$ &  $\lambda(k)$
\\\hline
68&$\hsp	1.2942=0.76\times \varepsilon_0(k)	\hsp$&$	\hsp 2.8260\hsp	$&$	\hsp 2.2712\hsp	$&$\hsp	1.9683\leq \varepsilon\leq 2.0163	\hsp $&$	\hsp 2.2721\hsp	$\\\hline
69&$	1.2994=0.76\times \varepsilon_0(k)	$&$	2.8443	$&$	2.2761	$&$	1.9842\leq \varepsilon\leq 2.0324	$&$	2.2762	$\\\hline
70&$	1.2874=0.75\times \varepsilon_0(k)	$&$	2.8625	$&$	2.2635	$&$	2\leq \varepsilon\leq 2.0484	$&$	2.2801	$\\\hline
71&$	1.2925=0.75\times \varepsilon_0(k)	$&$	2.8804	$&$	2.2682	$&$	2.0156\leq \varepsilon\leq 2.0641	$&$	2.2840	$\\\hline
72&$	1.2975=0.75\times \varepsilon_0(k)	$&$	2.8981	$&$	2.2728	$&$	2.0310\leq \varepsilon\leq 2.0797	$&$	2.2879	$\\\hline
73&$	1.3024=0.75\times \varepsilon_0(k)	$&$	2.9156	$&$	2.2774	$&$	2\leq \varepsilon\leq 2.0463	$&$	2.2916	$\\\hline
74&$	1.3072=0.75\times \varepsilon_0(k)	$&$	2.9329	$&$	2.2818	$&$	2.0150\leq \varepsilon\leq 2.0614	$&$	2.2954	$\\\hline
75&$	1.3120=0.75\times \varepsilon_0(k)	$&$	2.9500	$&$	2.2863	$&$	2.0297\leq \varepsilon\leq 2.0764	$&$	2.2992	$\\\hline
76&$	1.3167=0.75\times \varepsilon_0(k)	$&$	2.9668	$&$	2.2906	$&$	2.0444\leq \varepsilon\leq 2.0911	$&$	2.3029	$\\\hline
77&$	1.3214=0.75\times \varepsilon_0(k)	$&$	2.9835	$&$	2.2950	$&$	2.0589\leq \varepsilon\leq 2.1058	$&$	2.3065	$\\\hline
78&$	1.3259=0.75\times \varepsilon_0(k)	$&$	3.0000	$&$	2.2992	$&$	2.0732\leq \varepsilon\leq 2.1203	$&$	2.3101	$\\\hline
79&$	1.3305=0.75\times \varepsilon_0(k)	$&$	3.0163	$&$	2.3034	$&$	2.0875\leq \varepsilon\leq 2.1346	$&$	2.3136	$\\\hline
80&$	1.3350=0.75\times \varepsilon_0(k)	$&$	3.0324	$&$	2.3076	$&$	2.1015\leq \varepsilon\leq 2.1489	$&$	2.3171	$\\\hline
81&$	1.3394=0.75\times \varepsilon_0(k)	$&$	3.0484	$&$	2.3117	$&$	2.0857\leq \varepsilon\leq 2.1155	$&$	2.3205	$\\\hline
82&$	1.3438=0.75\times \varepsilon_0(k)	$&$	3.0641	$&$	2.3158	$&$	2.0959\leq \varepsilon\leq 2.1293	$&$	2.3239	$\\\hline
83&$	1.3481=0.75\times \varepsilon_0(k)	$&$	3.0797	$&$	2.3198	$&$	2.1060\leq \varepsilon\leq 2.1430	$&$	2.3273	$\\\hline
84&$	1.3523=0.75\times \varepsilon_0(k)	$&$	3.0952	$&$	2.3238	$&$	2.1159\leq \varepsilon\leq 2.1565	$&$	2.3307	$\\\hline
85&$	1.3566=0.75\times \varepsilon_0(k)	$&$	3.1104	$&$	2.3277	$&$	2.1257\leq \varepsilon\leq 2.1699	$&$	2.3340	$\\\hline
86&$	1.3607=0.75\times \varepsilon_0(k)	$&$	3.1255	$&$	2.3316	$&$	2.1375\leq \varepsilon\leq 2.1832	$&$	2.3372	$\\\hline
87&$	1.3649=0.75\times \varepsilon_0(k)	$&$	3.1405	$&$	2.3355	$&$	2.1506\leq \varepsilon\leq 2.1964	$&$	2.3404	$\\\hline
88&$	1.3689=0.75\times \varepsilon_0(k)	$&$	3.1553	$&$	2.3393	$&$	2.1635\leq \varepsilon\leq 2.2095	$&$	2.3436	$\\\hline
89&$	1.3730=0.75\times \varepsilon_0(k)	$&$	3.1699	$&$	2.3430	$&$	2.1641\leq \varepsilon\leq 2.1763	$&$	2.3467	$\\\hline
90&$	1.3769=0.75\times \varepsilon_0(k)	$&$	3.1844	$&$	2.3468	$&$	2.1451\leq \varepsilon\leq 2.1734	$&$	2.3499	$\\\hline
91&$	1.3809=0.75\times \varepsilon_0(k)	$&$	3.1988	$&$	2.3504	$&$	2.1575\leq \varepsilon\leq 2.1827	$&$	2.3530	$\\\hline
92&$	1.3848=0.75\times \varepsilon_0(k)	$&$	3.2130	$&$	2.3541	$&$	2.1699\leq \varepsilon\leq 2.1918	$&$	2.3561	$\\\hline
93&$	1.3886=0.75\times \varepsilon_0(k)	$&$	3.2271	$&$	2.3577	$&$	2.1822\leq \varepsilon\leq 2.2009	$&$	2.3592	$\\\hline
94&$	1.3925=0.75\times \varepsilon_0(k)	$&$	3.2410	$&$	2.3613	$&$	2.1944\leq \varepsilon\leq 2.2098	$&$	2.3622	$\\\hline
95&$	1.3962=0.75\times \varepsilon_0(k)	$&$	3.2548	$&$	2.3648	$&$	2.1640\leq \varepsilon\leq 2.2065	$&$	2.3652	$\\\hline
96&$	1.3813=0.74\times \varepsilon_0(k)	$&$	3.2685	$&$	2.3496	$&$	2.1758\leq \varepsilon\leq 2.2184	$&$	2.3682	$\\\hline
97&$	1.3850=0.74\times \varepsilon_0(k)	$&$	3.2820	$&$	2.3531	$&$	2.1876\leq \varepsilon\leq 2.2303	$&$	2.3712	$\\\hline
98&$	1.3886=0.74\times \varepsilon_0(k)	$&$	3.2955	$&$	2.3565	$&$	2.1993\leq \varepsilon\leq 2.2421	$&$	2.3741	$\\\hline
99&$	1.3922=0.74\times \varepsilon_0(k)	$&$	3.3088	$&$	2.3598	$&$	2.2109\leq \varepsilon\leq 2.2534	$&$	2.3770	$\\\hline
100&$	1.3957=0.74\times \varepsilon_0(k)	$&$	3.3219	$&$	2.3631	$&$	2.2224\leq \varepsilon\leq 2.2619	$&$	2.3799	$\\\hline

\end{tabular}
\end{table}

\vfill\pagebreak

The values of $N_{\max}(k)$ for $2\leq k\leq 2000$ are given in Tables \ref{table17_1} and \ref{table17_2}. 

\renewcommand{\hsp}{\hspace{0.5cm}}
 \begin{table}[h]
\caption{\label{table17_1} The values of $N_{\max}(k)$ (Part I).}

\begin{tabular}{|c|l|}
\hline
Range of $k$ & $N_{\max}(k)$\\\hline
$	k=2	$&$	6983776800=2^{5}\times 3^{3}\times 5^2\times 7\times 11\times 13\times 17\times 19\hspace{1cm}	$\\\hline
$	k=3	$&$	43243200=2^{6}\times 3^{3}\times 5^2\times 7\times 11\times 13	$\\\hline
$	4\leq k\leq 5	$&$	259459200=2^{7}\times 3^{4}\times 5^2\times 7\times 11\times 13	$\\\hline
$	6\leq k\leq 7	$&$	518918400=2^{8}\times 3^{4}\times 5^2\times 7\times 11\times 13	$\\\hline
$	8\leq k\leq 9	$&$	79833600=2^{9}\times 3^{4}\times 5^2\times 7\times 11	$\\\hline
$	10\leq k\leq 12	$&$	479001600=2^{10}\times 3^{5}\times 5^2\times 7\times 11	$\\\hline
$	k=13	$&$	958003200=2^{11}\times 3^{5}\times 5^2\times 7\times 11	$\\\hline
$	14\leq k\leq 16	$&$	87091200=2^{11}\times 3^{5}\times 5^2\times 7	$\\\hline
$	17\leq k\leq 20	$&$	174182400=2^{12}\times 3^{5}\times 5^2\times 7	$\\\hline
$	21\leq k\leq 23	$&$	1045094400=2^{13}\times 3^{6}\times 5^2\times 7	$\\\hline
$	24\leq k\leq 28	$&$	2090188800=2^{14}\times 3^{6}\times 5^2\times 7	$\\\hline
$	29\leq k\leq 33	$&$	4180377600=2^{15}\times 3^{6}\times 5^2\times 7	$\\\hline
$	34\leq k\leq 36	$&$	8360755200=2^{16}\times 3^{6}\times 5^2\times 7	$\\\hline
$	37\leq k\leq 42	$&$	50164531200=2^{17}\times 3^{7}\times 5^2\times 7	$\\\hline
$	43\leq k\leq 48	$&$	100329062400=2^{18}\times 3^{7}\times 5^2\times 7	$\\\hline
$	49\leq k\leq 54	$&$	200658124800=2^{19}\times 3^{7}\times 5^2\times 7	$\\\hline
$	55\leq k\leq 58	$&$	401316249600=2^{20}\times 3^{7}\times 5^2\times 7	$\\\hline
$	59\leq k\leq 65	$&$	2407897497600=2^{21}\times 3^{8}\times 5^2\times 7	$\\\hline
$	66\leq k\leq 72	$&$	4815794995200=2^{22}\times 3^{8}\times 5^2\times 7	$\\\hline
$	73\leq k\leq 80	$&$	9631589990400=2^{23}\times 3^{8}\times 5^2\times 7	$\\\hline
$	81\leq k\leq 88	$&$	19263179980800=2^{24}\times 3^{8}\times 5^2\times 7	$\\\hline
$	89\leq k\leq 89	$&$	38526359961600=2^{25}\times 3^{8}\times 5^2\times 7	$\\\hline
$	90\leq k\leq 94	$&$	115579079884800=2^{25}\times 3^{9}\times 5^2\times 7	$\\\hline
$	95\leq k\leq 102	$&$	231158159769600=2^{26}\times 3^{9}\times 5^2\times 7	$\\\hline
$	103\leq k\leq 111	$&$	462316319539200=2^{27}\times 3^{9}\times 5^2\times 7	$\\\hline
$	112\leq k\leq 121	$&$	924632639078400=2^{28}\times 3^{9}\times 5^2\times 7	$\\\hline
$	122\leq k\leq 131	$&$	1849265278156800=2^{29}\times 3^{9}\times 5^2\times 7	$\\\hline
$	132\leq k\leq 137	$&$	3698530556313600=2^{30}\times 3^{9}\times 5^2\times 7	$\\\hline
$	138\leq k\leq 147	$&$	22191183337881600=2^{31}\times 3^{10}\times 5^2\times 7	$\\\hline
$	148\leq k\leq 158	$&$	44382366675763200=2^{32}\times 3^{10}\times 5^2\times 7	$\\\hline
$	159\leq k\leq 170	$&$	88764733351526400=2^{33}\times 3^{10}\times 5^2\times 7	$\\\hline
$	171\leq k\leq 181	$&$	2^{34}\times 3^{10}\times 5^2\times 7	$\\\hline
$	182\leq k\leq 194	$&$	2^{35}\times 3^{10}\times 5^2\times 7	$\\\hline
$\hsp	195\leq k\leq 199\hsp	$&$	2^{36}\times 3^{10}\times 5^2\times 7	$\\\hline

\end{tabular}
\end{table}

\vfill\pagebreak

\begin{table}[h]
\caption{\label{table17_2} The values of $N_{\max}(k)$ (Part II).}

\begin{tabular}{|c|l||c|l|}
\hline
Range of $ k  $ & $N_{\max}(k)$ & Range of $  k$ & $N_{\max}(k)$\\\hline
$	 k= 200	$&$	2^{36}\times 3^{11}\times 5^2\times 7	$&$	846\leq k\leq 873	$&$	2^{70}\times 3^{14}\times 5^2	$\\\hline
$	\hsp 201\leq k\leq 213\hsp	$&$ 	2^{37}\times 3^{11}\times 5^2\times 7\hsp	$&$	\hsp 874\leq k\leq 902	\hsp $&$	2^{71}\times 3^{14}\times 5^2	\hsp $\\\hline
$	214\leq k\leq 226	$&$	2^{38}\times 3^{11}\times 5^2\times 7	$&$	903\leq k\leq 930	$&$	2^{72}\times 3^{14}\times 5^2	$\\\hline
$	227\leq k\leq 239	$&$	2^{39}\times 3^{11}\times 5^2\times 7	$&$	931\leq k\leq 943	$&$	2^{73}\times 3^{14}\times 5^2	$\\\hline
$	240\leq k\leq 253	$&$	2^{39}\times 3^{11}\times 5^2	$&$	944\leq k\leq 965	$&$	2^{74}\times 3^{15}\times 5^2	$\\\hline
$	254\leq k\leq 268	$&$	2^{40}\times 3^{11}\times 5^2	$&$	966\leq k\leq 995	$&$	2^{75}\times 3^{15}\times 5^2	$\\\hline
$	269\leq k\leq 284	$&$	2^{41}\times 3^{11}\times 5^2	$&$	996\leq k\leq 1025	$&$	2^{76}\times 3^{15}\times 5^2	$\\\hline
$	285\leq k\leq 299	$&$	2^{42}\times 3^{11}\times 5^2	$&$	1026\leq k\leq 1056	$&$	2^{77}\times 3^{15}\times 5^2	$\\\hline
$	300\leq k\leq 316	$&$	2^{43}\times 3^{11}\times 5^2	$&$	1057\leq k\leq 1087	$&$	2^{78}\times 3^{15}\times 5^2	$\\\hline
$	317\leq k\leq 333	$&$	2^{44}\times 3^{11}\times 5^2	$&$	1088\leq k\leq 1119	$&$	2^{79}\times 3^{15}\times 5^2	$\\\hline
$	334\leq k\leq 343	$&$	2^{45}\times 3^{11}\times 5^2	$&$	1120\leq k\leq 1151	$&$	2^{80}\times 3^{15}\times 5^2	$\\\hline
$	344\leq k\leq 357	$&$	2^{46}\times 3^{12}\times 5^2	$&$	1152\leq k\leq 1184	$&$	2^{81}\times 3^{15}\times 5^2	$\\\hline
$	358\leq k\leq 375	$&$	2^{47}\times 3^{12}\times 5^2	$&$	1185\leq k\leq 1217	$&$	2^{82}\times 3^{15}\times 5^2	$\\\hline
$	376\leq k\leq 393	$&$	2^{48}\times 3^{12}\times 5^2	$&$	1218\leq k\leq 1251	$&$	2^{83}\times 3^{15}\times 5^2	$\\\hline
$	394\leq k\leq 412	$&$	2^{49}\times 3^{12}\times 5^2	$&$	1252\leq k\leq 1264	$&$	2^{84}\times 3^{15}\times 5^2	$\\\hline
$	413\leq k\leq 431	$&$	2^{50}\times 3^{12}\times 5^2	$&$	1265\leq k\leq 1296	$&$	2^{83}\times 3^{15}\times 5	$\\\hline
$	432\leq k\leq 451	$&$	2^{51}\times 3^{12}\times 5^2	$&$	1297\leq k\leq 1332	$&$	2^{84}\times 3^{15}\times 5	$\\\hline
$	452\leq k\leq 471	$&$	2^{52}\times 3^{12}\times 5^2	$&$	1333\leq k\leq 1368	$&$	2^{85}\times 3^{15}\times 5	$\\\hline
$	472\leq k\leq 486	$&$	2^{53}\times 3^{12}\times 5^2	$&$	1369\leq k\leq 1404	$&$	2^{86}\times 3^{15}\times 5	$\\\hline
$	487\leq k\leq 499	$&$	2^{54}\times 3^{13}\times 5^2	$&$	1405\leq k\leq 1441	$&$	2^{87}\times 3^{15}\times 5	$\\\hline
$	500\leq k\leq 520	$&$	2^{55}\times 3^{13}\times 5^2	$&$	1442\leq k\leq 1479	$&$	2^{88}\times 3^{15}\times 5	$\\\hline
$	521\leq k\leq 542	$&$	2^{56}\times 3^{13}\times 5^2	$&$	1480\leq k\leq 1493	$&$	2^{89}\times 3^{15}\times 5	$\\\hline
$	543\leq k\leq 564	$&$	2^{57}\times 3^{13}\times 5^2	$&$	1494\leq k\leq 1518	$&$	2^{90}\times 3^{16}\times 5	$\\\hline
$	565\leq k\leq 587	$&$	2^{58}\times 3^{13}\times 5^2	$&$	1519\leq k\leq 1556	$&$	2^{91}\times 3^{16}\times 5	$\\\hline
$	588\leq k\leq 610	$&$	2^{59}\times 3^{13}\times 5^2	$&$	1557\leq k\leq 1595	$&$	2^{92}\times 3^{16}\times 5	$\\\hline
$	611\leq k\leq 633	$&$	2^{60}\times 3^{13}\times 5^2	$&$	1596\leq k\leq 1634	$&$	2^{93}\times 3^{16}\times 5	$\\\hline
$	634\leq k\leq 657	$&$	2^{61}\times 3^{13}\times 5^2	$&$	1635\leq k\leq 1674	$&$	2^{94}\times 3^{16}\times 5	$\\\hline
$	658\leq k\leq 679	$&$	2^{62}\times 3^{13}\times 5^2	$&$	1675\leq k\leq 1714	$&$	2^{95}\times 3^{16}\times 5	$\\\hline
$	680\leq k\leq 688	$&$	2^{63}\times 3^{14}\times 5^2	$&$	1715\leq k\leq 1755	$&$	2^{96}\times 3^{16}\times 5	$\\\hline
$	689\leq k\leq 713	$&$	2^{64}\times 3^{14}\times 5^2	$&$	1756\leq k\leq 1797	$&$	2^{97}\times 3^{16}\times 5	$\\\hline
$	714\leq k\leq 739	$&$	2^{65}\times 3^{14}\times 5^2	$&$	1798\leq k\leq 1838	$&$	2^{98}\times 3^{16}\times 5	$\\\hline
$	740\leq k\leq 765	$&$	2^{66}\times 3^{14}\times 5^2	$&$	1839\leq k\leq 1881	$&$	2^{99}\times 3^{16}\times 5	$\\\hline
$	766\leq k\leq 791	$&$	2^{67}\times 3^{14}\times 5^2	$&$	1882\leq k\leq 1924	$&$	2^{100}\times 3^{16}\times 5	$\\\hline
$	792\leq k\leq 818	$&$	2^{68}\times 3^{14}\times 5^2	$&$	1925\leq k\leq 1967	$&$	2^{101}\times 3^{16}\times 5	$\\\hline
$	819\leq k\leq 845	$&$	2^{69}\times 3^{14}\times 5^2	$&$	1968\leq k\leq 2000	$&$	2^{102}\times 3^{16}\times 5	$\\\hline

\end{tabular}
\end{table}

\vfill\pagebreak

In Figure \ref{figure1}, we plot the values of $\lambda(k)$ for $2\leq k\leq 2000$.
 
\begin{figure}[ht]
\centering
\includegraphics[scale=0.6]{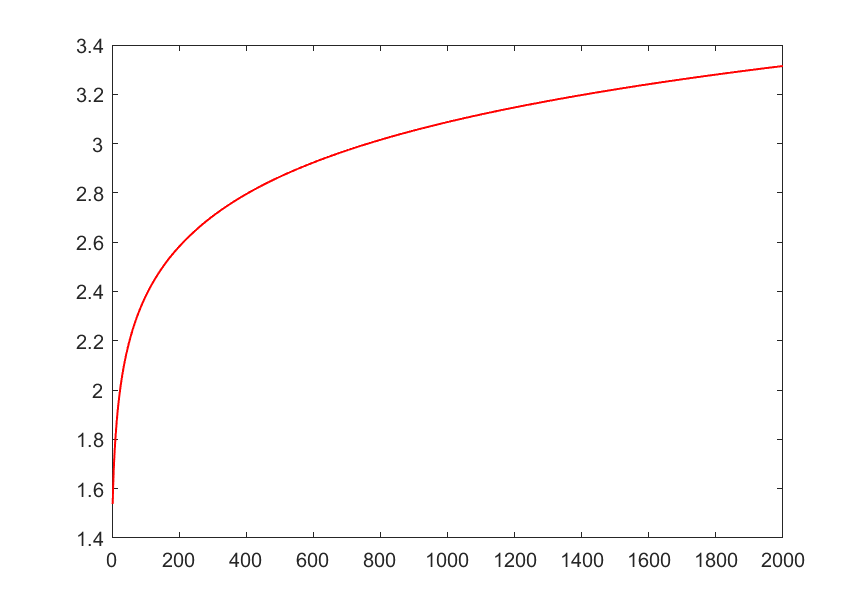}

\caption{The values of $\lambda(k)$ for $2\leq k\leq 2000$.}\label{figure1}
\end{figure}

In this work, we have established an efficient algorithm to solve the optimization problem
of finding the maximum value of the function
\[f_k(n)=\frac{\log d_k(n)\log\log n}{\log k\log n},\quad n\geq 3 \]for any $k\geq 2$. It gives us the smallest positive number $\lambda(k)$ such that
\[\log d_k(n)\leq \frac{\lambda(k)\log k\log n}{\log\log n}\hspace{1cm}\text{for all}\;\;n\geq 3.\]
The concept of superior $k$-highly composite numbers has played an important role. Their properties are worth explored. This will be considered in a future publication.

\bigskip

\end{document}